\documentclass[10pt]{amsart}

\usepackage{amsmath,amssymb,amsthm,mathtools,mathrsfs}
\usepackage{enumitem}
\usepackage{microtype}
\usepackage[colorlinks=true,linkcolor=blue,citecolor=blue,urlcolor=blue]{hyperref}
\usepackage[backend=biber,style=numeric,sorting=nyt,maxbibnames=10]{biblatex}
\allowdisplaybreaks
\setlist{itemsep=1pt,topsep=3pt,parsep=1pt}

\newtheorem{theorem}{Theorem}[section]
\newtheorem{proposition}[theorem]{Proposition}
\newtheorem{lemma}[theorem]{Lemma}
\newtheorem{corollary}[theorem]{Corollary}
\theoremstyle{definition}
\newtheorem{definition}[theorem]{Definition}
\theoremstyle{remark}
\newtheorem{remark}[theorem]{Remark}

\newcommand{\C}{\mathbb C}
\newcommand{\Cl}{\operatorname{Cl}}
\newcommand{\N}{\mathbb N}

\newcommand{\Ker}{\operatorname{Ker}}

\newcommand{\scrP}{\mathscr P}
\newcommand{\scrH}{\mathscr H}
\newcommand{\scrF}{\mathscr F}
\newcommand{\scrI}{\mathscr I}
\newcommand{\scrR}{\mathscr R}

\title[Stable $q$-Hermitian coordinate calculus]{Stable $q$-Hermitian coordinate calculus:\\Capelli--PBW transport, mixed polarization algebra, and localization obstructions}

\author{Baruch Schneider}
\address{Department of Mathematics, University of Ostrava, 70200 Ostrava, Czechia}
\email{baruch.schneider@osu.cz}

\author{Diana Schneiderov\'a}
\address{Department of Mathematics, University of Ostrava, 70200 Ostrava, Czechia}
\email{diana.schneiderova@osu.cz}

\author{Yifan Zhang}
\address{Department of Mathematics, University of Ostrava, 70200 Ostrava, Czechia; Department of Algebra, Charles University, 18675 Prague, Czechia; Department of Applied Mathematics, VSB--Technical University of Ostrava, 70800 Ostrava, Czechia}
\email{yifan.zhang@osu.cz}

\date{}

\begin{document}

\begin{abstract}
We extend a finite-support $q$-Hermitian radial calculus to
Clifford-valued coordinate polynomials.  For $N$ bosonic Hermitian vector
labels and finite fermionic support, the stable range $m\ge 2N$ admits a
canonical Gram--harmonic normal form by Howe separation.  A label--Capelli
inverse normalizes the Clifford contractions, while a finite
Wick--Chevalley conjugation lifts the divided-power scalar gauge to the full
Clifford PBW module.  Coupling the normalized contractions to contractible
fermionic seed complexes gives two conjugate polynomial-preserving
$q$-Hermitian coordinate families.  They are square-zero, anticommute within
each polarization, restrict exactly to the radial PBW calculus, have scalar
boundary trace $[2m-2n]_q$, and recover the classical Hermitian super Dirac
pair as $q\to1$.

For the mixed polarizations we introduce a relative triangular transport.
The Grassmann relations for the normalized Capelli contractions hold on the
full coordinate module and yield a labelwise factorization.  Hence all
higher-filtration mixed terms are finite subset products of explicit local
defects, and the mixed polarization algebra closes for arbitrary finite
label sets and finite fermionic support.  The classical complex structure is
common to both polarizations, whereas the two full transported
Clifford--Weyl products are distinct for $0<q<1$.

We also show that the fermionic Cartan denominator is forced in the natural
polynomial first-order Berezin--Weyl class and determine the minimal
factorwise Ore localization needed for the conjugate all-charge homotopies.
The $q$-dependent Cartan orbit is nonresonant for
$m\ge\max\{2N,n\}$.  Finally, fixed finite-order differential and finite
nonzero-shift realizations on the undeformed coordinate algebra are ruled
out.
\end{abstract}

\subjclass[2020]{Primary 30G35; Secondary 15A66, 33D05, 58A50, 58J10}
\keywords{Hermitian Clifford analysis, $q$-calculus, superspace, Capelli operators, Howe duality, PBW transport}

\maketitle

\section{Introduction}\label{sec:intro}

Throughout $0<q<1$ and
\[
 [r]_q:=\frac{1-q^r}{1-q},\qquad
 \gamma_q:=\frac{1+q}{2},\qquad
 c_r:=\gamma_q[r]_{q^2}=\frac12[2r]_q.
\]
For a commuting variable $x$ and ratio $t\ne1$ we write
\[
 \delta_{x,t}F:=\frac{F(x)-F(tx)}{(1-t)x}.
\]

The radial algebra of abstract vector variables separates the invariant
algebraic content of Clifford analysis from a particular coordinate
realization and dimension \cite{Sommen1997}.  After adjoining a complex
structure, the Hermitian radial algebra encodes the Witt variables, the two
Hermitian vector derivatives and their Euler--Cartan relations
\cite{DeSchepperGuzmanSommen2017}.  In the classical superspace
representation these objects become local differential operators in commuting
and anticommuting complex coordinates
\cite{DeSchepperGuzmanSommen2018Hermitian,GuzmanAdan2018}.

There are several inequivalent ways to introduce $q$-calculus into Clifford
analysis.  An axiomatic radial deformation of the Dirac operator leads to a
$q$-Laplace operator, quantum-algebra structure and $q$-Clifford--Hermite
polynomials \cite{CoulembierSommen2010}.  More recently, in the
commuting-coordinate setting, a coordinatewise Jackson replacement was used
to construct an explicit $q$-Dirac operator together with $q$-Euler and
$q$-Gamma operators, Fischer decomposition and a $q$-Cauchy--Kovalevskaya
extension theorem \cite{ZimmermannBernsteinSchneider2025}.  The radial problem
studied here is different: a coordinatewise Jackson replacement does not
preserve the intrinsic radial subalgebra \cite{BarseghyanBorySchneiderZhang2026},
and at integral superdimension the natural projection of independent
orthogonal right $q$-vector derivatives does not descend to the Hermitian
quotient \cite{BoryReyesSchneiderSchneiderovaZhang2026}.  Thus the classical
2018 projection construction cannot simply be $q$-deformed inside the undeformed
coordinate algebra.
The difficulty is therefore not merely to write down a Jackson-type Dirac
operator, but to reconcile Hermitian polarization, exact radial restriction,
and closure of the mixed $+/-$ algebra on general coordinates.  The stable
construction below addresses this compatibility problem.

We therefore take the following flat radial calculus as the reference point.  The radial Gram powers are
transported by divided-power weights, while the finite fermionic boundary is
corrected by an explicit contractible seed.  The resulting scalar restrictions
are
\begin{align}
 \mathfrak D_{k,q}^{\rm sc}P
 &=\gamma_q Z_k^\dagger\,\delta_{H_{kk},q^2}P
   +\sum_{j\ne k}Z_j^\dagger\,\delta_{H_{kj},q}P,
 \label{eq:radial-target-minus}\\
 (\mathfrak D_{k,q}^{\dagger})^{\rm sc}P
 &=\gamma_q Z_k\,\delta_{H_{kk},q^2}P
   +\sum_{i\ne k}Z_i\,\delta_{H_{ik},q}P.
 \label{eq:radial-target-plus}
\end{align}
The purpose of the present paper is to extend this flat radial calculus to arbitrary Clifford-valued bosonic coordinate polynomials,
while retaining finite fermionic support and the exact $q$-superdimension
trace.  In particular, the flat radial operators used in the coordinate
intertwining theorem are defined below from the same divided-power and
fermionic data; no unpublished radial construction is used as a logical
input.

There are two different notions of ``coordinate extension''.  A classical
2018-type realization would be a fixed local differential calculus on an
ordinary superspace.  We do not obtain such a realization.  Instead we solve
the stable coordinate problem in the \emph{localized transported setting}: the bosonic coordinate part
is a polynomial module, the fermionic part is the finite tensor of
charge modules after an explicit factorwise Cartan Ore localization, and the
$q$-operators are obtained by polynomial-preserving spectral and triangular
transports.  Sections~\ref{sec:obstructions}--\ref{sec:conclusion} explain why
this distinction is part of the algebraic construction.

Although the final operators are written as conjugates of ungauged flat seeds,
the existence of the required conjugating maps on the full coordinate algebra
is not formal.  The scalar divided-power gauge does not preserve Clifford PBW
words, the two Hermitian polarizations require different triangular frames,
and the fermionic boundary cannot be implemented polynomially in the natural
first-order Berezin--Weyl class.  The Capelli--PBW factorization, finite Wick lift, relative mixed-algebra factorization and localization-minimality results below establish the transported construction on the full coordinate module.

The construction has three ingredients.  First, for $m\ge2N$, Howe harmonic
duality gives the unique scalar normal form
\[
 P=\sum_\alpha H^\alpha h_\alpha,
 \qquad
 \Delta_{ij}h_\alpha=0,
\]
so the radial divided-power weights extend canonically from the Gram algebra
to all scalar coordinate polynomials.  Second, a label--Capelli inverse turns
the coordinate Clifford contractions into the abstract Grassmann
contractions on every standard radial exterior word.  A finite Wick map then
corrects the scalar gauge on the full Clifford PBW module.  Third, these
normalized contractions couple to the finite contractible fermionic seed
complexes by one finite triangular conjugation.

The mixed polarization problem is not governed by one common triangular
frame.  Nevertheless, it reduces to a finite relative transport
\[
 \mathbb R_q=\mathbb U_q^+(\mathbb U_q^-)^{-1}.
\]
A central result is that the normalized Capelli contractions obey
the full Grassmann relations on the entire coordinate module.  It follows
that the triangular factors commute between different bosonic labels and
$\mathbb R_q$ factorizes labelwise.  Consequently the complete mixed
anticommutator is a finite product of local conjugation defects.  No additional
higher-label curvature tensor or new degree-dependent coefficients are required.

The fermionic localization is forced already in the one-pair problem.  The
required trace--Cartan boundary value yields the rational factor
$(B_{\rm f}-i)^{-1}$ in the natural polynomial first-order Berezin--Weyl
class.  Requiring both conjugate contracting homotopies on every charge forces
three shift-orbit families and no mixed denominators.  In the natural range
$m\ge n$ all $q$-dependent orbit factors are nonresonant on the
standard polynomial oscillator; only the $q$-independent Weyl orbit can
resonate on isolated shifted shells.

The presentation below is self-contained for the coordinate construction once
the standard classical Hermitian radial algebra is fixed.  The earlier
finite-support radial theory motivates the target formulas, but the flat
radial $q$-operators needed here are reconstructed in
Section~\ref{sec:coordinate} from the displayed divided-power gauge and
fermionic seed rather than taken as an external input:
Section~\ref{sec:fermionic} writes the charge blocks, inverse homotopies,
minimal localization and finite-support occupancy defect explicitly.  Likewise,
the mixed section gives both the factorized formula and, in
Appendix~\ref{app:mixed-normal-order}, the rank-one right-normal calculation
from which the local defects can be read without an auxiliary unpublished
calculation.

The paper is organized as follows.  Section~\ref{sec:scalar} fixes the
coordinate algebra and radial PBW sector and proves the stable scalar normal form.
Section~\ref{sec:fermionic} constructs the localized fermionic seed and its
finite-support tensorization.  Section~\ref{sec:capelli} gives the
Capelli--PBW and Wick lifts, and Section~\ref{sec:coordinate} assembles the
stable coordinate operators.  Section~\ref{sec:mixed} proves the complete
mixed algebra by relative transport and global Capelli--Grassmann
factorization.  Section~\ref{sec:obstructions} records the two locality
obstructions and the structural scope of the result.

We summarize the principal conclusions.  Precise versions are proved below.

\begin{theorem}[Stable coordinate construction, informal form]
\label{thm:intro-A}
Let $N,n<\infty$ and $m\ge2N$.  On arbitrary Clifford-valued bosonic
coordinate polynomials tensored with the factorwise localized finite-support
fermionic charge modules there exist conjugate families
$\mathbb D_{k,q}^\pm$ such that
\[
 (\mathbb D_{k,q}^\pm)^2=0,
 \qquad
 \{\mathbb D_{i,q}^\pm,\mathbb D_{j,q}^\pm\}=0\quad(i\ne j).
\]
They restrict exactly to the finite-support radial PBW calculus defined in
Section~\ref{sec:coordinate}, have scalar boundary trace $[2m-2n]_q$, are $U(m)$- and label-covariant, and converge to the
classical Hermitian super Dirac pair at $q=1$.
\end{theorem}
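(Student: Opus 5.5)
The plan is to realize $\mathbb D_{k,q}^\pm$ as conjugates of ungauged flat seed operators by one explicit invertible, polynomial-preserving transport $\mathbb T_q^\pm$. Every algebraic identity in the statement, namely square-zero and anticommutation within a polarization, is then inherited from the seeds, so the real work is showing that the transport exists on the full coordinate module. The construction proceeds in three layers, following the outline of the introduction.

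\emph{Scalar layer.} For $m\ge 2N$, Howe separation for the pair $(U(m),\mathfrak{gl}(N|N))$-type Gram action gives the unique normal form $P=\sum_\alpha H^\alpha h_\alpha$ with $\Delta_{ij}h_\alpha=0$. The stable range guarantees that the Gram monomials $H^\alpha$ are algebraically independent over the harmonics, so this decomposition is direct. Define the divided-power gauge $G_q$ by acting diagonally on $H^\alpha$ with the radial weights, built from products of $c_r$ and $[r]_q$, and as the identity on $h_\alpha$. By uniqueness of the normal form, $G_q$ is well defined, invertible, $U(m)$-covariant and label-covariant. A direct computation on $H^\alpha h_\alpha$ then shows that $G_q\,\partial\,G_q^{-1}$ reproduces the Jackson-type formulas \eqref{eq:radial-target-minus}--\eqref{eq:radial-target-plus} on scalars.

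\emph{Clifford layer.} Next extend from scalar to Clifford-valued polynomials. Write each element in the Clifford PBW basis of radial exterior words in the labels $Z_k,Z_k^\dagger$, with scalar coefficients. Apply the label--Capelli inverse: the Capelli determinant for the label $\mathfrak{gl}_N$ action is invertible on each isotypic piece in the stable range, and it converts the coordinate Clifford contractions into abstract Grassmann contractions on every standard word. Then build the finite Wick--Chevalley map $W_q$. Since $G_q$ does not commute with Clifford multiplication, set $W_q=\sum(\text{normal-ordered contraction terms})$, which is a finite sum because fermionic support and exterior degree are bounded. This lifts $G_q$ to a module map that intertwines the normalized contractions with the Grassmann ones.

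\emph{Fermionic coupling.} Couple to the finite-support fermionic charge modules. These are localized factorwise at the Cartan denominators $(B_{\rm f}-i)$, and on them one has the contractible seed complexes with explicit homotopies. The coupling uses a single triangular conjugation $\mathbb U_q^\pm$, upper or lower according to polarization, which is unipotent relative to the total fermionic filtration and hence invertible by a finite Neumann series. Set $\mathbb D^\pm_{k,q}=\mathbb T_q^\pm\,\mathbb D^{\pm,{\rm flat}}_{k}\,(\mathbb T_q^\pm)^{-1}$, where $\mathbb T^\pm_q=\mathbb U_q^\pm W_q G_q$. The remaining claims are checked as follows.
\begin{itemize}
\item \emph{Square-zero and anticommutation.} These follow because the flat seeds satisfy them: the Grassmann contractions anticommute, and the seed differentials are square-zero.
\item \emph{Radial restriction.} $\mathbb T_q^\pm$ preserves the radial PBW sector and reduces there to the divided-power gauge, so the restriction is exact.
\item \emph{Boundary trace.} The scalar boundary trace is the seed's supertrace, $[2m]_q$ from the bosonic part minus the fermionic correction, which yields $[2m-2n]_q$.
\item \emph{Covariance.} $U(m)$- and label-covariance hold because every ingredient (Howe decomposition, Capelli element, Wick contractions) is built from invariants.
\item \emph{Limit $q\to1$.} All weights tend to $1$ and $\mathbb U_q^\pm\to\mathrm{id}$, which recovers the classical Hermitian super Dirac pair.
\end{itemize}

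\emph{Main obstacle.} I expect the hardest step to be the Clifford layer: showing that the Wick lift of the scalar gauge, composed with the Capelli normalization, really intertwines the contractions on the whole PBW module and not only on radial words. The issue is that the gauge weights depend on the Gram degree, which changes under Clifford multiplication. My first attempt would be induction on exterior word length. At each step I would verify the intertwining relation on $Z_k\cdot(H^\alpha h_\alpha)$ by expanding $Z_k H^\alpha$ via Pieri-type rules in the Howe decomposition. The goal would be to show that the resulting discrepancy is exactly absorbed by one additional Wick contraction term, which would also show that the sum defining $W_q$ terminates.
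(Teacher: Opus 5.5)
The overall architecture matches the paper: Howe normal form and divided-power gauge, Capelli normalization plus a Wick lift, a triangular fermionic coupling, and $\mathbb D_{k,q}^\pm$ defined by conjugating ungauged flat seeds so that square-zero and anticommutation are inherited. However, the steps that carry the real content are either left as black boxes or rest on a mechanism that does not work.

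\textbf{The trace.} Nothing in your scalar or Clifford layers can produce $[2m]_q$. The weights $g_\alpha$ are built from $c_r=\frac12[2r]_q$ and $[r]_q$ and do not involve $m$. On a linear input (Gram degree $0$, one exterior factor, nothing to Wick-contract) both $G_q$ and $W_q$ act as the identity. The bosonic part of an ungauged seed is the undeformed $\partial_{Z_k}$, so its boundary is classical. Conjugation does not preserve boundary values, so ``the seed's supertrace'' is not the relevant quantity.

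In the paper the bosonic deformation is carried by the triangular factor itself. There $\mathbb U_q^+=\exp\bigl(\sum_i\mathfrak n_i^+\widehat\otimes\mathbb N_i^+\bigr)$ with $\mathbb N^+=-\mathbb h^+\mathbb M^+$. The defect $\mathbb M^+$ contains the scalar $\delta_m(q)=(2m-[2m]_q)/4$ plus the branch coefficients $\chi_S^{(|S|)}$ forced by radial covariance (Proposition~\ref{prop:branch-rigidity}); see \eqref{eq:charge-zero-linear-defect}.

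The fermionic seed must also be the $q$- and $m$-dependent one with $s=Q_{m|n}[2n]_q/n$. For $s\ne2$ no seed in the natural polynomial first-order class has the required boundary (Theorem~\ref{thm:seed-no-go}). Moreover, the contracting homotopies on all charges require inverting the shifts of $\ell_\pm(B)=s\mp2iB$, not only those of $B_{\rm f}-i$. Since you never say what $\mathbb U_q^\pm$ is, none of this is available to you.

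\textbf{Radial restriction.} For the same reason this argument fails as stated. $\mathbb T_q^\pm$ does not reduce to the divided-power gauge on radial words, because the triangular factor moves $Z_{\rm b}e_{\mathbf 0}$ into the $\vartheta$-slots. What is needed is factorwise intertwining:
\begin{itemize}
\item $\mathbb U_q^\pm\iota_{m,n,N}=\iota_{m,n,N}\mathbb U_{q,\rm rad}^\pm$, which uses $\mathfrak n_i^\pm=\mathfrak c_i^\pm$ on all standard radial words, mixed ones such as $Z_jZ_k^\dagger F$ included;
\item $\mathbb G^{\rm PBW}_{m,N}\iota_{m,n,N}=\iota_{m,n,N}\mathcal G_N$;
\item the fact that the radial calculus of Section~\ref{sec:coordinate} is defined by the same transported formula.
\end{itemize}

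\textbf{The Clifford layer.} This is not yet a construction on the full module, and the obstacle you single out is not the one that has to be overcome. The words $Z_IZ_J^\dagger$ with scalar coefficients do not span $\scrP^{\rm Cl}_{m,N}$, which is free of rank $2^{2m}$ over $\scrP^{\rm sc}_{m,N}$; the constant $f_1$ is not in their span. So expanding in ``the Clifford PBW basis of radial words'' does not define the lifted gauge on arbitrary Clifford-valued polynomials.

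The paper applies the scalar normal-form gauge coefficientwise in the exterior-symbol basis on all $2m$ Witt symbols. It then conjugates by the $q$-independent finite Wick map $\mathbb W_{m,N}=\sigma^{-1}\exp(\mathbb K_{m,N})\sigma$, with $\mathbb K_{m,N}=\frac12\sum_{i,j}L_{H_{ij}}\mathfrak n_j^-\mathfrak n_i^+$. The result $\mathbb W_{m,N}\mathbb G^{\rm nf,\Lambda}_{m,N}\mathbb W_{m,N}^{-1}$ is automatically a polynomial automorphism of the whole module. The only thing to verify is its value $g_\alpha$ on radial words, which is the Wick lemma.

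No intertwining of contractions off the radial sector is required, because flatness on the full module comes from conjugation. The abstract contractions \eqref{eq:abstract-c} are only defined on standard words $Z_IB$ anyway. So the Pieri-rule induction you propose targets a non-problem.

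The step that genuinely needs proof is the Capelli--PBW identity $\mathfrak s_i=\sum_\ell(m\delta_{i\ell}+\mathcal E^z_{\ell i})\mathfrak c_\ell^+$ on radial words. It follows by checking that both sides satisfy the same graded Leibniz rule against left multiplication by $Z_j$ and both annihilate $\mathscr B^+$. This is combined with invertibility of $\mathcal C_z$ on each shell, which comes from the Pieri eigenvalues $m+\lambda_r-r+1\ge m-N+1>0$.
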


\begin{theorem}[Complete mixed algebra, informal form]
\label{thm:intro-B}
In the same range, the mixed matrix
$\{\mathbb D_{i,q}^+,\mathbb D_{j,q}^-\}$ is completely determined by the
ungauged mixed seed matrix and finitely many labelwise relative defects.  The
relative transport factorizes into commuting local factors, so every
higher-filtration term is a finite subset product of the same local data.
\end{theorem}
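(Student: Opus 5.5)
The plan is to write both polarizations as conjugates of the ungauged flat seeds,
\[
\mathbb D_{k,q}^{+}=\mathbb U_q^{+}\,\mathbb D_{k}^{+,\rm seed}\,(\mathbb U_q^{+})^{-1},
\qquad
\mathbb D_{k,q}^{-}=\mathbb U_q^{-}\,\mathbb D_{k}^{-,\rm seed}\,(\mathbb U_q^{-})^{-1}.
\]
Here $\mathbb U_q^\pm$ is the composite of the Howe--Gram divided-power gauge, the Wick--Chevalley lift and the triangular seed coupling attached to the respective polarization. Conjugating the mixed anticommutator by $(\mathbb U_q^-)^{-1}$ then gives
\[
(\mathbb U_q^-)^{-1}\{\mathbb D_{i,q}^+,\mathbb D_{j,q}^-\}\,\mathbb U_q^-
=\{\mathbb R_q^{-1}\mathbb D_i^{+,\rm seed}\mathbb R_q,\;\mathbb D_j^{-,\rm seed}\},
\qquad \mathbb R_q=(\mathbb U_q^-)^{-1}\mathbb U_q^{+},
\]
or the analogous expression with the introduction's ordering $\mathbb R_q=\mathbb U_q^+(\mathbb U_q^-)^{-1}$; I will fix whichever convention makes the factors act on seed words. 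The whole problem is thus reduced to computing the relative transport on the seed module. The first step is to check that $\mathbb R_q$ is well defined and finite. Both triangular frames are unipotent with respect to the radial-degree and fermionic-occupancy filtration. On the finite-support module this filtration has finite length on each homogeneous piece, so $\mathbb R_q$ is a finite sum of filtration-raising terms.

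The second step is the structural core. I will use the result that the normalized Capelli contractions satisfy the full Grassmann relations on the entire coordinate module, namely anticommutation and square zero across all labels. From this I will deduce that the triangular factor of $\mathbb U_q^\pm$ attached to label $k$ is built only from label-$k$ contractions and label-$k$ fermionic seed operators. These are even combinations, so factors for different labels commute. Hence
\[
\mathbb R_q=\prod_{k=1}^N \mathbb R_{q,k},
\]
with commuting local factors $\mathbb R_{q,k}$, and each $\mathbb R_{q,k}$ has the form $1+\rho_k$ with $\rho_k$ filtration-raising and $\rho_k^2$ controlled by the rank-one computation.

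The third step substitutes this factorization into the conjugated seed $\mathbb R_q^{-1}\mathbb D_i^{+,\rm seed}\mathbb R_q$. Factors $\mathbb R_{q,k}$ that commute or anticommute appropriately with $\mathbb D_i^{+,\rm seed}$ pass through it. Expanding the product over $k$ then writes the conjugate as $\mathbb D_i^{+,\rm seed}$ plus a sum over nonempty subsets $S\subseteq\{1,\dots,N\}$ of ordered products of the local defects $[\mathbb D_i^{+,\rm seed},\rho_k]$ for $k\in S$, each dressed by the inverse factors. Anticommuting with $\mathbb D_j^{-,\rm seed}$ gives the ungauged mixed seed matrix plus finite subset products of labelwise defects. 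The local defects themselves are read off from the rank-one right-normal-ordered calculation in Appendix~\ref{app:mixed-normal-order}. Conjugating back by $\mathbb U_q^-$ yields the statement.

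I expect the main obstacle to be the second step: proving that the Grassmann relations hold globally and not merely on standard radial exterior words. This is what forces the commutation of labelwise factors, since the Wick lift a priori mixes labels through scalar gauge weights depending on all Gram entries $H_{ij}$. I would first try to show that the Wick map intertwines the coordinate contractions with the abstract ones on the full PBW basis. That reduces to checking the relations on the harmonic normal form $\sum_\alpha H^\alpha h_\alpha$, where Howe separation for $m\ge2N$ makes the harmonic part inert. The remaining relations on the Gram part are then the abstract Grassmann identities.
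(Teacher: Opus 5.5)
\textbf{Verdict: the architecture matches the paper's, but the crux step is unproved and your proposed route to it would fail.}

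Your outline follows the paper's argument in four steps. The common outer gauge cancels. The mixed bracket reduces to the relative transport; with your convention $\mathbb R_q=(\mathbb U_q^-)^{-1}\mathbb U_q^+$ the conjugated plus seed is $\mathbb R_q\,\mathbb D_i^{+,\rm seed}\mathbb R_q^{-1}$ rather than the inverse placement you wrote, which is cosmetic. The global Grassmann relations give commuting factors $\mathbb R_i=(I+\mathbb H_i^+)(I-\mathbb H_i^-)$. Finally $\operatorname{Ad}_{\mathbb R_q}=\prod_i(I+\mathscr K_i)$ is expanded over subsets.

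The gap is the step you yourself flag: proving the Grassmann relations on the whole module.
\begin{itemize}
\item The identification $\mathfrak n_i^\pm=\mathfrak c_i^\pm$ via Capelli--PBW and the Wick map is only available on the represented radial sector $\scrR^{\rm PBW}_{m,N}$. That is a small subspace of $\scrP^{\rm Cl}_{m,N}$: a general element has arbitrary Clifford coefficients (e.g.\ $f_1f_2$) and arbitrary harmonic factors. Checking an operator identity on a subspace gives nothing off it.
\item The harmonic part is not inert: $z_{11}$ is mixed-harmonic, yet $\mathfrak s_1(z_{11}f_1)=1$.
\item Your diagnosis of where labels mix is off. The Wick lift and divided-power weights sit in the common outer gauge $\mathbb G^{\rm PBW}_{m,N}$. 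That gauge cancels from the relative transport and never has to be commuted past $\mathbb H_k^\pm$.
\item The mixing that actually threatens $\{\mathfrak n_i^+,\mathfrak n_j^+\}=0$ comes from the inverse Capelli matrix in $\mathfrak n_i^+=\sum_\ell(\mathcal C_z^{-1})_{i\ell}\mathfrak s_\ell$. Its entries commute neither with one another nor with the $\mathfrak s_\ell$.
\end{itemize}

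The missing idea is a direct Capelli-matrix argument that works on the whole module.
\begin{itemize}
\item Write $\mathfrak s_i=\sum_a(\mathcal C_z)_{ia}\mathfrak n_a^+$ and use $\{\mathfrak s_i,\mathfrak s_j\}=0$.
\item Use the covariance $[\mathcal E^z_{ab},\mathfrak n_i^+]=-\delta_{ia}\mathfrak n_b^+$, hence $[\mathfrak n_a^+,(\mathcal C_z)_{jb}]=\delta_{ab}\mathfrak n_j^+$, together with the $\mathfrak{gl}_N$ commutators of the entries of $\mathcal C_z$.
\item Expanding gives $\sum_a(\mathcal C_z)_{ia}\sum_b(\mathcal C_z+I_N)_{jb}F_{ab}=0$ with $F_{ab}=\{\mathfrak n_a^+,\mathfrak n_b^+\}$.
\item The Pieri eigenvalue bound makes both $\mathcal C_z$ and $\mathcal C_z+I_N$ invertible on every shell, so $F=0$.
\item The cross relation is immediate: each Capelli matrix commutes with the opposite raw contractions, and $\{\mathfrak s_a^+,\mathfrak s_b^-\}=0$.
\end{itemize}

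Two smaller points. First, evenness of $\mathbb H_k^\pm$ does not by itself give $[\mathbb H_i^\sigma,\mathbb H_j^\tau]=0$. You need the bosonic Grassmann sign to cancel the graded fermionic sign. You also need the Euler-shell coefficients of $\mathbb N_i^\pm$ to commute with $\mathfrak n_j^\mp$ for $j\neq i$, which again follows from label covariance.

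Second, the subset terms are compositions of the superoperators $\mathscr K_k=\operatorname{Ad}_{\mathbb R_k}-I$, and their order is irrelevant because the $\operatorname{Ad}_{\mathbb R_k}$ commute. Each is given explicitly by the four-slot conjugation $(I+\mathscr K_k)(X)=(I+\mathbb H_k^+)(I-\mathbb H_k^-)X(I+\mathbb H_k^-)(I-\mathbb H_k^+)$. They are not operator products of commutators $[\mathbb D,\rho_k]$ read off from the one-label, one-pair appendix. Off-diagonal labels genuinely contribute: for example $\boldsymbol\Lambda_{kj}^{+|-}H_{kj}=1$ for $k\neq j$. So factors for other labels cannot be assumed to pass through the seed.
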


\begin{theorem}[Localization and product structure, informal form]
\label{thm:intro-C}
The one-pair boundary denominator is forced in the full charge-preserving
polynomial first-order seed class.  The factorwise shift-stable Cartan
localization generated by the three Weyl/trace orbit families is sufficient
and minimal for both conjugate all-charge homotopies.  The classical complex
structure is common to both polarizations, but the two full transported
Clifford--Weyl products are distinct for $0<q<1$.
\end{theorem}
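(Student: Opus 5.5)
The plan splits Theorem~\ref{thm:intro-C} into its three assertions: the forced denominator, sufficiency and minimality of the localization, and the distinctness of the products. Each reduces to the finite fermionic charge blocks of Section~\ref{sec:fermionic} or to a single explicit vector.

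For the first assertion, fix one fermionic pair. The seed runs over the charge-preserving polynomial first-order Berezin--Weyl class. A general element of this class is a finite sum
\[
 \sum_r a_r(B_{\rm f})\,\theta^{\epsilon_r}\partial_\theta^{\epsilon'_r},
\]
with polynomial coefficients in the fermionic Cartan element $B_{\rm f}$. I would impose the two requirements at once:
\begin{enumerate}
 \item the square-zero/contracting identity;
 \item the trace--Cartan boundary value $[2m-2n]_q$.
\end{enumerate}
On each charge shell this gives a scalar functional equation of the form
\[
 (B_{\rm f}-i)\,a(B_{\rm f})=\text{const}\ne0,
\]
where the constant is read off from the boundary trace. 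Since $B_{\rm f}-i$ has positive degree and the right side is a nonzero constant, no polynomial $a$ solves it. Hence $(B_{\rm f}-i)^{-1}$ is forced.

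For the second assertion (sufficiency and minimality), I would write the two conjugate all-charge homotopies explicitly on the charge modules. Their inverses involve three families of factors:
\begin{itemize}
 \item Weyl shifts $B_{\rm f}-i-s$;
 \item $q$-trace shifts $[B_{\rm f}+s]_q$-type factors;
 \item their conjugates.
\end{itemize}
Sufficiency follows by checking that the multiplicative set generated by these shift orbits is Ore. It is commutative in $B_{\rm f}$ and stable under the adjoint shift action of $\theta,\partial_\theta$. Inverting it therefore makes both homotopies well defined.

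Minimality is where I expect the main difficulty. One must show that every orbit generator genuinely appears as the denominator of some matrix entry of some homotopy. One must also show that no mixed product of factors from different families is ever required. I would argue this charge by charge. Apply the homotopy identity $hd+dh=1$ to a single charge shell, and compare leading terms to isolate each denominator. Then use that the factors are pairwise coprime polynomials in $B_{\rm f}$, so no cancellation can occur. Tensoring over finitely many fermionic labels is factorwise, which gives the factorwise statement.

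For the third assertion, the complex structure is built from the classical Euler/Cartan data. The transports $\mathbb U_q^\pm$ are diagonal on Howe components (Section~\ref{sec:scalar}), so they commute with it, and it is therefore common to both polarizations. For distinctness of the products, I would exhibit one explicit pair of low-degree elements, for example $Z_1$ and a Clifford generator with one label, on which the two transported products differ. The relative transport $\mathbb R_q$ acts by the local defect computed in Appendix~\ref{app:mixed-normal-order}. That defect contains a factor like $\gamma_q-1$ or $[2]_q-2$, which is nonzero exactly when $0<q<1$.
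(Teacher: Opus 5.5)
Your three-part split matches the paper, but the fermionic algebra you propose is not the one that occurs, and the first two steps fail as written.

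\textbf{The forced denominator.} A charge-preserving term of first order in $\partial_\vartheta$ cannot have a coefficient in $\C[B_{\rm f}]$ alone. Since $\partial_\vartheta$ raises the charge, by PBW its Weyl coefficient on the empty opposite-occupancy sector lies in the degree $-1$ part $b\,\C[B_{\rm f}]$, so it has the form $bF(B_{\rm f})$. The condition to impose is the one-pair boundary value $-s+2iB_{\rm f}$, not $[2m-2n]_q$. By \eqref{eq:weyl-shifts} it reads $(B+i)F(B+2i)=-s+2iB$, i.e.\ $(B-i)F(B)=4-s+2iB$. The right side is linear, not a nonzero constant, so a degree argument is unavailable. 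Read with $a$ the seed coefficient itself, your equation would also exclude the classical polynomial seed $F\equiv 2i$ at $s=2$. The actual obstruction is that $B-i$ divides $4-s+2iB$ only if $s=2$; equivalently, $(B-i)(F-2i)=2-s$. So the denominator is forced by a deformed scalar part $s\neq2$ sitting next to the undeformed Cartan coefficient $2i$.

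\textbf{The localization.} The three orbit families are $\sigma^kd=B-i+2ik$ and $\sigma^k\ell_\pm=s\mp2iB\pm4k$. All are linear in $B_{\rm f}$, with $q$ entering only through $s$. No $[B_{\rm f}+\cdot\,]_q$-type factors occur, and such factors are not even in $\C[B_{\rm f}]$. The shifts come from $a,b$ via \eqref{eq:weyl-shifts}, not from $\vartheta,\partial_\vartheta$, which commute with $B_{\rm f}$. Sufficiency rests on the explicit factorizations $\pi_c^+=L_c^+/(4\mathfrak a_c)$ and $\pi_c^-=L_c^-/(4\mathfrak b_c)$, with $\mathfrak a_c=\alpha_c$ and $\mathfrak b_c=\beta_c$ translates of $d$ or $1$. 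Minimality in the stated $\sigma$-stable sense needs only one forced unit per orbit: the charge-one plus second arrow with coefficient $\frac1{2i}\beta_1=\frac1{2i}\sigma d$, and the charge-zero first arrows $\pi_0^\pm=\ell_\pm/4$. Shift stability then yields the whole orbits. No leading-term or coprimality argument is needed, and coprimality in fact fails at special $s$, where orbits coalesce.

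\textbf{The complex structure.} The third part has two further gaps. First, $\mathbb U_q^\pm=\exp(\sum_i\mathfrak n_i^\pm\widehat\otimes\mathbb N_i^\pm)$ is not diagonal on Howe components. It is unipotent and triangular, lowering bosonic degree and raising fermionic occupancy, and the Wick factor inside $\mathbb G^{\rm PBW}_{m,N}$ is not diagonal either. Commutation with $J$ is a weight count instead:
\begin{itemize}
\item Gram factors are $J$-neutral;
\item $\mathbb K_{m,N}$ removes one plus and one minus Witt symbol while inserting a neutral $H_{ij}$;
\item $\mathfrak n_i^\pm\widehat\otimes\mathbb N_i^\pm$ trades a bosonic Hermitian vector for a fermionic factor of the same weight.
\end{itemize}

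\textbf{Distinct products.} A nonzero defect coefficient shows only $\mathbb S_q=\mathbb R_q^{-1}\neq I$. A nontrivial bijection may still be multiplicative, so you need an actual failure of $\mathbb S_q(FG)=\mathbb S_q(F)\mathbb S_q(G)$. Appendix~\ref{app:mixed-normal-order} computes $\{\mathbb A^+,\boldsymbol\Omega_q^{-|+}\}$ and does not supply one.

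The paper uses \eqref{eq:charge-zero-linear-defect}. The minus contraction kills the scalar remainder, so $\mathbb S_q(Z_{\rm b}e_{\mathbf0})=Z_{\rm b}e_{\mathbf0}-\frac{\delta_m(q)}{n}\sum_re_{\vartheta,r}(\pi^+_{0;r})^{-1}$, while $\mathbb S_q$ fixes $B_{{\rm f},r}$. The relation $[Z_{\rm b},B_{{\rm f},r}]=0$ is therefore sent to $\frac{2i\delta_m(q)}{n}e_{\vartheta,r}(\pi^+_{0;r})^{-1}\neq0$, with $\delta_m(q)=(2m-[2m]_q)/4\neq0$.

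A pair of your type can also work. $\mathbb S_q$ fixes a constant Witt symbol $f_a$, while $\mathfrak n^+(Z_1f_a)=\frac{m-1}{m}f_a$ but $\mathfrak n^+(Z_1)f_a=f_a$. Hence $\mathbb S_q(Z_1f_a)$ and $\mathbb S_q(Z_1)f_a$ carry correction terms with different coefficients. That computation, not a factor such as $[2]_q-2$, is the substance of the step.
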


The stable assumption $m\ge2N$ is essential to the present method.  Below it,
several-variable Fischer separation acquires determinantal and Dirac-complex
syzygies \cite{SabadiniStruppaSommenVanLancker2002,LavickaSoucek2017}; that
exceptional problem is not absorbed into a nonunique gauge and is left for a
separate treatment.

\section{Coordinate setup, radial calculus, and stable scalar transport}\label{sec:scalar}

\subsection{Classical Hermitian coordinates and conventions}
Fix $m,N\ge1$.  For $1\le k\le N$ let
$z_k=(z_{k1},\ldots,z_{km})$ and
$\bar z_k=(\bar z_{k1},\ldots,\bar z_{km})$ be independent complex
coordinate tuples.  Let $f_a,f_a^\dagger$ be a Witt basis,
\begin{equation}\label{eq:witt}
 \{f_a,f_b\}=\{f_a^\dagger,f_b^\dagger\}=0,
 \qquad
 \{f_a,f_b^\dagger\}=\delta_{ab}.
\end{equation}
Put
\begin{equation}\label{eq:coordinate-vectors}
 Z_k:=\sum_{a=1}^m z_{ka}f_a,
 \qquad
 Z_k^\dagger:=\sum_{a=1}^m\bar z_{ka}f_a^\dagger,
 \qquad
 H_{ij}:=\sum_{a=1}^m z_{ia}\bar z_{ja},
\end{equation}
and
\begin{equation}\label{eq:classical-dirac}
 \partial_{Z_k}:=\sum_{a=1}^m f_a^\dagger\partial_{z_{ka}},
 \qquad
 \partial_{Z_k^\dagger}:=\sum_{a=1}^m f_a\partial_{\bar z_{ka}}.
\end{equation}
The same-polarization squares and anticommutators vanish.

Throughout, $\ddagger$ denotes the \emph{order-preserving} antilinear
Hermitian involution on the represented coordinate--operator algebra.  It
exchanges holomorphic and antiholomorphic coordinate/operator data and later
reverses the fermionic charge.  Thus
\begin{equation}\label{eq:ddagger-order}
 (FG)^\ddagger=F^\ddagger G^\ddagger.
\end{equation}
This is not the usual order-reversing Clifford anti-involution.  The dagger on
$f_a^\dagger$ denotes the conjugate Witt generator; it does not mean that
products are reversed by $\ddagger$.  We make this convention explicit
because the two transported products in Section~\ref{sec:mixed} are exchanged
by this order-preserving conjugation.

Let
\[
 \scrP^{\rm sc}_{m,N}:=
 \C[z_{ia},\bar z_{ja}:1\le i,j\le N,\ 1\le a\le m]
\]
and let
\[
 \scrP^{\rm Cl}_{m,N}:=\scrP^{\rm sc}_{m,N}\otimes\Cl_{2m}^{\C}
\]
denote the corresponding Clifford-valued polynomial module.  We only use the
Clifford algebra through the Witt relations \eqref{eq:witt}; no choice of a
matrix representation is required.

\subsection{The represented radial PBW sector}
The coordinate theorem is required to agree with the radial calculus defined
below on its natural PBW sector.  It is useful to specify that sector
before introducing the deformation.  For ordered subsets
$I=(i_1<\cdots<i_r)$ and $J=(j_1<\cdots<j_s)$ write
\[
 Z_I:=Z_{i_1}\cdots Z_{i_r},\qquad
 Z_J^\dagger:=Z_{j_1}^\dagger\cdots Z_{j_s}^\dagger.
\]
Because $Z_i^2=(Z_j^\dagger)^2=0$, repetitions within $I$ or $J$ do not occur.
Define the represented radial PBW subspace
\begin{equation}\label{eq:represented-radial-pbw}
 \scrR^{\rm PBW}_{m,N}
 :=\operatorname{span}_{\C}
 \{Z_IZ_J^\dagger H^\alpha:\ I,J\text{ ordered},\
   \alpha\in\N^{N\times N}\}
 \subset\scrP^{\rm Cl}_{m,N}.
\end{equation}
It is the coordinate image of the standard Hermitian radial PBW module.
Faithfulness on this sector is a standard consequence of the radial-algebra
representation theorem: after complexification, a radial algebra generated
by finitely many vector variables has a faithful Clifford-polynomial
realization once the ambient vector dimension dominates the number of
abstract generators \cite[Theorem~2.1]{Sommen1997}.  In the Hermitian
realization one passes to the underlying $2m$-dimensional real Clifford space
with the $2N$ generators belonging to the $N$ complex-structure pairs; see
\cite{DeSchepperGuzmanSommen2017,DeSchepperGuzmanSommen2018Hermitian}.
Thus $m\ge N$ already suffices for this basic faithfulness statement, while
the stronger hypothesis $m\ge2N$ used here is imposed by the mixed-harmonic
stable range and in particular guarantees PBW independence on the represented
sector.  Let $\scrR_N^{\rm abs,PBW}$ denote the abstract Hermitian radial PBW
module with basis symbols
$\mathbf Z_I\mathbf Z_J^\dagger\mathbf H^\alpha$ in the same standard
order, modulo only the Hermitian radial relations of
\cite{DeSchepperGuzmanSommen2017}; equivalently, it is the abstract PBW
module whose represented basis is displayed in
\eqref{eq:represented-radial-pbw}.  We denote the generator-wise realization
by
\begin{equation}\label{eq:iota-bosonic}
 \iota_{m,N}:\scrR_N^{\rm abs,PBW}\longrightarrow
 \scrR^{\rm PBW}_{m,N},
 \qquad
 \mathbf Z_i\mapsto Z_i,\quad
 \mathbf Z_i^\dagger\mapsto Z_i^\dagger,\quad
 \mathbf H_{ij}\mapsto H_{ij}.
\end{equation}
The finite fermionic factors are adjoined in Section~\ref{sec:fermionic};
Section~\ref{sec:coordinate} then extends \eqref{eq:iota-bosonic} to the full
finite-support radial PBW module.

A \emph{coordinate extension in the localized transported setting} therefore
means an operator family defined on every element of
$\scrP^{\rm Cl}_{m,N}$ tensored with the factorwise localized fermionic charge
modules, preserving that polynomial module, agreeing with the prescribed
radial operators under the realization \eqref{eq:iota-bosonic}, and having the
required flatness, covariance, trace, and classical limit.  No finite local
differential formula is included in this definition.  The distinction is
important because Section~\ref{sec:obstructions} proves that two broad
undeformed locality classes cannot realize the exact radial restriction.

\subsection{Stable mixed-harmonic separation}
For the scalar polynomial algebra define mixed contractions and their joint
harmonic space by
\begin{equation}\label{eq:mixed-harmonics}
 \Delta_{ij}:=\sum_{a=1}^m
 \partial_{z_{ia}}\partial_{\bar z_{ja}},
 \qquad
 \scrH_{m,N}^{\rm mix}:=\bigcap_{i,j}\Ker\Delta_{ij}.
\end{equation}
Let $\scrI_N:=\C[H_{ij}:1\le i,j\le N]$.

\begin{theorem}[Stable mixed-harmonic separation]\label{thm:stable-separation}
Assume $m\ge2N$.  Multiplication is a linear isomorphism
\begin{equation}\label{eq:stable-separation}
 \scrI_N\otimes\scrH_{m,N}^{\rm mix}
 \longrightarrow\scrP^{\rm sc}_{m,N},
 \qquad I\otimes h\longmapsto Ih.
\end{equation}
Hence every scalar polynomial has a unique finite normal form
\begin{equation}\label{eq:normal-form}
 P=\sum_{\alpha\in\N^{N\times N}}H^\alpha h_\alpha(P),
 \qquad h_\alpha(P)\in\scrH_{m,N}^{\rm mix}.
\end{equation}
The normal form is $U(m)$-equivariant, label-permutation covariant, and
compatible with inclusions of label support that remain in the stable range.
\end{theorem}

\begin{proof}
Identify the scalar polynomial space with
\[
 \mathcal P\bigl(\C^m\otimes\C^N\oplus
 (\C^m)^*\otimes\C^N\bigr).
\]
For the compact dual pair $(U(m),U(N,N))$, the separation-of-variables
stable-range description states that, for $m\ge p+q$, the polynomial
oscillator representation is free over the symmetric algebra of the
holomorphic nilradical and its free factor is the joint harmonic space; see
\cite[Section~3]{HoweTanWillenbring2005} and the classical-invariant-theory
formulation in
\cite{Howe1989ClassicalInvariantTheory,HoweKimLee2017StandardMonomial}.
Here $p=q=N$.  The holomorphic nilradical is
$\C^N\otimes(\C^N)^*$; its symmetric algebra is exactly the polynomial
algebra generated by the $H_{ij}$, while the opposite nilradical acts by the
mixed contractions $\Delta_{ij}$.  Hence the cited separation theorem gives
precisely the isomorphism \eqref{eq:stable-separation}.  Uniqueness of the
factorization implies \eqref{eq:normal-form}.  The $U(m)$ action and label
permutations preserve both the invariant multipliers and the joint kernel, so
the normal form is equivariant and compatible with stable support inclusions.
\end{proof}

For the normal form \eqref{eq:normal-form} define intrinsic Gram removal and
occupancy operators by
\begin{equation}\label{eq:gram-removal}
 \mathscr R_{ij}(H^\alpha h):=\alpha_{ij}H^{\alpha-e_{ij}}h,
 \qquad
 \mathscr N_{ij}:=H_{ij}\mathscr R_{ij}.
\end{equation}
They commute pairwise and preserve polynomials.  This consequence of
uniqueness is important: ambient first-order Capelli realizations need not
commute off the invariant algebra, whereas the intrinsic normal-form
occupancies do.

\subsection{The scalar divided-power gauge}
On the radial Gram algebra the flat $q$-Hermitian theory uses the diagonal
weight
\begin{equation}\label{eq:radial-gauge}
 g_\alpha
 =\prod_{i=1}^N\frac{\alpha_{ii}!}{c_{\alpha_{ii}}!}
  \prod_{i\ne j}\frac{\alpha_{ij}!}{[\alpha_{ij}]_q!},
 \qquad
 c_r!:=\prod_{s=1}^r c_s.
\end{equation}
The stable normal form extends this weight canonically to all scalar
polynomials:
\begin{equation}\label{eq:scalar-gauge}
 \mathbb G_{m,N}^{\rm nf}(H^\alpha h):=g_\alpha H^\alpha h.
\end{equation}
Equivalently,
\begin{equation}\label{eq:gauge-factorization}
 \mathbb G_{m,N}^{\rm nf}
 =\prod_i\omega_{\rm d}(\mathscr N_{ii})
  \prod_{i\ne j}\omega_{\rm c}(\mathscr N_{ij}),
 \quad
 \omega_{\rm d}(r)=\frac{r!}{c_r!},\quad
 \omega_{\rm c}(r)=\frac{r!}{[r]_q!}.
\end{equation}

\begin{proposition}[Stable scalar lift]\label{prop:scalar-lift}
For $m\ge2N$, $\mathbb G_{m,N}^{\rm nf}$ is a degree-preserving polynomial
automorphism.  The conjugated coordinate derivatives
\[
 \partial_{z_{ka}}^{(q)}
 :=\mathbb G_{m,N}^{\rm nf}\partial_{z_{ka}}
   (\mathbb G_{m,N}^{\rm nf})^{-1},
 \qquad
 \partial_{\bar z_{ka}}^{(q)}
 :=\mathbb G_{m,N}^{\rm nf}\partial_{\bar z_{ka}}
   (\mathbb G_{m,N}^{\rm nf})^{-1}
\]
preserve scalar polynomials.  On Gram polynomials the induced Hermitian
operators are exactly \eqref{eq:radial-target-minus}--
\eqref{eq:radial-target-plus}.  The construction is $U(m)$- and
label-covariant and tends coefficientwise to the identity transport as
$q\to1$.
\end{proposition}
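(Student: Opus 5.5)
The plan is to read everything off the direct-sum decomposition of Theorem~\ref{thm:stable-separation}. First, $\mathbb G_{m,N}^{\rm nf}$ is well defined and bijective. It acts on the summand $H^\alpha\scrH_{m,N}^{\rm mix}$ by the scalar $g_\alpha$. Each $g_\alpha$ is finite and nonzero, since $0<q<1$ gives $c_s>0$ and $[s]_q>0$. So $\mathbb G_{m,N}^{\rm nf}$ is invertible, with inverse multiplying by $g_\alpha^{-1}$ on the same summands.

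For degree preservation we use the bigrading of $\scrP^{\rm sc}_{m,N}$ by holomorphic and antiholomorphic degree, refined by the $N$ labels. The $H_{ij}$ are bihomogeneous and the $\Delta_{ij}$ are bihomogeneous of negative bidegree, so $\scrH^{\rm mix}_{m,N}$ is a graded subspace. Uniqueness in \eqref{eq:stable-separation} then shows that the normal-form components $h_\alpha(P)$ of a homogeneous $P$ are homogeneous of the complementary degree. Hence $\mathbb G_{m,N}^{\rm nf}$ and its inverse preserve every multi-degree, and in particular preserve polynomials. The conjugated derivatives $\partial^{(q)}$ are composites of three polynomial-preserving maps, so they preserve scalar polynomials with no further work. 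On $\scrP^{\rm Cl}_{m,N}$ we let $\mathbb G_{m,N}^{\rm nf}$ act on the scalar factor only, tensored with the identity on $\Cl_{2m}^{\C}$. Since $\partial_{Z_k}$ and $\partial_{Z_k^\dagger}$ have constant Clifford coefficients, the conjugated Hermitian operators are $\sum_a f_a^\dagger\partial^{(q)}_{z_{ka}}$ and $\sum_a f_a\partial^{(q)}_{\bar z_{ka}}$.

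The key computation is the radial restriction. Take $P=H^\alpha$, whose normal form has $h=1$. Then
\[
\partial_{z_{ka}}H^\alpha=\sum_j\alpha_{kj}H^{\alpha-e_{kj}}\bar z_{ja}.
\]
The crucial observation, which I expect to be the only nontrivial point, is that $\bar z_{ja}$ is linear and hence lies in $\scrH^{\rm mix}_{m,N}$. So each term $H^{\alpha-e_{kj}}\bar z_{ja}$ is already in normal form, and by uniqueness $\mathbb G_{m,N}^{\rm nf}$ multiplies it by $g_{\alpha-e_{kj}}$. Since $(\mathbb G_{m,N}^{\rm nf})^{-1}H^\alpha=g_\alpha^{-1}H^\alpha$, contracting with $f_a^\dagger$ and summing over $a$ gives
\[
\sum_j\alpha_{kj}\frac{g_{\alpha-e_{kj}}}{g_\alpha}\,Z_j^\dagger H^{\alpha-e_{kj}} .
\]
The weight ratios follow from \eqref{eq:radial-gauge}. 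For $j\ne k$ we get $\alpha_{kj}g_{\alpha-e_{kj}}/g_\alpha=[\alpha_{kj}]_q$, and $[\alpha_{kj}]_qH^{\alpha-e_{kj}}=\delta_{H_{kj},q}H^\alpha$. For $j=k$ we get $c_{\alpha_{kk}}=\gamma_q[\alpha_{kk}]_{q^2}$, and $c_{\alpha_{kk}}H^{\alpha-e_{kk}}=\gamma_q\delta_{H_{kk},q^2}H^\alpha$. This is \eqref{eq:radial-target-minus} on monomials, and the identity extends linearly to all Gram polynomials. The conjugate identity \eqref{eq:radial-target-plus} follows in the same way from $\partial_{\bar z_{ka}}H^\alpha=\sum_i\alpha_{ik}z_{ia}H^{\alpha-e_{ik}}$, using that $z_{ia}$ is also harmonic.

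Covariance comes from the equivariance statement in Theorem~\ref{thm:stable-separation}. The weights $g_\alpha$ are unchanged by simultaneous permutation of labels, because diagonal entries go to diagonal entries and off-diagonal to off-diagonal, and $U(m)$ acts trivially on the $H_{ij}$. So $\mathbb G_{m,N}^{\rm nf}$ commutes with both actions, and the conjugated Hermitian operators transform like the classical ones. For the limit, $c_r\to r$ and $[r]_q\to r$ as $q\to1$, so $g_\alpha\to1$ for each fixed $\alpha$. Hence $\mathbb G_{m,N}^{\rm nf}$ tends to the identity coefficientwise on each finite-dimensional multi-degree component, and the $\partial^{(q)}$ tend to the classical derivatives.
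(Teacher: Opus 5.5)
Your proof is correct and follows the paper's route: well-definedness and invertibility come from the stable normal form, the neighboring weight ratios $c_r/r$ and $[r]_q/r$ on Gram monomials give the Jackson coefficients, and covariance and the $q\to1$ limit come from uniqueness and $g_\alpha\to1$. Your explicit remark that $H^{\alpha-e_{kj}}\bar z_{ja}$ is already in normal form, because linear coordinates are mixed-harmonic, is exactly the step the paper's one-line ratio computation leaves implicit.
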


\begin{proof}
Theorem~\ref{thm:stable-separation} makes \eqref{eq:scalar-gauge} a well-defined
diagonal automorphism on every finite degree.  If a coordinate derivative
removes a diagonal factor $H_{kk}^r$, the neighboring gauge ratio is
$c_r/r$; if it removes a crossed factor $H_{kj}^r$, the ratio is
$[r]_q/r$.  After summing the coordinate derivatives against the Witt
symbols, these ratios give precisely the diagonal and crossed Jackson
coefficients in \eqref{eq:radial-target-minus}.  The conjugate formula is
identical.  All other assertions follow from uniqueness of the normal form
and $g_\alpha\to1$ coefficientwise.
\end{proof}

The gauge transports the scalar product,
\begin{equation}\label{eq:scalar-product}
 F\star_qG:=\mathbb G_{m,N}^{\rm nf}
 \bigl((\mathbb G_{m,N}^{\rm nf})^{-1}F\,
       (\mathbb G_{m,N}^{\rm nf})^{-1}G\bigr).
\end{equation}
This product is associative, commutative and unital; the conjugated scalar
coordinate derivatives are derivations for it.  We shall later see that this
common scalar product does \emph{not} extend to one common full
Clifford--Weyl product for both Hermitian polarizations.

\subsection{Trace coupling}
Let $n$ be the finite fermionic support and set $Q=q^{2m-2n}$.  Here and below, the \emph{scalar boundary trace} means the scalar
(identity) coefficient in the action of the Hermitian operator on the
corresponding linear supervector, after the dimension--Cartan boundary is
split into its scalar and Cartan parts.  It is not the trace of an operator
on the infinite-dimensional polynomial module.  The elementary identity
\begin{equation}\label{eq:q-subtraction}
 [2m-2n]_q=[2m]_q-Q[2n]_q
\end{equation}
has the following consequence.  If a coordinate boundary trace split as an
independently deformed bosonic contribution $[2m]_q$ plus a fermionic
contribution depending only on $n$, then it could not equal
$[2m-2n]_q$ for all $m\ge n>0$.  The fermionic scalar contraction is therefore
forced to contain the mixed parameter $Q$.  Moreover a scalar rescaling of
the whole fermionic Hermitian pair would rescale its Cartan term together
with its scalar trace, whereas the flat radial boundary deforms the dimension
coefficient but leaves the classical Cartan coefficient unchanged.  The next
section constructs a seed in which these two requirements are separated.

\section{Fermionic seed complexes and minimal localization}\label{sec:fermionic}

The coordinate transport requires more than the linear fermionic boundary:
it needs an explicit contractible odd seed on every Weyl charge, together with
the even radial defect that couples that seed to the bosonic divided-power
transport.  We record those data in a form sufficient for the later coordinate
proof.  The seed relations needed later are displayed and verified here, so
that the coordinate construction does not depend on an external seed
calculation.

\subsection{One pair and the forced denominator}
Let $\vartheta,\bar\vartheta$ be Grassmann generators with left Berezin
derivatives.  Set
\[
 N_\vartheta=\vartheta\partial_\vartheta,
 \qquad N_{\bar\vartheta}=\bar\vartheta\partial_{\bar\vartheta},
\]
and let $P_\vartheta^\epsilon,P_{\bar\vartheta}^\epsilon$ be the corresponding
occupancy projectors.  Let $a,b$ generate the Weyl algebra
\begin{equation}\label{eq:weyl}
 ab-ba=-2i,
 \qquad B_{\rm f}:=ba-i,
\end{equation}
so that
\begin{equation}\label{eq:cartan-weights}
 [B_{\rm f},a]=2ia,
 \qquad [B_{\rm f},b]=-2ib.
\end{equation}
Put
\[
 Z_{\rm f}:=\vartheta a,
 \qquad Z_{\rm f}^\dagger:=\bar\vartheta b,
 \qquad \rho_{\rm f}:=\frac1{2i}\vartheta\bar\vartheta.
\]
For a scalar seed parameter $s$ define
\begin{equation}\label{eq:PhiPsi}
 \Phi_s(B):=\frac{4-s+2iB}{B-i},
 \qquad
 \Psi_s(B):=\frac{-s-2iB}{B-i}.
\end{equation}
The Weyl shifts are
\begin{equation}\label{eq:weyl-shifts}
 bR(B_{\rm f})=R(B_{\rm f}+2i)b,
 \qquad
 aR(B_{\rm f})=R(B_{\rm f}-2i)a.
\end{equation}
On the localization adjoining $(B_{\rm f}-i)^{-1}$ define
\begin{align}
 \mathcal F_s
 &:=b\bigl(\Phi_s(B_{\rm f})P_{\bar\vartheta}^0
       +i[2]_qP_{\bar\vartheta}^1\bigr)\partial_\vartheta,
 \label{eq:F-one}\\
 \mathcal F_s^\dagger
 &:=\bigl(\Psi_s(B_{\rm f})aP_\vartheta^0
       -i[2]_qaP_\vartheta^1\bigr)\partial_{\bar\vartheta}.
 \label{eq:Fdag-one}
\end{align}

\begin{proposition}[One-pair trace--Cartan separation]\label{prop:one-pair}
The operators \eqref{eq:F-one}--\eqref{eq:Fdag-one} are square-zero and
satisfy
\begin{align}
 \mathcal F_s(Z_{\rm f})&=-s+2iB_{\rm f},
 &\mathcal F_s^\dagger(Z_{\rm f}^\dagger)&=-s-2iB_{\rm f},
 \label{eq:one-boundary}\\
 \mathcal F_s(\rho_{\rm f})&=c_1Z_{\rm f}^\dagger,
 &\mathcal F_s^\dagger(\rho_{\rm f})&=c_1Z_{\rm f}.
 \label{eq:one-rho}
\end{align}
For the one-pair Hermitian specialization used below,
$s=q^{2m-2}[2]_q$; these are the required one-pair boundary relations.  At $q=1$ the pair tends to
$2ib\partial_\vartheta$ and $-2ia\partial_{\bar\vartheta}$.
\end{proposition}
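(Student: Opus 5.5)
The plan is to verify every assertion by direct computation in the localized tensor product of the Weyl algebra (with $(B_{\rm f}-i)^{-1}$ adjoined) and the Grassmann algebra in $\vartheta,\bar\vartheta$. The only structural inputs are the following.
\begin{itemize}
\item Weyl elements are even, so they commute with $\vartheta,\bar\vartheta$, with $\partial_\vartheta,\partial_{\bar\vartheta}$ and with the occupancy projectors.
\item Each projector $P_{\bar\vartheta}^\epsilon$ commutes with $\partial_\vartheta$, since it involves a different generator; likewise $P_\vartheta^\epsilon$ commutes with $\partial_{\bar\vartheta}$.
\item The shift rules \eqref{eq:weyl-shifts} hold, together with the two products $ba=B_{\rm f}+i$ and $ab=ba-2i=B_{\rm f}-i$, both read off from \eqref{eq:weyl}.
\end{itemize}
The operators act on elements of this algebra by composition and left multiplication; on $1$ one has $P^0=1$ and $P^1=0$.

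\emph{Square-zero.} Write $\mathcal F_s=bX\partial_\vartheta$ with $X=\Phi_s(B_{\rm f})P_{\bar\vartheta}^0+i[2]_qP_{\bar\vartheta}^1$. The factor $bX$ is built from even Weyl elements and $\bar\vartheta$-projectors, so it commutes with $\partial_\vartheta$. Hence $\mathcal F_s^2=(bX)^2\partial_\vartheta^2=0$. The same argument applied to $\mathcal F_s^\dagger$, using $\partial_{\bar\vartheta}^2=0$, gives $(\mathcal F_s^\dagger)^2=0$.

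\emph{Boundary values \eqref{eq:one-boundary}.} For the first value, $\partial_\vartheta(\vartheta a)=a$ and $P^0_{\bar\vartheta}$ acts as the identity, so $\mathcal F_s(Z_{\rm f})=b\,\Phi_s(B_{\rm f})a$. By \eqref{eq:weyl-shifts} and $ba=B_{\rm f}+i$ this equals
\[
\Phi_s(B_{\rm f}+2i)(B_{\rm f}+i)=\frac{4-s+2iB_{\rm f}-4}{B_{\rm f}+i}(B_{\rm f}+i)=-s+2iB_{\rm f}.
\]
For the second value, $\partial_{\bar\vartheta}(\bar\vartheta b)=b$, so $\mathcal F_s^\dagger(Z_{\rm f}^\dagger)=\Psi_s(B_{\rm f})\,ab=\Psi_s(B_{\rm f})(B_{\rm f}-i)=-s-2iB_{\rm f}$. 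Here the denominator $B_{\rm f}-i$ is cancelled directly. This is exactly where the adjoined inverse is needed, and it is the point the paper calls the forced denominator.

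\emph{Values on $\rho_{\rm f}$, \eqref{eq:one-rho}.} For $\mathcal F_s$, we have $\partial_\vartheta(\vartheta\bar\vartheta)=\bar\vartheta$, and the occupied projector $P^1_{\bar\vartheta}$ contributes $i[2]_q$. This gives
\[
\mathcal F_s(\rho_{\rm f})=\tfrac{1}{2i}\,i[2]_q\,b\bar\vartheta=\tfrac{[2]_q}{2}\,\bar\vartheta b=c_1Z_{\rm f}^\dagger,
\]
using $c_1=\tfrac12[2]_q$. For $\mathcal F_s^\dagger$, the left Berezin derivative gives $\partial_{\bar\vartheta}(\vartheta\bar\vartheta)=-\vartheta$, and $P^1_\vartheta$ selects the coefficient $-i[2]_q$. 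This gives
\[
\mathcal F_s^\dagger(\rho_{\rm f})=\tfrac{1}{2i}(-i[2]_q)\,a(-\vartheta)=\tfrac{[2]_q}{2}\,\vartheta a=c_1Z_{\rm f}.
\]

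\emph{The specialization and $q\to1$.} The specialization $s=q^{2m-2}[2]_q$ is simply substituted; its role is that $-s$ becomes the one-pair scalar trace term required by \eqref{eq:q-subtraction}. As $q\to1$ we have $s\to2$ and $i[2]_q\to2i$. Since $2+2iB=2i(B-i)$, this gives
\[
\Phi_s\to\frac{2+2iB}{B-i}=2i,\qquad \Psi_s\to\frac{-2-2iB}{B-i}=-2i.
\]
Both projector branches then carry the same coefficient, and $P^0+P^1=1$, so the pair tends to $2ib\partial_\vartheta$ and $-2ia\partial_{\bar\vartheta}$.

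The only genuine care point is bookkeeping of signs and ordering. Specifically, one must track the Berezin sign in $\partial_{\bar\vartheta}(\vartheta\bar\vartheta)$, and must apply the shift $b\Phi_s(B_{\rm f})=\Phi_s(B_{\rm f}+2i)b$ on the correct side. I expect this sign and shift tracking to be the main obstacle; the rest is routine.
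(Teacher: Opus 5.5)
Your proposal is correct and follows the paper's proof: square-zero because the occupancy projectors commute with the opposite Berezin derivative, the boundary values from the Weyl shift together with $ba=B_{\rm f}+i$ and $ab=B_{\rm f}-i$, the $\rho_{\rm f}$ values from the occupied projectors and the Berezin sign, and the classical limit from $\Phi_2=2i$ and $\Psi_2=-2i$. You supply the intermediate computations that the paper leaves implicit. One small point: the localization is used in both boundary computations, not only the second, since in the first the shifted denominator $B_{\rm f}+i$ is cancelled by $ba$.
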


\begin{proof}
The occupancy projectors commute with the opposite Berezin derivative, hence
the squares vanish.  From \eqref{eq:weyl-shifts} and $ba=B_{\rm f}+i$,
\[
 \mathcal F_s(Z_{\rm f})
 =\Phi_s(B_{\rm f}+2i)(B_{\rm f}+i)
 =-s+2iB_{\rm f}.
\]
The conjugate identity uses $ab=B_{\rm f}-i$.  On $\rho_{\rm f}$ only the
occupied projectors contribute, and the two Berezin signs give
\eqref{eq:one-rho}.  The classical limit follows from
$\Phi_2(B)=2i$ and $\Psi_2(B)=-2i$.
\end{proof}

The denominator is not an artifact of the displayed ansatz.

\begin{theorem}[Polynomial seed obstruction]\label{thm:seed-no-go}
Consider a charge-preserving fermionic operator which is first order in
$\partial_\vartheta$, preserves the two opposite-occupancy sectors, and has a
polynomial Weyl coefficient.  On the empty opposite-occupancy sector its
coefficient has the unique right-normal form $bF(B_{\rm f})$ with
$F\in\C[B]$.  Requiring the boundary value
$-s+2iB_{\rm f}$ forces
\[
 F(B)=\frac{4-s+2iB}{B-i}.
\]
Hence no polynomial seed in this class has the required boundary when
$s\ne2$.
\end{theorem}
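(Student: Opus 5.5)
The plan is to reduce the statement to a single polynomial identity in the Cartan variable and then read off that this identity has a rational, non-polynomial solution. I will proceed in three steps.

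\emph{Step 1: the right-normal form.} A charge-preserving operator that is first order in $\partial_\vartheta$ lowers the $\vartheta$-charge by one. Its Weyl coefficient must therefore raise the Weyl charge by one, since $Z_{\rm f}=\vartheta a$ pairs $\vartheta$ with $a$ and $b$ is the opposite partner. By \eqref{eq:cartan-weights} the Weyl algebra is graded by $\operatorname{ad}B_{\rm f}$-weight. Its weight $-2i$ component is $b\,\C[B_{\rm f}]$. To see this, every Weyl word can be right-normal ordered as $b^k a^l$. The weight condition forces $k-l=1$, and the product $b^{l}a^{l}$ is a polynomial in $ba=B_{\rm f}+i$ (by induction, $b^la^l=\prod_{r=0}^{l-1}(B_{\rm f}+i-2ir)$ after the shift \eqref{eq:weyl-shifts}). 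Uniqueness follows from the PBW basis of the Weyl algebra, because $b$ is not a zero divisor and the $B_{\rm f}^j$ are independent. Preserving the opposite-occupancy sectors lets me restrict to $P^0_{\bar\vartheta}$. There the coefficient is $bF(B_{\rm f})$ with $F\in\C[B]$.

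\emph{Step 2: the boundary equation.} I evaluate the operator on $Z_{\rm f}=\vartheta a$ exactly as in the proof of Proposition~\ref{prop:one-pair}. The derivative $\partial_\vartheta$ removes $\vartheta$, and moving $F$ past $b$ by \eqref{eq:weyl-shifts} gives
\[
 bF(B_{\rm f})a=F(B_{\rm f}+2i)\,ba=F(B_{\rm f}+2i)(B_{\rm f}+i).
\]
Imposing the boundary value $-s+2iB_{\rm f}$ gives the identity $F(B+2i)(B+i)=-s+2iB$. Since the Weyl algebra is a domain and $\C[B_{\rm f}]$ embeds faithfully, I can read this as an identity in $\C(B)$. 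Substituting $B\mapsto B-2i$ yields
\[
 F(B)(B-i)=-s+2iB+4=4-s+2iB,
\]
so $F=(4-s+2iB)/(B-i)$ is forced. This is exactly $\Phi_s$ from \eqref{eq:PhiPsi}.

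\emph{Step 3: the obstruction.} The quotient $F$ is a polynomial if and only if $B-i$ divides $4-s+2iB$. That happens precisely when the numerator vanishes at $B=i$, i.e. when $4-s-2=0$, i.e. $s=2$. So for $s\ne2$ no $F\in\C[B]$ satisfies the equation, and no polynomial seed in this class exists.

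The main obstacle is Step 1. I need to justify that the class of operators really reduces to a single right-normal coefficient $bF(B_{\rm f})$, which means invoking charge preservation to pin down the $B_{\rm f}$-weight and using the PBW/domain property to get uniqueness. Once that is in place, Steps 2 and 3 are a direct shift computation.
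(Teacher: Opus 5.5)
Your proposal is correct and follows the paper's proof essentially step for step. You identify the weight-$(-2i)$ (degree $-1$) part of the Weyl algebra as $b\,\C[B_{\rm f}]$ by PBW normal form, reduce the boundary condition to $F(B+2i)(B+i)=-s+2iB$, shift $B$, and test divisibility by $B-i$ at $B=i$.

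One harmless slip: the parenthetical product should read $b^la^l=\prod_{r=0}^{l-1}(B_{\rm f}+i+2ir)$; for example, $b^2a^2=(B_{\rm f}+3i)(B_{\rm f}+i)$. This does not affect the argument, since you only use that $b^la^l\in\C[B_{\rm f}]$.
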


\begin{proof}
The degree $-1$ part of the polynomial Weyl algebra is
$b\C[B_{\rm f}]$ by PBW normal form.  The boundary equation is
$F(B+2i)(B+i)=-s+2iB$.  Shifting $B$ yields the displayed quotient.  Divisibility
by $B-i$ would require $4-s+2iB$ to vanish at $B=i$, i.e. $s=2$.
\end{proof}

\subsection{All-charge four-slot normal form and contracting seed}
For $j\in\mathbb Z$ put
\[
 x_j:=\begin{cases}a^j,&j\ge0,\\ b^{-j},&j<0,\end{cases}
\]
and define the right-normal multiplication factors
\begin{align}
 \beta_j(B)&:=\begin{cases}B+(2j-1)i,&j\ge1,\\1,&j\le0,\end{cases}
 &bx_j&=x_{j-1}\beta_j(B_{\rm f}),
 \label{eq:beta-charge-journal}\\
 \alpha_j(B)&:=\begin{cases}1,&j\ge0,\\B+(2j+1)i,&j\le-1,\end{cases}
 &ax_j&=x_{j+1}\alpha_j(B_{\rm f}).
 \label{eq:alpha-charge-journal}
\end{align}
The charge-$c$ component is the free right Cartan module with basis
\begin{equation}\label{eq:charge-basis}
 e_{0,c}:=x_c,
 \quad
 e_{\vartheta,c}:=\vartheta x_{c+1},
 \quad
 e_{\bar\vartheta,c}:=\bar\vartheta x_{c-1},
 \quad
 e_{2,c}:=\vartheta\bar\vartheta x_c.
\end{equation}
In this basis the global one-pair boundary operators preserve every charge and
have only four nonzero arrows:
\begin{align}
 \mathcal F_s(e_{\vartheta,c}R)
 &=e_{0,c}\,\sigma_c^+(B_{\rm f})R,
 &\sigma_c^+(B)&:=\beta_{c+1}(B)\Phi_s(B+2i(c+1)),
 \label{eq:sigma-plus-journal}\\
 \mathcal F_s(e_{2,c}R)
 &=e_{\bar\vartheta,c}\,\tau_c^+(B_{\rm f})R,
 &\tau_c^+(B)&:=i[2]_q\beta_c(B),
 \label{eq:tau-plus-journal}\\
 \mathcal F_s^\dagger(e_{\bar\vartheta,c}R)
 &=e_{0,c}\,\sigma_c^-(B_{\rm f})R,
 &\sigma_c^-(B)&:=\alpha_{c-1}(B)\Psi_s(B+2ic),
 \label{eq:sigma-minus-journal}\\
 \mathcal F_s^\dagger(e_{2,c}R)
 &=e_{\vartheta,c}\,\tau_c^-(B_{\rm f})R,
 &\tau_c^-(B)&:=i[2]_q\alpha_c(B).
 \label{eq:tau-minus-journal}
\end{align}
They annihilate the other two basis slots.  These formulas follow directly
from the Berezin derivative, the occupancy projectors and
\eqref{eq:beta-charge-journal}--\eqref{eq:alpha-charge-journal}.

The triangular connection uses a normalized contractible version of this
four-slot boundary.  Put
\begin{equation}\label{eq:pi-def}
 \pi_c^+:=-\frac14\sigma_c^+,
 \qquad
 \pi_c^-:=-\frac14\sigma_c^-,
\end{equation}
and define the plus seed differential by
\begin{align}
 \mathbb Q_{c;s}^+(e_{\vartheta,c}R)
 &=e_{0,c}\pi_c^+(B_{\rm f})R,
 \label{eq:Qplus-first}\\
 \mathbb Q_{c;s}^+(e_{2,c}R)
 &=\frac1{2i}e_{\bar\vartheta,c}\beta_c(B_{\rm f})R,
 \label{eq:Qplus-second}
\end{align}
with zero action on $e_{0,c}$ and $e_{\bar\vartheta,c}$.  Define the minus seed
by order-preserving Hermitian conjugation.  On any coefficient ring in which
the two displayed arrow coefficients are units, the canonical plus homotopy is
\begin{align}
 \mathbb h_{c;s}^+(e_{0,c}R)
 &=e_{\vartheta,c}(\pi_c^+(B_{\rm f}))^{-1}R,
 \label{eq:hplus-first}\\
 \mathbb h_{c;s}^+(e_{\bar\vartheta,c}R)
 &=2i\,e_{2,c}(\beta_c(B_{\rm f}))^{-1}R,
 \label{eq:hplus-second}
\end{align}
and it vanishes on $e_{\vartheta,c},e_{2,c}$.  Again the minus homotopy is the
Hermitian conjugate.  Hence, on every charge block,
\begin{equation}\label{eq:Qh-contract}
 (\mathbb Q_{c;s}^\pm)^2=(\mathbb h_{c;s}^\pm)^2=0,
 \qquad
 \{\mathbb Q_{c;s}^\pm,\mathbb h_{c;s}^\pm\}=I.
\end{equation}
Thus no abstract acyclicity assumption is being made: the homotopies are
literally the inverse arrows of two disjoint two-term complexes.

\subsection{Minimal all-charge localization}
Let $\sigma R(B):=R(B+2i)$ and set
\begin{equation}\label{eq:orbit-data}
 d(B):=B-i,
 \qquad \ell_+(B):=s-2iB,
 \qquad \ell_-(B):=s+2iB.
\end{equation}
Let $S_s$ be the multiplicative set generated by
\begin{equation}\label{eq:orbit-set}
 \{\sigma^kd,\ \sigma^k\ell_+,\ \sigma^k\ell_-:k\in\mathbb Z\},
 \qquad
 \mathcal A_s^{\min}:=S_s^{-1}\C[B_{\rm f}].
\end{equation}
Shift stability makes this a legitimate right Cartan coefficient localization
for the Weyl normal-ordering rules \eqref{eq:weyl-shifts}.

For later reference define
\begin{equation}\label{eq:Lcpm}
 L_c^+:=s-2iB_{\rm f}+4c=\sigma^c\ell_+(B_{\rm f}),
 \qquad
 L_c^-:=s+2iB_{\rm f}-4c=\sigma^c\ell_-(B_{\rm f}).
\end{equation}
With $\mathfrak a_c:=\alpha_c$ and $\mathfrak b_c:=\beta_c$ one has
\begin{align}
 \mathfrak a_c
 &=\begin{cases}1,&c\ge0,\\ \sigma^{c+1}d,&c\le-1,\end{cases}
 &
 \mathfrak b_c
 &=\begin{cases}\sigma^cd,&c\ge1,\\1,&c\le0,\end{cases}
 \label{eq:ab-factorization-journal}\\
 \pi_c^+&=\frac{L_c^+}{4\mathfrak a_c},
 &\pi_c^-&=\frac{L_c^-}{4\mathfrak b_c}.
 \label{eq:pi-factorization-journal}
\end{align}
These factorizations identify exactly which Cartan factors the inverse arrows
require.

\begin{theorem}[Minimal factorwise Cartan localization]\label{thm:min-local}
The localization $\mathcal A_s^{\min}$ is sufficient for the two conjugate
chargewise seed differentials, their canonical contracting homotopies and the
two triangular connection gauges for every $c\in\mathbb Z$.  It is minimal in
the following shift-stable sense: if a $\sigma$-stable multiplicative set
makes the canonical four-slot complexes of both polarizations contractible on
every charge, then its saturation contains $S_s$.

For finite fermionic support the minimal localization is the tensor product of
the corresponding one-pair localizations.  No mixed Cartan denominator is
needed.  At special $s$ some of the three orbit families may coalesce.
\end{theorem}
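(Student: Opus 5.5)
The plan is to read everything off the explicit four-slot data \eqref{eq:sigma-plus-journal}--\eqref{eq:tau-minus-journal} and the factorizations \eqref{eq:ab-factorization-journal}--\eqref{eq:pi-factorization-journal}. The key facts are that every coefficient involved is a product of $\sigma$-shifts of $d,\ell_+,\ell_-$, and that contractibility of a two-term free rank-one complex is equivalent to the arrow coefficient being a unit. The coefficient ring $\C[B_{\rm f}]$ and its localizations are commutative, so unit and saturation arguments are elementary.

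\emph{Sufficiency.}
\begin{enumerate}
\item By \eqref{eq:pi-factorization-journal}, $\pi_c^+=L_c^+/(4\mathfrak a_c)$ and $\pi_c^-=L_c^-/(4\mathfrak b_c)$. By \eqref{eq:ab-factorization-journal}, $\mathfrak a_c$ and $\mathfrak b_c$ are either $1$ or a single shift $\sigma^kd$.
\item Hence the seed differentials $\mathbb Q^\pm_{c;s}$ require only inverses of shifts of $d$, which lie in $S_s$. This reproduces the single denominator $B-i$ of $\Phi_s,\Psi_s$ after shifting by $2i(c+1)$ or $2ic$.
\item The homotopies \eqref{eq:hplus-first}--\eqref{eq:hplus-second} and their conjugates require $(\pi_c^\pm)^{-1}=4\mathfrak a_c/L_c^+$ (respectively $4\mathfrak b_c/L_c^-$), together with $\beta_c^{-1}$ and $\alpha_c^{-1}$. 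By \eqref{eq:Lcpm}, $L_c^\pm=\sigma^c\ell_\pm$, and $\beta_c,\alpha_c$ are shifts of $d$ or $1$. All of these lie in $S_s$.
\item Shift stability of $S_s$ makes the normal-ordering rules \eqref{eq:weyl-shifts} legitimate, so \eqref{eq:Qh-contract} holds over $\mathcal A_s^{\min}$.
\item The triangular connection gauges are finite products of these arrow coefficients and their inverses on each charge. I would check from their definition that only the same $\pi_c^\pm,\beta_c,\alpha_c$ occur; I expect this to be bookkeeping.
\end{enumerate}

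\emph{Minimality.} Let $T$ be a $\sigma$-stable multiplicative set for which all four-slot complexes of both polarizations are contractible.
\begin{enumerate}
\item Each charge block splits as two two-term complexes, $e_{\vartheta,c}\to e_{0,c}$ and $e_{2,c}\to e_{\bar\vartheta,c}$ (and their conjugates). Each has rank-one free right $T^{-1}\C[B]$-modules and arrow coefficient $r$.
\item If $\{Q,h\}=I$, then $h$ maps the target back to the source with some coefficient $u$. Evaluating on the target gives $ru=1$, so $r$ is a unit of $T^{-1}\C[B]$.
\item Therefore $\beta_c$, $\alpha_c$ and $\pi_c^\pm$ are units for every $c$.
\item From $\beta_c=\sigma^cd$ for $c\ge1$ and $\alpha_c=\sigma^{c+1}d$ for $c\le-1$ we get all $\sigma^kd$ with $k\ge1$ and $k\le0$, i.e. all shifts of $d$.
\item Then $L_c^+=4\pi_c^+\mathfrak a_c$ and $L_c^-=4\pi_c^-\mathfrak b_c$ are units for all $c$, i.e. all $\sigma^c\ell_\pm$.
\item In a commutative localization $T^{-1}\C[B]$, an element of $\C[B]$ is a unit iff it divides an element of $T$, i.e. lies in the saturation. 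Hence the saturation of $T$ contains $S_s$.
\end{enumerate}
Shift stability of $T$ is not even needed beyond guaranteeing that the Weyl rules make sense. The remark on coalescence follows because for special $s$ the roots $i$, $-is/2$ and $is/2$ can lie on a common $2i\mathbb Z$-orbit.

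\emph{Finite support.} For $n$ pairs with independent Cartan elements $B^{(1)},\dots,B^{(n)}$, the tensor product of one-pair complexes has arrow coefficients each involving a single $B^{(p)}$ (with Koszul signs). The tensor product of contractible complexes is contractible via the standard homotopy $h^{(1)}\otimes1+\epsilon\otimes h^{(2)}+\cdots$. Sufficiency of $\bigotimes_p\mathcal A_s^{\min,(p)}$ therefore follows from the one-pair case, and no mixed denominators arise because no arrow couples two Cartan variables.

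The step I expect to be the main obstacle is minimality for the tensor product. Contractibility of the total complex does not immediately yield contractibility of each factor. I would argue by restricting to multi-charge blocks in which all other pairs sit in a fixed slot, e.g. the $e_{0}$-slot of their two-term complexes. On such a block the total differential restricted to the chosen pair's arrows acts as that pair's arrow tensored with the identity, and the unit argument applies there.
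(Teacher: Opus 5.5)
Your one-pair argument is correct. Its minimality half takes a slightly different route from the paper's. The paper inspects only the plus second arrow at $c=1$ (coefficient $\propto\sigma d$) and the two first arrows at $c=0$ ($\ell_\pm/4$), and then uses $\sigma$-stability in both directions to fill out the orbits. You instead read every translate directly off the arrows at all charges: $\beta_c$ for $c\ge1$, $\alpha_c$ for $c\le-1$, and $L_c^+=4\pi_c^+\mathfrak a_c$, $L_c^-=4\pi_c^-\mathfrak b_c$. So your conclusion never uses shift stability. Your unit step also does not need $h$ to respect the splitting: evaluating $\{Q,h\}=I$ on the target slot and reading off that slot's coefficient already gives $ru=1$.

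\textbf{The gap is in the finite-support minimality, and the restriction argument you sketch cannot work.} A contracting homotopy of the total differential $\sum_j\widehat{\mathbb Q}_j$ need not preserve a block in which the other pairs are frozen in a fixed slot. Evaluating $\{\mathbb Q^{+,(n)}_{\mathbf c},h\}=I$ on the all-empty vector only yields $\sum_j\pm\pi^{+}_{c_j}(B_{{\rm f},j})u_j=1$. In other words, the arrows of the different pairs generate the unit ideal. That is exactly the contractibility criterion for the Koszul-type block $\bigotimes_j[e_{\vartheta}\to e_0]$, and it is strictly weaker than each arrow being a unit.

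A concrete counterexample: take $n=2$ and let $T$ be generated by the $d$-orbits in both variables together with all sums $L^\pm_{c_1}(B_{{\rm f},1})+L^\pm_{c_2}(B_{{\rm f},2})$, $c_1,c_2\in\mathbb Z$. This set is $\sigma$-stable and $\mathfrak S_2$-symmetric. It makes every total block of both polarizations contractible. The $\beta$- and $\alpha$-arrows are units, and the remaining block is, up to units, the Koszul complex on $(L^\pm_{c_1}(B_{{\rm f},1}),L^\pm_{c_2}(B_{{\rm f},2}))$, whose ideal contains their sum. Yet for generic $s$, e.g. $s\in(0,2)$, the saturation of $T$ does not contain $\ell_+(B_{{\rm f},1})$.

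So minimality cannot be derived from contractibility of the total complex. It has to be read factorwise, which is what the paper's one-line ``tensor factorization'' means. The construction uses each local homotopy $\widehat{\mathbb h}_j$, in $\mathbb h^{(n)}_{\mathbf c}$ and hence in $\mathbb N$ and $\mathbb U_q^\pm$. Therefore each pair's own four-slot complexes must be contractible, and your one-pair argument in the variable $B_{{\rm f},j}$ then forces $S_s$ in each factor.

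A smaller slip in the same part: with Koszul-signed embeddings, your tensor homotopy $h^{(1)}\otimes1+\epsilon\otimes h^{(2)}+\cdots$ gives $nI$ rather than $I$. You need either a single factor's homotopy or the paper's average $\frac1n\sum_j\widehat{\mathbb h}_j$.
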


\begin{proof}
Equations \eqref{eq:ab-factorization-journal}--
\eqref{eq:pi-factorization-journal} prove sufficiency: inverse second arrows
use translates of $d^{-1}$, while inverse first arrows use in addition
translates of $\ell_+^{-1}$ and $\ell_-^{-1}$.  Conversely, on the plus complex
at charge $c=1$ the occupied second arrow has coefficient proportional to
$\sigma d$, so a contracting inverse forces $\sigma d$ to be a unit and shift
stability forces every $\sigma^kd$.  At charge zero the first plus and minus
arrows are $\ell_+/4$ and $\ell_-/4$; their inverse arrows force both orbit
families.  Tensor factorization proves the finite-support statement.
\end{proof}

\subsection{Finite fermionic support and occupancy branches}
Fix $n\ge1$.  Pair-permutation covariance forces the same one-pair seed
parameter in each fermionic factor.  The total scalar boundary condition then
forces
\[
 n s=Q_{m|n}[2n]_q,
\]
so the common value is not an additional choice.  Put
\begin{equation}\label{eq:s-symmetric}
 Q_{m|n}:=q^{2m-2n},
 \qquad
 s_{m|n}^{\rm sym}:=\frac1nQ_{m|n}[2n]_q.
\end{equation}
For charges $\mathbf c=(c_1,\ldots,c_n)$ take the graded tensor product of the
one-pair charge modules with the common value $s=s_{m|n}^{\rm sym}$.  With
hats denoting the canonical graded embeddings, set
\begin{equation}\label{eq:finite-seed}
 \mathbb Q_{\mathbf c}^{\pm,(n)}
 :=\sum_{j=1}^n\widehat{\mathbb Q}_{j,\mathbf c}^\pm,
 \qquad
 \mathbb h_{\mathbf c}^{\pm,(n)}
 :=\frac1n\sum_{j=1}^n\widehat{\mathbb h}_{j,\mathbf c}^\pm.
\end{equation}

\begin{proposition}[Finite-support contractible seed]\label{prop:finite-seed-contract}
For every $\mathbf c\in\mathbb Z^n$,
\begin{equation}\label{eq:finite-seed-identities}
 (\mathbb Q_{\mathbf c}^{\pm,(n)})^2
 =(\mathbb h_{\mathbf c}^{\pm,(n)})^2=0,
 \qquad
 \{\mathbb Q_{\mathbf c}^{\pm,(n)},
    \mathbb h_{\mathbf c}^{\pm,(n)}\}=I.
\end{equation}
The construction is $\mathfrak S_n$-covariant and Hermitian conjugation
exchanges the polarizations and $\mathbf c$ with $-\mathbf c$.  Its total
fermionic boundary is
\begin{equation}\label{eq:finite-boundary}
 -Q_{m|n}[2n]_q\ \pm\ 2i\sum_{j=1}^nB_{{\rm f},j}.
\end{equation}
Hence after adding the bosonic boundary the total scalar trace is
$[2m-2n]_q$ and the coefficient of the classical total Cartan generator is
undeformed.
\end{proposition}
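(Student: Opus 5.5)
The plan is to reduce everything to the one-pair identities \eqref{eq:Qh-contract} and use the graded tensor structure to kill all cross terms. First I would fix the coefficient ring: by Theorem~\ref{thm:min-local}, the finite-support localization is the tensor product $\bigotimes_j\mathcal A^{\min}_{s,j}$ of the one-pair localizations, all with the common parameter $s=s^{\rm sym}_{m|n}$. Each $\widehat{\mathbb Q}^\pm_{j,\mathbf c}$ and $\widehat{\mathbb h}^\pm_{j,\mathbf c}$ acts only on the $j$-th four-slot charge block $\{e_{0,c_j},e_{\vartheta,c_j},e_{\bar\vartheta,c_j},e_{2,c_j}\}$. Its Cartan coefficients involve only $B_{{\rm f},j}$, and these commute with all $B_{{\rm f},k}$ and with the Weyl generators $a_k,b_k$ for $k\ne j$. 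All these operators are odd, since each changes the Grassmann occupancy of the $j$-th pair by one. The canonical graded embedding $\widehat{X}_j=1\otimes\cdots\otimes X\otimes\cdots\otimes 1$ with Koszul signs then gives
\[
 \{\widehat{\mathbb Q}^\pm_{j,\mathbf c},\widehat{\mathbb Q}^\pm_{k,\mathbf c}\}
 =\{\widehat{\mathbb h}^\pm_{j,\mathbf c},\widehat{\mathbb h}^\pm_{k,\mathbf c}\}
 =\{\widehat{\mathbb Q}^\pm_{j,\mathbf c},\widehat{\mathbb h}^\pm_{k,\mathbf c}\}=0
 \qquad(j\ne k).
\]

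With these relations the identities follow by expansion. We have $(\mathbb Q^{\pm,(n)}_{\mathbf c})^2=\sum_j(\widehat{\mathbb Q}^\pm_j)^2+\sum_{j<k}\{\widehat{\mathbb Q}^\pm_j,\widehat{\mathbb Q}^\pm_k\}=0$ by \eqref{eq:Qh-contract} and the cross relations, and the same computation applies to $\mathbb h$. For the anticommutator,
\[
\{\mathbb Q^{\pm,(n)}_{\mathbf c},\mathbb h^{\pm,(n)}_{\mathbf c}\}
=\frac1n\sum_{j,k}\{\widehat{\mathbb Q}^\pm_j,\widehat{\mathbb h}^\pm_k\}
=\frac1n\sum_j\widehat{\{\mathbb Q^\pm_{c_j;s},\mathbb h^\pm_{c_j;s}\}}=\frac1n\cdot nI=I.
\]
This explains the factor $1/n$. $\mathfrak S_n$-covariance holds because a permutation of pairs permutes the summands and the charge vector simultaneously, and $s$ is common to all factors. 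For the conjugation statement I would use that the order-preserving $\ddagger$ swaps $\vartheta_j\leftrightarrow\bar\vartheta_j$ and $a_j\leftrightarrow b_j$. Hence it sends $x_{c}$ to $x_{-c}$, maps the plus seed to the minus seed (this is how the minus seed was defined), and sends $\mathbf c$ to $-\mathbf c$ factorwise.

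For the boundary, I would apply \eqref{eq:one-boundary} in each factor. The linear supervector is $\sum_j Z_{{\rm f},j}$, and its cross terms vanish because $\widehat{\mathcal F}_j$ annihilates $\vartheta_k a_k$ for $k\ne j$. Summing gives $-ns\pm2i\sum_jB_{{\rm f},j}$, and $ns=Q_{m|n}[2n]_q$ by \eqref{eq:s-symmetric}, which yields \eqref{eq:finite-boundary}. Adding the bosonic scalar contribution $[2m]_q$ and using \eqref{eq:q-subtraction} gives the total scalar trace $[2m-2n]_q$. The Cartan coefficient is $\pm2i$, exactly as in the classical limit of Proposition~\ref{prop:one-pair}.

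The main obstacle is the sign bookkeeping in the cross-anticommutation. One must check that the even Weyl and Cartan coefficients of factor $j$, which sit on the right in the right-module normal form, commute past the odd Grassmann generators of factor $k$. One must also check that the right-normal coefficients of different factors do not interact through the shifts \eqref{eq:weyl-shifts}, which act only within one factor. I would verify this first on the $n=2$ basis $e_{\epsilon_1,c_1}\otimes e_{\epsilon_2,c_2}$ and then extend to general $n$ by associativity of the graded tensor product.
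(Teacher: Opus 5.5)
Your proposal is correct and follows essentially the same route as the paper. Odd maps in distinct graded tensor factors anticommute, so all cross terms vanish; the squares are then zero, and the $1/n$ normalization gives $\{\mathbb Q^{\pm,(n)}_{\mathbf c},\mathbb h^{\pm,(n)}_{\mathbf c}\}=I$. Permutation covariance and Hermitian conjugacy are factorwise, and the boundary is the sum of the one-pair values $-s\pm2iB_{{\rm f},j}$ with $ns=Q_{m|n}[2n]_q$, combined with \eqref{eq:q-subtraction}. Your extra checks on cross terms of the linear supervector and on even coefficients commuting past other factors only make explicit what the paper leaves implicit.
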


\begin{proof}
Odd maps in different tensor factors anticommute, so the squares vanish and
\[
 \{\mathbb Q_{\mathbf c}^{\pm,(n)},
   \mathbb h_{\mathbf c}^{\pm,(n)}\}
 =\frac1n\sum_{j=1}^n
  \{\widehat{\mathbb Q}_{j,\mathbf c}^\pm,
    \widehat{\mathbb h}_{j,\mathbf c}^\pm\}=I.
\]
All local parameters are equal, giving permutation covariance.  The scalar
part of \eqref{eq:finite-boundary} is
$-ns_{m|n}^{\rm sym}=-Q_{m|n}[2n]_q$; together with
\eqref{eq:q-subtraction} this gives the stated trace.  Hermitian conjugacy and
the Cartan coefficient are factorwise.
\end{proof}

For the plus polarization let $\widehat{\mathsf P}_j^+$ select the local paired
slots $(e_{\bar\vartheta,c_j},e_{2,c_j})$ and vanish on
$(e_{0,c_j},e_{\vartheta,c_j})$.  For $S\subseteq\{1,\ldots,n\}$ put
\begin{equation}\label{eq:branch-projector}
 \mathsf P_S^+
 :=\prod_{j\in S}\widehat{\mathsf P}_j^+
   \prod_{j\notin S}(I-\widehat{\mathsf P}_j^+).
\end{equation}
The $2^n$ idempotents are orthogonal and sum to the identity.  A diagonal
operator commutes with both $\mathbb Q_{\mathbf c}^{+,(n)}$ and
$\mathbb h_{\mathbf c}^{+,(n)}$ exactly when it is constant on each occupancy
branch:
\begin{equation}\label{eq:boolean-commutant}
 \mathbb X=\sum_{S\subseteq\{1,\ldots,n\}}\mathsf P_S^+X_S.
\end{equation}
Indeed each local factor is the direct sum of two invertible two-term
complexes, and commuting with both an arrow and its inverse forces equality of
the two coefficients on that local pair.

To determine the coefficient $X_S$ required by radial covariance, define for
one charge
\begin{equation}\label{eq:kappa-K}
 \kappa_c^+(B):=-\frac14\sigma_c^+(B)\alpha_c(B),
 \qquad
 K_c(B):=\frac1{2i}\alpha_{c-1}(B)\beta_c(B).
\end{equation}
For $k\ge1$ define the opposite-Euler spectral factor
\begin{equation}\label{eq:Bqk}
 B_q^{(k)}(E_{\bar z})\big|_{\text{radial degree }r}
 :=\frac{c_{r+k}}{r+k}.
\end{equation}
Write $\kappa_j^+=\kappa_{c_j}^+$ and $K_j=K_{c_j}$.  For nonempty
$S$, $k=|S|$, put
\begin{equation}\label{eq:chiS}
 \chi_S^{(k)}
 :=\sum_{j\in S}\kappa_j^+-k
   -B_q^{(k)}(E_{\bar z})\sum_{j\in S}K_j,
\end{equation}
and
\begin{equation}\label{eq:M-explicit}
 \delta_m(q):=\frac{2m-[2m]_q}{4},
 \qquad
 \boxed{\mathbb M_{\mathbf c}^{+,(n)}
 :=\delta_m(q)I
   +\sum_{\varnothing\ne S\subseteq\{1,\ldots,n\}}
     \mathsf P_S^+\chi_S^{(|S|)}.}
\end{equation}
The minus defect is its order-preserving Hermitian conjugate.

\begin{proposition}[Branch coefficient and rigidity]\label{prop:branch-rigidity}
On a scalar input in branch $S$ of size $k$ and bosonic radial degree $r$,
total radial covariance forces the defect coefficient
\begin{equation}\label{eq:required-branch-defect}
 \delta_m(q)+\sum_{j\in S}\kappa_j^+-k
 -\frac{c_{r+k}}{r+k}\sum_{j\in S}K_j.
\end{equation}
Consequently \eqref{eq:M-explicit} is the unique diagonal common-commutant
defect compatible with every scalar occupancy branch.  By
\eqref{eq:boolean-commutant}, the same coefficient is then forced on every
separated Clifford channel in that branch.
\end{proposition}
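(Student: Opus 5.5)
The plan is to compute the defect coefficient directly on a branch-$S$ scalar input rather than guess it. I take a scalar test vector $u=H^\alpha h\otimes\bigotimes_j e_{\epsilon_j,c_j}$ with $h\in\scrH^{\rm mix}_{m,N}$, radial degree $r$, and local slots in the paired pair $(e_{\bar\vartheta,c_j},e_{2,c_j})$ exactly for $j\in S$. I would then compare two results. The first is the action of the transported plus operator, namely the flat bosonic part given by Proposition~\ref{prop:scalar-lift} combined with the seed $\mathbb Q^{+,(n)}_{\mathbf c}$ through the triangular connection. The second is the prescribed radial action \eqref{eq:radial-target-minus} coupled to the finite fermionic boundary \eqref{eq:finite-boundary}. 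The defect is by definition the diagonal correction making these agree, so the task reduces to evaluating both sides on $u$ and reading off the scalar coefficient.

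The computation splits into three pieces, which I would treat in order. \emph{First,} the purely bosonic mismatch. The flat operator sums coordinate derivatives against Witt symbols and picks up the trace $2m$, whereas the transported one has $[2m]_q$; in the $-\tfrac14$ normalization of \eqref{eq:pi-def}, this produces the constant $\delta_m(q)=(2m-[2m]_q)/4$ independently of the branch. \emph{Second,} for each $j\in S$ the first-arrow data. From \eqref{eq:sigma-plus-journal} and \eqref{eq:pi-def}, passing the arrow through the right-normal factor $\alpha_{c_j}$ (via \eqref{eq:alpha-charge-journal}) contributes $\kappa_j^+=-\tfrac14\sigma^+_{c_j}\alpha_{c_j}$, and subtracting the undeformed unit normalization of each occupied local pair gives the $-k$. \emph{Third,} the second-arrow term. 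By \eqref{eq:Qplus-second}, $\rho$-type occupancy couples to the bosonic opposite Euler direction with coefficient $\tfrac1{2i}\alpha_{c_j-1}\beta_{c_j}=K_j$. Because the $k$ occupied pairs each raise the effective radial degree by one before the divided-power ratio of Proposition~\ref{prop:scalar-lift} is applied, the ratio $c_{r+k}/(r+k)$ appears, which is $B_q^{(k)}$ of \eqref{eq:Bqk}. Summing the three pieces gives \eqref{eq:required-branch-defect}.

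For uniqueness, note that the required coefficient depends only on $S$ and on the Euler eigenvalue, both of which are constant on the branch. By \eqref{eq:boolean-commutant}, any diagonal operator commuting with $\mathbb Q^{+,(n)}_{\mathbf c}$ and $\mathbb h^{+,(n)}_{\mathbf c}$ has the form $\sum_S\mathsf P_S^+X_S$. Evaluating on scalar inputs of every branch and every degree $r$ therefore determines each $X_S$ as the operator $\delta_m(q)+\chi_S^{(|S|)}$, with $X_\varnothing=\delta_m(q)$, which is exactly \eqref{eq:M-explicit}. For the last sentence I would use Theorem~\ref{thm:stable-separation}. Clifford channels in a fixed branch differ only by harmonic and Clifford factors on which the bosonic Euler factor acts through the same normal-form degree, and the commutant condition forces a single $X_S$ per branch. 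Hence the coefficient computed on scalar inputs propagates to every separated Clifford channel.

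The main obstacle I expect is the third piece. There I must justify that the shift in the Jackson ratio is exactly $k=|S|$, rather than depending on which $j\in S$ acts or on the order of the graded embeddings. My first approach would be to use that the seeds in different factors anticommute and that $\mathsf P_S^+$ commutes with them. This reduces the problem to a one-pair computation iterated $k$ times, tracking the radial degree after each contraction via \eqref{eq:gram-removal}.
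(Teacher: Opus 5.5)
There is a genuine gap, and it lies in exactly the two branch-dependent terms of \eqref{eq:required-branch-defect}.

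``Total radial covariance'' constrains total radial elements, built from the total Gram variable $H_{\rm b}+\sum_j\rho_{{\rm f},j}$ and the total vector. It does not constrain an isolated tensor $H^\alpha h\otimes\bigotimes_j e_{\epsilon_j,c_j}$: on such a vector, certainly for nonconstant harmonic $h$, there is no prescribed radial action to compare with. The paper's proof sets $a=r+k$ and treats the scalar branch-$S$ input as the component of the total radial power with $k$ occupied nilpotent factors. That component carries the falling-factorial weight $(a)_k$. The required coefficient is then a ratio of weights of different components of one radial element. Your proposal never sets up this comparison, and the mechanisms you substitute for the $S$-dependent terms do not work.

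\emph{First, the $\kappa$-sum.} You attribute $\kappa_j^+$ for $j\in S$ to the first arrow (\eqref{eq:sigma-plus-journal}, \eqref{eq:Qplus-first}). But that arrow acts on $e_{\vartheta,c_j}$, which lies outside the slots $(e_{\bar\vartheta,c_j},e_{2,c_j})$ selected by $\widehat{\mathsf P}_j^+$. So on a branch-$S$ input it contributes nothing for $j\in S$; indeed $Z_{{\rm f},j}\rho_{{\rm f},j}=0$. The factor $\alpha_{c_j}$ in $\kappa_j^+$ arises from multiplying $Z_{{\rm f},j}$ onto an \emph{unoccupied} slot. In the paper, the total boundary contains $\kappa_j^+$ for every pair. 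The crossed inputs $Z_{{\rm f},j}$ with $j\notin S$ occur with the same weight $(a)_k$ and contribute $(a)_k\kappa_j^+$ to the target scalar channel. Subtracting these is what leaves $\sum_{j\in S}\kappa_j^+$.

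\emph{Second, the factor $c_{r+k}/(r+k)$.} It cannot come from the divided-power ratio of Proposition~\ref{prop:scalar-lift}. The PBW gauge is extended trivially over the fermionic factors (start of Section~\ref{sec:coordinate}), and the weights \eqref{eq:radial-gauge} depend only on bosonic Gram exponents. So occupied pairs do not raise the degree the gauge sees. Iterating a one-pair computation while tracking $\mathscr R_{ij}$ from \eqref{eq:gram-removal} only yields bosonic-degree ratios such as $c_r/r$, never a shift by $k$. In the paper, the $1/(r+k)$ is combinatorial. In the scalar part of the total two-vector, each $K_j$ with $j\in S$ appears with weight $(a-1)_{k-1}$, and $(a-1)_{k-1}/(a)_k=1/a$. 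The factor $c_{r+k}$ is the divided-power lowering prescribed by the radial target on the augmented degree $a$.

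You flag this step yourself as the main obstacle, and your proposed resolution tracks the wrong degree, so the proposal does not yet derive \eqref{eq:required-branch-defect}. Once the coefficient is established, your uniqueness argument via \eqref{eq:boolean-commutant} is fine and coincides with the paper's branch-constancy step.
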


\begin{proof}
Let the total radial exponent be $a=r+k$.  In the expansion of the scalar
branch with $k$ occupied nilpotent factors, the isolated bosonic vector input
has falling-factorial coefficient $(a)_k$.  The total boundary initially
contains the sum of all local $\kappa_j^+$.  For each $j\notin S$, the crossed
input with the local fermionic vector contributes the same
$(a)_k\kappa_j^+$ to the target scalar channel; subtracting those crossed
terms leaves $\sum_{j\in S}\kappa_j^+$.

In the scalar part of the total two-vector, only the contractions $K_j$ with
$j\in S$ contribute.  Each occurs with coefficient $(a-1)_{k-1}$, so division
by the isolated coefficient gives
$(a-1)_{k-1}/(a)_k=1/a=1/(r+k)$.  The divided-power lowering on augmented
degree $r+k$ supplies $c_{r+k}$.  Relative to the classical bosonic boundary,
the scalar difference is $\delta_m(q)$, and the augmented occupancy degree
contributes $-k$.  The resulting coefficient is exactly
\eqref{eq:required-branch-defect}.  Branch constancy then proves uniqueness on
the whole occupancy component.
\end{proof}

Define the aggregate triangular seed
\begin{equation}\label{eq:aggregate-N}
 \mathbb N_{\mathbf c}^{+,(n)}
 :=-\mathbb h_{\mathbf c}^{+,(n)}\mathbb M_{\mathbf c}^{+,(n)},
\end{equation}
and define the minus seed by conjugation.

\begin{proposition}[Finite-support seed relations]\label{prop:finite-seed}
For every finite support and every charge vector,
\begin{equation}\label{eq:aggregate-N-rel}
 (\mathbb N_{\mathbf c}^{\pm,(n)})^2=0,
 \qquad
 \{\mathbb Q_{\mathbf c}^{\pm,(n)},
   \mathbb N_{\mathbf c}^{\pm,(n)}\}
 =-\mathbb M_{\mathbf c}^{\pm,(n)},
 \qquad
 [\mathbb M_{\mathbf c}^{\pm,(n)},
  \mathbb N_{\mathbf c}^{\pm,(n)}]=0.
\end{equation}
No additional scalar function or separated Clifford-channel matrix is needed
at higher fermionic occupancy.
\end{proposition}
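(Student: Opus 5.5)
The proof is purely algebraic and uses only three earlier facts: the contraction identities \eqref{eq:finite-seed-identities} of Proposition~\ref{prop:finite-seed-contract}, the Boolean commutant description \eqref{eq:boolean-commutant}, and the definition \eqref{eq:M-explicit} of $\mathbb M^{+,(n)}_{\mathbf c}$ as a sum of branch-constant scalar coefficients. Write $\mathbb Q=\mathbb Q^{+,(n)}_{\mathbf c}$, $\mathbb h=\mathbb h^{+,(n)}_{\mathbf c}$, $\mathbb M=\mathbb M^{+,(n)}_{\mathbf c}$ and $\mathbb N=-\mathbb h\mathbb M$. The minus relations then follow by applying the order-preserving Hermitian conjugation $\ddagger$. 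Since $\ddagger$ is multiplicative in the same order, it carries squares, anticommutators and commutators to the corresponding expressions. So it suffices to treat the plus polarization.

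The first step is to show that $\mathbb M$ commutes with both $\mathbb Q$ and $\mathbb h$. We have $\mathbb M=\delta_m(q)I+\sum_S\mathsf P_S^+\chi_S^{(|S|)}$. The coefficient $\chi_S$ is built from Cartan functions $\kappa_j^+,K_j$ of the fixed charges $c_j$ and from the opposite-Euler factor $B_q^{(k)}(E_{\bar z})$.
\begin{itemize}
\item $\mathbb Q$ and $\mathbb h$ are right-Cartan-linear on each charge block, so they commute with right multiplication by the $\kappa_j^+,K_j$, which are constant on the block.
\item They act only on fermionic slots, so they commute with the bosonic Euler factor.
\item $\mathbb M$ is diagonal and constant on each occupancy branch, so \eqref{eq:boolean-commutant} gives $[\mathbb M,\mathbb Q]=[\mathbb M,\mathbb h]=0$.
\end{itemize}
The point needing care is the meaning of ``constant on a branch''. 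The local arrows of $\mathbb Q$ and $\mathbb h$ map $e_{\vartheta}\leftrightarrow e_0$ and $e_2\leftrightarrow e_{\bar\vartheta}$. Hence $\widehat{\mathsf P}_j^+$, which selects the pair $(e_{\bar\vartheta},e_2)$, commutes with every local arrow, and so does every $\mathsf P_S^+$. One must also check that the Cartan coefficients are compatible with the right-normal shifts. The arrows carry coefficients $\pi_c^\pm$ and $\beta_c$, and moving a Cartan function through a right-normal basis vector could a priori shift its argument. This is the step I expect to be the main obstacle. I would resolve it by noting that $\chi_S$ is a scalar multiplier on the whole charge block. It is applied on the left in the basis expression and is a function only of the charge labels and the bosonic degree, not of the slot, so it commutes with the block-diagonal $\mathbb Q$ and $\mathbb h$. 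If a slot-dependent shift did appear, I would redefine the action slotwise using \eqref{eq:weyl-shifts}, as the branch-rigidity statement (Proposition~\ref{prop:branch-rigidity}) implicitly does.

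Given commutation, the three relations follow.
\begin{align*}
\mathbb N^2&=\mathbb h\mathbb M\mathbb h\mathbb M=\mathbb h^2\mathbb M^2=0,\\
\{\mathbb Q,\mathbb N\}&=-(\mathbb Q\mathbb h\mathbb M+\mathbb h\mathbb M\mathbb Q)=-\{\mathbb Q,\mathbb h\}\mathbb M=-\mathbb M,\\
[\mathbb M,\mathbb N]&=-[\mathbb M,\mathbb h]\mathbb M=0.
\end{align*}
The first line uses $\mathbb h^2=0$ from \eqref{eq:finite-seed-identities}, and the second uses $\{\mathbb Q,\mathbb h\}=I$ from the same display.

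For the final sentence of the statement, Proposition~\ref{prop:branch-rigidity} already fixes the unique defect on each branch. The computation above shows that this branchwise scalar, placed in the Boolean commutant, closes the relations with no further correction. So no additional scalar function or Clifford-channel matrix is required at higher fermionic occupancy.
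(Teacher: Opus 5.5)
Your proposal is correct and follows the paper's proof. Both place $\mathbb M$ in the Boolean commutant \eqref{eq:boolean-commutant}, so that it commutes with $\mathbb Q$ and $\mathbb h$, and then read off the three relations from $\mathbb N=-\mathbb h\mathbb M$, $\mathbb h^2=0$ and $\{\mathbb Q,\mathbb h\}=I$.

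The worry in your third paragraph is already settled by your first bullet, so no slotwise redefinition is needed. In the paper's right-normal convention, $\chi_S$ acts by right multiplication $v\mapsto v\chi_S$, and this commutes automatically with the right-Cartan-linear maps $\mathbb Q$ and $\mathbb h$. Note that $\chi_S$ depends on the $B_{{\rm f},j}$, not only on the charge labels.
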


\begin{proof}
The defect \eqref{eq:M-explicit} belongs to the common diagonal commutant, so it
commutes with the differential and its contracting homotopy.  Combine this
with \eqref{eq:finite-seed-identities} and
$\mathbb N=-\mathbb h\mathbb M$.  The square vanishes because
$\mathbb h^2=0$ and $[\mathbb h,\mathbb M]=0$; the anticommutator is
$-\{\mathbb Q,\mathbb h\}\mathbb M=-\mathbb M$.
\end{proof}

\subsection{$q$-dependent oscillator nonresonance}
The abstract theory is formulated over the Ore localization above, so it does
not require every inverted factor to be nonzero in a particular oscillator
representation.  Nevertheless the $q$-dependent factors are
regular in the natural nonnegative-superdimension range.

\begin{corollary}[$q$-dependent nonresonance]\label{cor:nonresonance}
Assume $m\ge n$.  Then
\[
 0<s_{m|n}^{\rm sym}<2.
\]
On the standard polynomial oscillator
$a=\partial_u$, $b=-2iu$,
$B_{\rm f}u^r=-i(2r+1)u^r$, every shifted $q$-dependent Cartan factor is
nonzero:
\begin{align}
 (\sigma^k\ell_+)(B_{\rm f})u^r
 &=\bigl(s-2+4(k-r)\bigr)u^r\ne0,\\
 (\sigma^k\ell_-)(B_{\rm f})u^r
 &=\bigl(s+2+4(r-k)\bigr)u^r\ne0.
\end{align}
The shifted Weyl factor satisfies
\[
 (\sigma^kd)(B_{\rm f})u^r=-2i(r+1-k)u^r,
\]
so any residual all-charge oscillator resonance belongs only to this
$q$-independent orbit.  In particular, when
$m\ge\max\{2N,n\}$ the $q$-dependent part of the localization in the stable
coordinate theory is nonresonant on every polynomial degree.
\end{corollary}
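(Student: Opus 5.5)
The plan is a direct computation in three steps: first bound the seed parameter, then evaluate each shifted Cartan factor on the monomials $u^r$, and finally use the bound to exclude resonance.

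\emph{Step 1: the bound on $s$.} By \eqref{eq:s-symmetric},
\[
 s_{m|n}^{\rm sym}=Q_{m|n}\,\frac{[2n]_q}{n},
 \qquad
 [2n]_q=\sum_{j=0}^{2n-1}q^j .
\]
Since $0<q<1$, every term of this sum lies in $(0,1]$, and the terms with $j\ge1$ are strictly less than $1$. Hence $0<[2n]_q<2n$. The hypothesis $m\ge n$ gives $0<Q_{m|n}=q^{2m-2n}\le1$. Multiplying these bounds yields $0<s_{m|n}^{\rm sym}<2$.

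\emph{Step 2: the oscillator is admissible and $B_{\rm f}$ is diagonal.} With $a=\partial_u$ and $b=-2iu$ one has $ab-ba=-2i[\partial_u,u]=-2i$, which is exactly \eqref{eq:weyl}. On monomials, $ba\,u^r=-2iu\,\partial_u u^r=-2ir\,u^r$, so
\[
 B_{\rm f}u^r=(ba-i)u^r=-i(2r+1)u^r .
\]
Thus every polynomial in $B_{\rm f}$ acts diagonally on the monomials $u^r$. Substituting the shift $\sigma^k R(B)=R(B+2ik)$ into \eqref{eq:orbit-data} gives the three shifted factors
\[
 \sigma^k\ell_+=s-2iB+4k,\qquad
 \sigma^k\ell_-=s+2iB-4k,\qquad
 \sigma^kd=B+2ik-i .
\]
Using $-2i\cdot(-i)(2r+1)=-2(2r+1)$, their eigenvalues on $u^r$ are
\[
 s-2+4(k-r),\qquad s+2+4(r-k),\qquad -2i(r+1-k).
\]
These are the displayed formulas of the corollary.

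\emph{Step 3: nonresonance.} By Step 1, $s-2\in(-2,0)$ and $s+2\in(2,4)$. Neither interval contains a multiple of $4$, so adding the integer multiples $4(k-r)$ and $4(r-k)$ can never produce zero. Hence the $\ell_\pm$-orbit factors are nonzero for all $k\in\mathbb Z$ and $r\ge0$.

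Each factor is diagonal in the monomial basis, so it is invertible on every polynomial degree. Consequently the $q$-dependent part of $S_s$ acts invertibly on the oscillator. The only factor that can vanish is $\sigma^k d$, namely on the shell $r=k-1$; this factor is independent of $q$.

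Finally, the hypothesis $m\ge\max\{2N,n\}$ contains $m\ge n$, so Steps 1–3 apply verbatim inside the stable coordinate range. This gives the last assertion.

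None of these steps is a genuine obstacle. The only point needing care is Step 1, specifically the strictness $s<2$. It uses the strict inequality $[2n]_q<2n$ (true because $q<1$) together with $Q\le1$. At $s=2$ exactly, the $\ell_\pm$ factors would vanish on the shell $r=k$.
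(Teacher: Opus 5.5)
Your proof is correct and follows the paper's argument: bound $s_{m|n}^{\rm sym}$ using $[2n]_q<2n$ and $0<Q_{m|n}\le1$, substitute the oscillator eigenvalue $B_{\rm f}u^r=-i(2r+1)u^r$, and observe that a zero would require $s\in 2+4\mathbb Z$, which is disjoint from $(0,2)$; you simply supply more detail. One small slip in your closing side remark: at $s=2$ the factor $\sigma^k\ell_+$ vanishes on the shell $r=k$, but $\sigma^k\ell_-$ vanishes on the shell $r=k-1$, not $r=k$.
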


\begin{proof}
Since $0<q<1$, $[2n]_q<2n$, and $m\ge n$ gives
$0<q^{2m-2n}\le1$.  Hence $0<s_{m|n}^{\rm sym}<2$.  Substituting the oscillator
Cartan eigenvalue into the shifted factors yields the displayed formulas.  A
zero of either $q$-dependent coefficient would force
$s\in\{\ldots,-6,-2,2,6,\ldots\}$, disjoint from $(0,2)$.
\end{proof}

\section{Capelli--PBW lift and triangular transport}\label{sec:capelli}

The scalar gauge of Section~\ref{sec:scalar} does not by itself act correctly
on a Clifford PBW word, because the Chevalley symbol of a Clifford product
contains lower exterior grades with additional Gram factors.  We first
construct the normalized label contractions and then use them to insert the
finite Wick correction.

\subsection{Normalized label contractions}
Let $\delta_a$ be the odd left contraction of the holomorphic Witt symbol
algebra, $\delta_a(f_b)=\delta_{ab}$ and
$\delta_a(f_b^\dagger)=0$.  Define
\begin{equation}\label{eq:s-column}
 \mathfrak s_i:=\sum_{a=1}^m\delta_a\partial_{z_{ia}},
 \qquad
 \mathcal E^z_{ji}:=\sum_{a=1}^m z_{ja}\partial_{z_{ia}},
 \qquad
 \mathcal C_z:=mI_N+(\mathcal E^z)^{\mathsf T}.
\end{equation}

\begin{lemma}[Exact label column]\label{lem:label-column}
For every scalar polynomial $F$,
\begin{equation}\label{eq:s-Z}
 \mathfrak s_i(Z_jF)=m\delta_{ij}F+\mathcal E^z_{ji}F.
\end{equation}
In the stable range $m\ge2N$, $\mathcal C_z$ is invertible on every finite
homogeneous polynomial shell and its inverse preserves polynomials.  Thus
\begin{equation}\label{eq:n-column}
 (\mathfrak n_1^+,\ldots,\mathfrak n_N^+)^{\mathsf T}
 :=\mathcal C_z^{-1}(\mathfrak s_1,\ldots,\mathfrak s_N)^{\mathsf T}
\end{equation}
is a column of degree-lowering polynomial endomorphisms satisfying
\begin{equation}\label{eq:n-Z}
 \mathfrak n_i^+(Z_jF)=\delta_{ij}F.
\end{equation}
\end{lemma}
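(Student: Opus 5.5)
The plan is to verify \eqref{eq:s-Z} by a direct computation, then read \eqref{eq:s-Z} as a matrix identity for $\mathcal C_z$, and finally prove invertibility of $\mathcal C_z$ shell by shell through its $\mathfrak{gl}_N$ spectrum.

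\emph{Step 1: the identity \eqref{eq:s-Z}.} Since $F$ is scalar, it commutes with the Witt symbols, and $\partial_{z_{ia}}$ acts only on the polynomial coefficient. Hence
\[
 \partial_{z_{ia}}(Z_jF)=\sum_b f_b\bigl(\delta_{ij}\delta_{ab}F+z_{jb}\partial_{z_{ia}}F\bigr).
\]
Applying $\delta_a$ and using $\delta_a(f_b)=\delta_{ab}$, then summing over $a$, gives $m\delta_{ij}F+\sum_a z_{ja}\partial_{z_{ia}}F$. This is \eqref{eq:s-Z}.

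\emph{Step 2: \eqref{eq:s-Z} as a matrix equation.} Regard $\mathcal C_z$ as an operator on $N$-columns of scalar polynomials, with entries $(\mathcal C_z)_{ik}=m\delta_{ik}+\mathcal E^z_{ki}$. For the column $e_jF$ one has
\[
 \bigl(\mathcal C_z(e_jF)\bigr)_i=m\delta_{ij}F+\mathcal E^z_{ji}F.
\]
So \eqref{eq:s-Z} says exactly that $(\mathfrak s_1,\dots,\mathfrak s_N)^{\mathsf T}(Z_jF)=\mathcal C_z(e_jF)$. Once $\mathcal C_z$ is known to be invertible, applying $\mathcal C_z^{-1}$ yields \eqref{eq:n-Z}.

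\emph{Step 3: shell structure and what remains.} Each $\mathcal E^z_{ji}$ preserves the bihomogeneous degree in $(z,\bar z)$, while each $\mathfrak s_i$ lowers the $z$-degree by one. Consequently $\mathcal C_z$ preserves every finite-dimensional space $\C^N\otimes\scrP_{d,e}$, where $\scrP_{d,e}$ is a bihomogeneous shell. If $\mathcal C_z$ is invertible on each such shell, its inverse is again shell-preserving, and $\mathfrak n^+=\mathcal C_z^{-1}\mathfrak s$ is then a column of polynomial-preserving, degree-lowering maps. Everything therefore reduces to invertibility on a single shell.

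\emph{Step 4: invertibility, the main obstacle.} The entries $\mathcal E^z_{ji}$ do not commute: they realize the polarization action of $\mathfrak{gl}_N$ on the label index of $z$. Write
\[
 \mathcal C_z=m+\Omega,\qquad \Omega=\sum_{i,k}e_{ik}\otimes\mathcal E^z_{ki}\ \text{ on }\ \C^N\otimes\scrP.
\]
By the polarization convention, $\Omega$ is the split Casimir of $\C^N$ (or its dual, depending on the transpose) tensored with the $\mathfrak{gl}_N$-module $\scrP$. By the Casimir identity $2\Omega=C(\C^N\otimes V)-C(\C^N)-C(V)$, it acts on the Pieri component $V_{\lambda\pm e_k}$ of $\C^N\otimes V_\lambda$ by a shifted content. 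The module $\scrP$ decomposes into polynomial $\mathfrak{gl}_N$-types $\lambda$ with $\lambda_N\ge0$; the antiholomorphic variables only contribute multiplicity. Hence these contents lie in $[1-N,\,d]$, with the opposite sign convention giving the analogous range.

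In either convention the eigenvalues of $\mathcal C_z$ are at least $m-N+1>0$ whenever $m\ge 2N$; in fact $m\ge N$ would already suffice. I would first check the sign convention against the case $N=1$, where $\mathcal C_z=m+E$ with $E$ the holomorphic Euler operator, so that $\mathcal C_z=m+d$ on degree $d$. As a fallback, one can triangularize $\mathcal C_z$ using a weight basis and read off the diagonal entries directly.
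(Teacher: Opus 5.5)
Your proposal is correct and follows the paper's proof essentially step for step. You derive \eqref{eq:s-Z} by the Leibniz computation, identify $(\mathcal E^z)^{\mathsf T}$ on $\C^N\otimes F_\lambda$ with the tensor (split) Casimir, and use its Pieri eigenvalue $\lambda_r-r+1$ to bound the spectrum of $\mathcal C_z$ below by $m-N+1>0$ on each finite shell, after which the shell-preserving inverse gives \eqref{eq:n-Z}. The convention hedge in your Step~4 is unnecessary: your own formula $\Omega=\sum_{i,k}e_{ik}\otimes\mathcal E^z_{ki}$ is already the split Casimir of the defining representation, consistent with your check $\mathcal C_z=m+d$ for $N=1$.
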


\begin{proof}
The derivative in \eqref{eq:s-column} either hits $Z_j$, giving
$m\delta_{ij}F$, or $F$, giving $\mathcal E^z_{ji}F$, which proves
\eqref{eq:s-Z}.  It remains to justify the inverse without introducing a
formal matrix denominator.

Fix a homogeneous degree.  The label operators $\mathcal E^z_{ji}$ generate
the natural $\mathfrak{gl}_N$ action, so the finite-dimensional homogeneous
space is a direct sum of polynomial $GL(N)$-modules $F_\lambda$.  On the
column tensor $\C^N\otimes F_\lambda$, the matrix
$(\mathcal E^z)^{\mathsf T}$ is the standard tensor-Casimir operator.  The Pieri decomposition of $\C^N\otimes F_\lambda$ is multiplicity-free,
so this tensor-Casimir is semisimple on the shell.  Its eigenvalue on the
branch obtained by adding a box in row $r$ is
\[
 \lambda_r-r+1,\qquad 1\le r\le N.
\]
(Equivalently, this is the usual Capelli shift for the defining
representation.)  Therefore every eigenvalue of
$\mathcal C_z=mI_N+(\mathcal E^z)^{\mathsf T}$ is
\[
 m+\lambda_r-r+1\ge m-N+1>0.
\]
Thus $\mathcal C_z$ is invertible on each homogeneous shell.  Since each
shell is finite dimensional, its inverse is a finite spectral polynomial in
$\mathcal C_z$ on that shell and consequently maps polynomials to
polynomials.  Multiplying the exact column relation \eqref{eq:s-Z} by this
inverse proves \eqref{eq:n-Z}.
\end{proof}

Let $\mathscr B^+$ denote the subspace annihilated by all $\delta_a$; it
contains scalars and standard words formed only from daggered Witt symbols.
For an ordered label set $I=(i_1<\cdots<i_r)$ define the abstract Grassmann
label contraction
\begin{equation}\label{eq:abstract-c}
 \mathfrak c_i^+(Z_IB)
 :=\sum_{t=1}^r(-1)^{t-1}\delta_{i,i_t}
 Z_{i_1}\cdots\widehat{Z_{i_t}}\cdots Z_{i_r}B,
 \qquad B\in\mathscr B^+.
\end{equation}

\begin{theorem}[Capelli--PBW factorization]\label{thm:capelli-pbw}
On the standard plus-polarized radial PBW module,
\begin{equation}\label{eq:capelli-factor}
 \mathfrak s_i
 =\sum_{\ell=1}^N
  (m\delta_{i\ell}+\mathcal E^z_{\ell i})\mathfrak c_\ell^+.
\end{equation}
Consequently, for $m\ge2N$,
\begin{equation}\label{eq:n-c}
 \mathfrak n_i^+=\mathfrak c_i^+
\end{equation}
on every standard radial exterior word.  The order-preserving Hermitian
conjugate gives the corresponding statement for $\mathfrak n_i^-$.
\end{theorem}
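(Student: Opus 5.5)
Since $\mathfrak s_i$ is an odd derivation-type operator (an odd contraction $\delta_a$ composed with a first-order derivative), I will prove \eqref{eq:capelli-factor} on a standard word $Z_IZ_J^\dagger H^\alpha$ by expanding $\mathfrak s_i$ over the factors. Write $Z_I=Z_{i_1}\cdots Z_{i_r}$. The odd contraction $\delta_a$ acts as a graded derivation on the holomorphic Witt symbols and kills every $f_b^\dagger$. The scalar derivative $\partial_{z_{ia}}$ acts by the Leibniz rule.

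The pairing of $\delta_a$ with $\partial_{z_{ia}}$ then gives two kinds of terms.

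\emph{Diagonal terms.} Here both $\delta_a$ and $\partial_{z_{ia}}$ hit the same factor $Z_{i_t}$. This yields $(-1)^{t-1}m\,\delta_{i,i_t}$ times the word with $Z_{i_t}$ removed. This is exactly the $m\,\mathfrak c_i^+$ part.

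\emph{Off-diagonal terms.} Here $\delta_a$ removes $f_a$ from $Z_{i_t}$, leaving the coefficient $z_{i_t a}$, while $\partial_{z_{ia}}$ hits the remaining word $W_t:=Z_{i_1}\cdots\widehat{Z_{i_t}}\cdots Z_{i_r}Z_J^\dagger H^\alpha$. The sum over $a$ of such a term is $(-1)^{t-1}z_{i_ta}\partial_{z_{ia}}W_t$, with the derivative acting on $W_t$ only. However, $\mathcal E^z_{i_t i}$ applied to $W_t$ gives precisely $\sum_a z_{i_ta}\partial_{z_{ia}}W_t$, because $\mathcal E^z$ is an even derivation and commutes past the Witt symbols. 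Summing over $t$ and regrouping by $\ell=i_t$ therefore gives $\sum_\ell\mathcal E^z_{\ell i}\mathfrak c_\ell^+$.

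The one subtlety is signs and ordering. The derivative must be applied after the removal, so the factorization is $(m\delta_{i\ell}+\mathcal E^z_{\ell i})\circ\mathfrak c^+_\ell$ rather than the reverse order. I will check that the $\partial_{z_{ia}}$ hitting $Z_{i_t}$ itself, combined with $\delta_a$ hitting a different factor $Z_{i_u}$, is accounted for. It is: such a term equals the $u$-th removal followed by $\mathcal E^z_{i_u i}$ acting on the $Z_{i_t}$ factor inside $W_u$. So the off-diagonal terms are exactly the $\mathcal E^z$ acting on all of $W_u$, including its remaining $Z$ factors, with no overcounting. The factors $Z_J^\dagger$ and $H^\alpha$ contribute only through $\mathcal E^z$, since $\delta_a$ kills them. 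This gives \eqref{eq:capelli-factor}, which is the column identity $\mathfrak s=\mathcal C_z\,\mathfrak c^+$ in matrix form.

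To deduce \eqref{eq:n-c}, apply $\mathcal C_z^{-1}$, which is well-defined and polynomial-preserving by Lemma~\ref{lem:label-column} for $m\ge2N$. Then $\mathfrak n^+=\mathcal C_z^{-1}\mathfrak s=\mathcal C_z^{-1}\mathcal C_z\mathfrak c^+=\mathfrak c^+$.

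The main obstacle is in this last step. It requires $\mathcal C_z$ to act on the image vectors of $\mathfrak c^+$, which are words in the module and not scalar polynomials only. I will argue that $\mathcal E^z$ preserves each homogeneous shell of $\scrP^{\rm Cl}_{m,N}$, so the Pieri spectral argument of Lemma~\ref{lem:label-column} applies verbatim on the Clifford-valued shell. The needed input is that the $\mathfrak{gl}_N$-action there is still polynomial of the same degree, with Clifford symbols as inert multiplicity. The minus statement then follows by applying the order-preserving conjugation $\ddagger$.
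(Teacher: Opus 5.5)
Your proposal is correct. The inversion step is the paper's, but you prove the factorization \eqref{eq:capelli-factor} by a different route.

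\textbf{The paper's route.} It never expands $\mathfrak s_i$ on a word. It introduces the left multiplications $L_j$ by $Z_j$ and the counting operator $\mathcal K^+=\sum_\ell L_\ell\mathfrak c_\ell^+$. It then checks that $\mathfrak s_i$ and $\mathfrak r_i:=\sum_\ell(m\delta_{i\ell}+\mathcal E^z_{\ell i})\mathfrak c_\ell^+$ obey the same relation $\{\,\cdot\,,L_j\}=m\delta_{ij}+\mathcal E^z_{ji}-\delta_{ij}\mathcal K^+$, observes that both kill $\mathscr B^+$, and concludes by induction on the number of holomorphic factors.

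\textbf{Your route.} You use that $\delta_a$ is an odd derivation commuting with $\partial_{z_{ia}}$. Then $\delta_a$ removes $Z_{i_t}$ with sign $(-1)^{t-1}$ and leaves $z_{i_ta}$, after which $\sum_a\partial_{z_{ia}}$ produces $m\delta_{i,i_t}+\mathcal E^z_{i_ti}$ acting on $W_t$. This gives the closed formula $\mathfrak s_i(Z_IB)=\sum_t(-1)^{t-1}(m\delta_{i,i_t}+\mathcal E^z_{i_ti})(Z_{I\setminus\{i_t\}}B)$ in one step, for every $B$ with $\delta_aB=0$. It needs no induction. It also avoids identifying the Leibniz term $\sum_af_a\delta_a$ with $\mathcal K^+$. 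The paper's relation uses that identification tacitly, and it is valid on the sector $Z_I\mathscr B^+$ where $\mathfrak c^+$ is defined. In exchange, the paper's argument characterizes $\mathfrak s_i$ intrinsically by its brackets with the generators $L_j$ and its vanishing on $\mathscr B^+$, with no term-by-term sign bookkeeping.

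\textbf{Inversion step.} You make explicit a point the paper leaves tacit. Lemma~\ref{lem:label-column} proves invertibility of $\mathcal C_z$ only on scalar shells. This transfers to Clifford-valued columns because $\mathcal E^z$ acts coefficientwise, so $\mathcal C_z$ is the scalar matrix tensored with $\mathrm{id}_{\Cl}$. All entries $\mathfrak c_\ell^+W$ have the same holomorphic degree, so one shellwise inverse serves the whole column.

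\textbf{Minus polarization.} Your direct computation adapts verbatim to show that $\mathfrak n_j^-$ is the Grassmann contraction on the plus-ordered words $Z_IZ_J^\dagger H^\alpha$ themselves. The conjugate contraction and $\partial_{\bar z}$ pass $Z_I$ with only the sign $(-1)^{|I|}$. This is slightly more than invoking $\ddagger$ gives, since $\ddagger$ sends $Z_IZ_J^\dagger$ to the oppositely ordered $Z_I^\dagger Z_J$.
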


\begin{proof}
Let $L_j$ be left multiplication by $Z_j$ and
$\mathcal K^+:=\sum_\ell L_\ell\mathfrak c_\ell^+$.  The graded Leibniz rule
gives
\begin{equation}\label{eq:sL}
 \mathfrak s_iL_j+L_j\mathfrak s_i
 =m\delta_{ij}+\mathcal E^z_{ji}-\delta_{ij}\mathcal K^+.
\end{equation}
Put
$\mathfrak r_i:=\sum_\ell(m\delta_{i\ell}+\mathcal E^z_{\ell i})
\mathfrak c_\ell^+$.  Using
$[\mathcal E^z_{\ell i},L_j]=\delta_{ij}L_\ell$ and
$\mathfrak c_\ell^+L_j+L_j\mathfrak c_\ell^+=\delta_{\ell j}$ shows that
$\mathfrak r_i$ obeys the same relation \eqref{eq:sL}.  Both
$\mathfrak s_i$ and $\mathfrak r_i$ annihilate $\mathscr B^+$, hence they
coincide by induction on the number of holomorphic standard factors.
Multiplying the column identity by $\mathcal C_z^{-1}$ proves
\eqref{eq:n-c}.
\end{proof}

In particular, all finite exterior overlaps are already encoded by the
canonical label contraction.  For example,
\[
 \mathfrak n_i^+(Z_jZ_kF)
 =\delta_{ij}Z_kF-\delta_{ik}Z_jF,
 \qquad
 \mathfrak n_i^+(Z_jZ_k^\dagger F)=\delta_{ij}Z_k^\dagger F.
\]
No additional two-label curvature term is needed.

\subsection{Finite Wick correction}
Let $\sigma$ be the Chevalley symbol isomorphism from the complex Clifford
algebra to the exterior algebra on the Witt symbols.  In the exterior-symbol
model define
\begin{equation}\label{eq:Kwick}
 \mathbb K_{m,N}
 :=\frac12\sum_{i,j=1}^N
 L_{H_{ij}}\,\mathfrak n_j^-\mathfrak n_i^+,
 \qquad
 \mathbb W_{m,N}:=\sigma^{-1}\exp(\mathbb K_{m,N})\sigma.
\end{equation}
The exponential is finite because $\mathbb K_{m,N}$ lowers total exterior
degree by two.

\begin{lemma}[Wick realization]\label{lem:wick}
For ordered label sets $I,J$ and every Gram monomial $H^\alpha$,
\begin{equation}\label{eq:wick-radial}
 \sigma(Z_IZ_J^\dagger H^\alpha)
 =\exp(\mathbb K_{m,N})
  ((Z_I\wedge Z_J^\dagger)H^\alpha).
\end{equation}
\end{lemma}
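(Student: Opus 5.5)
The plan is to prove \eqref{eq:wick-radial} by induction on $|I|$, peeling off the leftmost holomorphic factor. The input is the standard Chevalley rule for left Clifford multiplication by a vector $v$:
\[
 \sigma(v\,x)=v\wedge\sigma(x)+\iota_v\sigma(x),
\]
where $\iota_v$ is the odd contraction with the symmetric form $\langle\cdot,\cdot\rangle$ determined by \eqref{eq:witt}. That form has $\langle f_a,f_b^\dagger\rangle=\tfrac12\delta_{ab}$ and $\langle f_a,f_b\rangle=\langle f_a^\dagger,f_b^\dagger\rangle=0$. For $v=Z_i$ this gives $\langle Z_i,Z_j^\dagger\rangle=\tfrac12H_{ij}$, and $\iota_{Z_i}$ sees only daggered Witt symbols. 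On standard words $\iota_{Z_i}$ is therefore the operator $\tfrac12\sum_jL_{H_{ij}}\mathfrak c_j^-$. By Theorem~\ref{thm:capelli-pbw} and its Hermitian conjugate, this equals $\tfrac12\sum_jL_{H_{ij}}\mathfrak n_j^-$ on every standard radial exterior word.

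\emph{Base case.} For $I=\varnothing$ the symbol $\sigma(Z_J^\dagger H^\alpha)$ is just $Z_J^\dagger H^\alpha$ in wedge form, because daggered Witt symbols anticommute in the Clifford algebra. On the other side, $\exp(\mathbb K_{m,N})$ acts as the identity: every term of $\mathbb K_{m,N}$ ends with some $\mathfrak n_i^+=\mathfrak c_i^+$, and these kill words with no holomorphic factor.

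\emph{Key commutator.} On the span of standard words I will show
\[
 \exp(\mathbb K_{m,N})\,L_i=\Bigl(L_i+\tfrac12\sum_jL_{H_{ij}}\mathfrak n_j^-\Bigr)\exp(\mathbb K_{m,N}),
\]
with $L_i$ the exterior multiplication by $Z_i$. The proof is a short computation.
\begin{itemize}
\item From $\mathfrak c_\ell^+L_i+L_i\mathfrak c_\ell^+=\delta_{\ell i}$, from the anticommutation $\mathfrak c_j^-L_i=-L_i\mathfrak c_j^-$ (opposite polarizations), and from the fact that Gram multiplications commute with everything even, one gets
\[
 [\mathbb K_{m,N},L_i]=\tfrac12\sum_j L_{H_{ij}}\mathfrak c_j^- .
\]
\item The right-hand side commutes with $\mathbb K_{m,N}$, since it contains only antiholomorphic contractions. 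These anticommute among themselves, so they commute with the even pairs $\mathfrak n_j^-\mathfrak n_i^+$ up to signs that cancel in pairs.
\item Hence the series $\exp(\operatorname{ad}\mathbb K)$ truncates after the linear term.
\end{itemize}

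\emph{Inductive step.} Write $Z_I=Z_{i_1}Z_{I'}$. Apply the Chevalley rule to $Z_{i_1}\cdot(Z_{I'}Z_J^\dagger H^\alpha)$ and use the inductive hypothesis for $I'$. The result is exactly $\exp(\mathbb K_{m,N})\bigl(L_{i_1}(Z_{I'}\wedge Z_J^\dagger)H^\alpha\bigr)$ by the displayed intertwining. Gram factors are scalars for the Clifford structure and commute through.

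\emph{Main obstacle.} The difficulty is not combinatorial. It is justifying that the Capelli-normalized operators $\mathfrak n_j^\pm$ may be replaced by the abstract contractions $\mathfrak c_j^\pm$ on every intermediate word produced by $\exp(\mathbb K)$. These words have the form $Z_{I''}\wedge Z_{J''}^\dagger$ times Gram monomials, which is precisely the standard radial PBW sector covered by \eqref{eq:n-c}. I would check this closure explicitly, since $\mathbb K$ multiplies by $H_{ij}$ and removes one holomorphic and one antiholomorphic label, so it stays in the sector. I would then fix the sign convention in $\mathfrak n_j^-\mathfrak n_i^+$ against $\iota_{Z_i}$ on a two-label example such as $Z_iZ_j^\dagger=Z_i\wedge Z_j^\dagger+\tfrac12H_{ij}$.
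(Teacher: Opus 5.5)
Your proposal is correct. It shares its first step with the paper and reaches the conclusion by a different route in the second step.

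Both arguments begin the same way. By Theorem~\ref{thm:capelli-pbw} and its conjugate, $\mathfrak n_i^\pm$ coincide with the abstract contractions $\mathfrak c_i^\pm$ on standard radial exterior words, so $\mathbb K_{m,N}=\frac12\sum_{i,j}H_{ij}\mathfrak c_j^-\mathfrak c_i^+$ there. The paper then expands $\exp(\mathbb K_{m,N})$ as a sum over disjoint mixed pairings with Grassmann signs and appeals to the known Clifford Wick formula.

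You instead prove that formula in operator form. The intertwining identity $\exp(\mathbb K_{m,N})\,L_i\,\exp(-\mathbb K_{m,N})=L_i+\iota_{Z_i}$ holds on the standard span, where $\iota_{Z_i}=\frac12\sum_jH_{ij}\mathfrak c_j^-$; the series truncates because $[\mathbb K_{m,N},\iota_{Z_i}]=0$. Combined with the Chevalley recursion $\sigma(Z_ix)=Z_i\wedge\sigma(x)+\iota_{Z_i}\sigma(x)$, this gives the lemma by induction on $|I|$.

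What your route buys is self-containedness. It derives the factor $\frac12$ from $\langle Z_i,Z_j^\dagger\rangle=\frac12H_{ij}$ and handles all pairing signs automatically through graded commutators. The paper's proof is shorter but leaves the normalization and the sign matching with the Wick expansion to the reader.

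Two small remarks on your write-up:
\begin{itemize}
\item The closure you flag as the main obstacle is immediate. $L_i$, $\mathfrak c_i^\pm$ and multiplication by $H_{ij}$ each map the span of words $(Z_{I''}\wedge Z_{J''}^\dagger)H^{\alpha'}$ into itself, so the replacement $\mathfrak n=\mathfrak c$ is legitimate at every intermediate stage.
\item In your second bullet, also cite $\{\mathfrak c_j^-,\mathfrak c_i^+\}=0$, not only the anticommutation among the antiholomorphic contractions. That is what lets $\iota_{Z_i}$ commute with the even pairs $\mathfrak c_{l}^-\mathfrak c_{k}^+$.
\end{itemize}
The two-label sign check you propose does come out consistent: $\exp(\mathbb K_{m,N})(Z_i\wedge Z_j^\dagger)=Z_i\wedge Z_j^\dagger+\frac12H_{ij}=\sigma(Z_iZ_j^\dagger)$.
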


\begin{proof}
By Theorem~\ref{thm:capelli-pbw} and its conjugate,
$\mathfrak n_i^+$ and $\mathfrak n_j^-$ are the canonical label contractions
on every standard radial word.  Thus $\mathbb K_{m,N}$ contracts one
holomorphic and one antiholomorphic exterior factor and inserts one half of
their scalar anticommutator $H_{ij}$.  Expanding the finite exponential sums
over disjoint mixed pairings with the Grassmann signs, which is precisely the
Clifford Wick formula.
\end{proof}

Let $\mathbb G_{m,N}^{\rm nf,\Lambda}$ be the scalar normal-form gauge acting
coefficientwise in the exterior-symbol basis and put
\begin{equation}\label{eq:full-pbw-gauge}
 \mathbb G_{m,N}^{\rm PBW}
 :=\mathbb W_{m,N}\mathbb G_{m,N}^{\rm nf,\Lambda}
   \mathbb W_{m,N}^{-1}.
\end{equation}

\begin{theorem}[Full PBW gauge]\label{thm:full-pbw}
For $m\ge2N$, $\mathbb G_{m,N}^{\rm PBW}$ is a degree-preserving polynomial
automorphism of the full Clifford-valued coordinate polynomial space and
\begin{equation}\label{eq:PBW-radial}
 \mathbb G_{m,N}^{\rm PBW}(Z_IZ_J^\dagger H^\alpha)
 =g_\alpha Z_IZ_J^\dagger H^\alpha
\end{equation}
on every represented radial PBW monomial.  It is $U(m)$- and label-covariant,
support-compatible in the stable range, and becomes the identity at $q=1$.
\end{theorem}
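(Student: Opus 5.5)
We prove Theorem~\ref{thm:full-pbw} by factoring $\mathbb G_{m,N}^{\rm PBW}=\mathbb W_{m,N}\mathbb G_{m,N}^{\rm nf,\Lambda}\mathbb W_{m,N}^{-1}$ and checking each property on each factor. The outer factors are $\sigma^{\pm1}$, which are linear isomorphisms. Hence it suffices to work in the exterior-symbol model $\scrP^{\rm sc}_{m,N}\otimes\Lambda(f,f^\dagger)$. There we show three things:
\begin{enumerate}
\item $\exp(\pm\mathbb K_{m,N})$ is a degree-preserving polynomial automorphism;
\item $\mathbb G^{\rm nf,\Lambda}_{m,N}$, acting coefficientwise on a basis of exterior words in $f_a,f_a^\dagger$, is one as well;
\item the conjugate acts on radial PBW monomials by the scalar $g_\alpha$.
\end{enumerate}

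\emph{Automorphism properties.} For $\mathbb G^{\rm nf,\Lambda}_{m,N}$, Proposition~\ref{prop:scalar-lift} gives that $\mathbb G^{\rm nf}_{m,N}$ is a degree-preserving scalar automorphism for $m\ge2N$. Tensoring it with the identity on the finite-dimensional exterior algebra keeps this property.

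For $\mathbb K_{m,N}$, each $\mathfrak n_i^\pm$ is a degree-lowering polynomial endomorphism by Lemma~\ref{lem:label-column} and its conjugate. Multiplication by $H_{ij}$ raises degree by two. So $L_{H_{ij}}\mathfrak n_j^-\mathfrak n_i^+$ preserves total polynomial degree, counting each Witt symbol together with its coordinate factor as in $Z_k$ and $Z_k^\dagger$. It also lowers exterior degree by two. Therefore $\mathbb K_{m,N}$ is nilpotent of order at most $m+1$. The series $\exp(\pm\mathbb K_{m,N})$ terminate, are mutually inverse, and preserve polynomials and degree. The same then holds for $\mathbb W_{m,N}^{\pm1}$, and hence for the composite.

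\emph{Key identity \eqref{eq:PBW-radial}.} Lemma~\ref{lem:wick} gives
\[
\mathbb W_{m,N}^{-1}(Z_IZ_J^\dagger H^\alpha)=(Z_I\wedge Z_J^\dagger)H^\alpha
\]
in symbol form, with $\mathbb W_{m,N}^{-1}=\sigma^{-1}\exp(-\mathbb K_{m,N})\sigma$. The main obstacle is the next step: showing that
\[
\mathbb G^{\rm nf,\Lambda}_{m,N}\bigl((Z_I\wedge Z_J^\dagger)H^\alpha\bigr)=g_\alpha(Z_I\wedge Z_J^\dagger)H^\alpha.
\]
This is not automatic, because the gauge acts coefficientwise in the basis of exterior words in $f_a,f_a^\dagger$, and not in the label words $Z_I\wedge Z_J^\dagger$.

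To handle it, expand $Z_I\wedge Z_J^\dagger=\sum_{A,B}\det(z_{I,A})\det(\bar z_{J,B})\,f_A\wedge f_B^\dagger$. The coefficient of $f_A\wedge f_B^\dagger$ is then $H^\alpha$ times a product of two minors. A single holomorphic or antiholomorphic minor is annihilated by every mixed contraction $\Delta_{ij}$, since $\Delta_{ij}$ needs both a $z$ and a $\bar z$ derivative. But the product of a $z$-minor and a $\bar z$-minor is generally not mixed-harmonic: its $\Delta_{ij}$-contractions produce lower minors times Gram-type terms. So the naive claim fails on individual coefficients.

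I would therefore try a different route. Instead of expanding coefficients, the plan is to establish the statement $\mathfrak n$-equivariantly. First show that $\mathbb G^{\rm nf,\Lambda}_{m,N}$ commutes with left exterior multiplication by the symbols $Z_i$ and $Z_j^\dagger$ modulo terms that $\exp(\mathbb K_{m,N})$ exactly compensates. Equivalently, prove directly that $\mathbb G^{\rm PBW}_{m,N}$ commutes with left multiplication by $Z_i$ and $Z_j^\dagger$ on the radial PBW sector. Then induct on $|I|+|J|$. The base case is $\mathbb G^{\rm PBW}_{m,N}H^\alpha=g_\alpha H^\alpha$, which holds because $\mathbb K_{m,N}$ kills scalars and $H^\alpha$ has harmonic part $1$. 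The inductive step is where the identity must be verified; I would check it using the Grassmann identity $\mathfrak n^\pm=\mathfrak c^\pm$ of Theorem~\ref{thm:capelli-pbw}.

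\emph{Remaining properties.} Covariance follows because $\sigma$, $\mathbb K_{m,N}$ (built from $U(m)$-invariant contractions and the matrix $H$) and the normal form of Theorem~\ref{thm:stable-separation} are all $U(m)$- and label-covariant. Support compatibility comes from the stable-range compatibility in that theorem. At $q=1$ we have $g_\alpha=1$, so $\mathbb G^{\rm nf}_{m,N}=I$ and $\mathbb G^{\rm PBW}_{m,N}=\mathbb W_{m,N}\mathbb W_{m,N}^{-1}=I$.
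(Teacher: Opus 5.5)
\textbf{Gap: the key identity \eqref{eq:PBW-radial} is left unproved, and it is actually false for the operator as defined.} Your treatment of invertibility, degree preservation, covariance and the $q=1$ limit matches the paper's (finite Wick exponential plus the finite-degree scalar gauge) and is fine. You are right that $\mathbb G^{\rm nf,\Lambda}_{m,N}$ acts on the coefficients of $f_A\wedge f_B^\dagger$, not on label words. But your replacement argument (commutation of $\mathbb G^{\rm PBW}_{m,N}$ with left multiplication by $Z_i,Z_j^\dagger$, then induction) is only announced, and it cannot succeed.

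Take $N=1$, $I=J=\{1\}$, $\alpha=0$. We have $\sigma(ZZ^\dagger)=Z\wedge Z^\dagger+\tfrac12H$, and $\mathbb K_{m,1}$ lowers exterior degree by two. So the $f_a\wedge f_a^\dagger$-coefficient of the symbol of $\mathbb W_{m,1}^{-1}(ZZ^\dagger)$ is
\[
 z_a\bar z_a=(z_a\bar z_a-H/m)+H/m,
\]
whose first summand is mixed-harmonic. Hence, whatever the sign conventions in Lemma~\ref{lem:wick}, applying $\mathbb G^{\rm nf,\Lambda}_{m,1}-I$ to this symbol gives $f_a\wedge f_a^\dagger$-component
\[
 (g_1-1)\frac{H}{m}\neq0,\qquad g_1=\frac{2}{1+q}.
\]
Since $\mathbb G^{\rm PBW}_{m,1}-I=\mathbb W_{m,1}(\mathbb G^{\rm nf,\Lambda}_{m,1}-I)\mathbb W_{m,1}^{-1}$ with $\mathbb W_{m,1}$ invertible, it follows that $\mathbb G^{\rm PBW}_{m,1}(ZZ^\dagger)\neq ZZ^\dagger=g_0ZZ^\dagger$ for every $m\ge2$ and $0<q<1$.

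Because $\mathbb G^{\rm PBW}_{m,1}(Z^\dagger)=Z^\dagger$, your proposed commutation with left multiplication by $Z$ already fails at the first inductive step. No Wick term can compensate, since conjugation by the invertible $\mathbb W_{m,1}$ cannot remove a nonzero discrepancy. Crossed labels fail as well: for $Z_1Z_2^\dagger H_{12}$ the coefficient $z_{1a}\bar z_{2a}H_{12}$ has an $H_{12}^2/m$ part, and $\omega_{\rm c}(2)\neq\omega_{\rm c}(1)$.

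For comparison, the paper's proof follows exactly the route you rejected. After Lemma~\ref{lem:wick} it simply asserts that the middle scalar gauge multiplies the pure exterior word $(Z_I\wedge Z_J^\dagger)H^\alpha$ by $g_\alpha$. Your objection is therefore correct and locates an actual error in that argument, not merely an omitted verification. Changing the exterior basis does not help either, because a coefficientwise action of a linear map is independent of the constant basis used.

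To obtain \eqref{eq:PBW-radial}, one would need a different middle gauge. It should be defined through a normal form on exterior-valued polynomials in which the label words $Z_I\wedge Z_J^\dagger$ carry no Gram content, so that, for example, the trace part $\frac{H}{m}\sum_a f_a\wedge f_a^\dagger$ of $Z\wedge Z^\dagger$ is not reweighted separately. Polynomiality and invertibility would then have to be rechecked for that gauge. Neither your proposal nor the paper supplies this.
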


\begin{proof}
Polynomiality and invertibility follow from the finite Wick exponential and
the finite-degree scalar gauge.  Apply $\mathbb W_{m,N}^{-1}$ to a radial PBW
word and use Lemma~\ref{lem:wick}; the result is the pure exterior word
$(Z_I\wedge Z_J^\dagger)H^\alpha$.  The middle scalar gauge multiplies it by
$g_\alpha$, and the final Wick map reconstructs the original Clifford PBW
word.
\end{proof}

\subsection{Common triangular transport}
For each bosonic label $k$, let $\mathscr F_k$ be the finite-support
fermionic charge module of Section~\ref{sec:fermionic}.  Suppress the charge
indices and write its plus-polarized seed operators as
$\mathbb Q_k^+,\mathbb N_k^+$; the minus-polarized operators are their conjugates.  On the
graded tensor product define the ungauged flat super-Dirac seeds
\begin{equation}\label{eq:Ak}
 \mathbb A_k^+
 :=\partial_{Z_k}\widehat\otimes I+I\widehat\otimes\mathbb Q_k^+,
 \qquad
 \mathbb A_k^-:=(\mathbb A_k^+)^\ddagger.
\end{equation}
They satisfy
\begin{equation}\label{eq:A-flat}
 (\mathbb A_k^\pm)^2=0,
 \qquad
 \{\mathbb A_i^\pm,\mathbb A_j^\pm\}=0\quad(i\ne j).
\end{equation}
The notation $\mathfrak n_i^+\widehat\otimes\mathbb N_i^+$ is
operator-valued: $\mathbb N_i^+$ is the odd finite-support homotopy from
Section~\ref{sec:fermionic}, including its right-normal coefficient depending
on the opposite Euler shell.  Thus the tensor sign records the graded
fermionic action, while the displayed spectral coefficient acts on the local
bosonic Euler degree.  This convention is polynomial-preserving on the
bosonic coordinate module over the localized fermionic coefficient ring.
Set
\begin{equation}\label{eq:H-U}
 \mathbb H_q^+:=\sum_{i=1}^N
 \mathfrak n_i^+\widehat\otimes\mathbb N_i^+,
 \qquad
 \mathbb U_q^+:=\exp(\mathbb H_q^+),
\end{equation}
and define $\mathbb H_q^-,\mathbb U_q^-$ by Hermitian conjugation.  The
exponentials are finite on every polynomial--finite-support block because the
fermionic homotopies are filtration increasing and nilpotent.

\begin{proposition}[Triangular flatness]\label{prop:tri-flat}
The operators
\begin{equation}\label{eq:tri-D}
 \mathbb B_{k,q}^\pm:=(\mathbb U_q^\pm)^{-1}
  \mathbb A_k^\pm\mathbb U_q^\pm
\end{equation}
are polynomial-preserving and obey the same-polarization relations
\begin{equation}\label{eq:B-flat}
 (\mathbb B_{k,q}^\pm)^2=0,
 \qquad
 \{\mathbb B_{i,q}^\pm,\mathbb B_{j,q}^\pm\}=0\quad(i\ne j).
\end{equation}
Their linear boundary correction is label diagonal.
\end{proposition}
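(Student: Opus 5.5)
The plan is to reduce everything to conjugation invariance plus the flatness \eqref{eq:A-flat} of the seeds. The operators $\mathbb B_{k,q}^\pm$ are defined as conjugates of $\mathbb A_k^\pm$ by one and the same invertible map $\mathbb U_q^\pm$ for all labels $k$ in a given polarization. Hence
\[
 (\mathbb B_{k,q}^\pm)^2=(\mathbb U_q^\pm)^{-1}(\mathbb A_k^\pm)^2\mathbb U_q^\pm=0,
 \qquad
 \{\mathbb B_{i,q}^\pm,\mathbb B_{j,q}^\pm\}=(\mathbb U_q^\pm)^{-1}\{\mathbb A_i^\pm,\mathbb A_j^\pm\}\mathbb U_q^\pm=0 .
\]
So \eqref{eq:B-flat} follows immediately once two things are in place: \eqref{eq:A-flat} must hold, and $\mathbb U_q^\pm$ must be a well-defined, polynomial-preserving, invertible operator.

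\textbf{Step 1: flatness of the seeds.} For \eqref{eq:A-flat}, I would expand $(\mathbb A_k^+)^2$ on the graded tensor product. The three contributions are as follows.
\begin{itemize}
\item $\partial_{Z_k}^2=0$, by the Witt relations \eqref{eq:witt}.
\item $(\mathbb Q_k^+)^2=0$, by \eqref{eq:finite-seed-identities}.
\item The cross term $\{\partial_{Z_k}\widehat\otimes I,\,I\widehat\otimes\mathbb Q_k^+\}$ vanishes, because odd operators acting in different graded tensor factors anticommute.
\end{itemize}
The mixed-label anticommutators vanish for the same three reasons, using $\{\partial_{Z_i},\partial_{Z_j}\}=0$ and the anticommutation of the relevant fermionic seeds. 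For the minus polarization, I would apply the order-preserving conjugation $\ddagger$, which respects products by \eqref{eq:ddagger-order}.

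\textbf{Step 2: $\mathbb U_q^\pm$ is a well-defined polynomial automorphism.} This is the main obstacle. I would use a filtration on each polynomial--finite-support block, namely bosonic degree together with fermionic occupancy.
\begin{itemize}
\item Each $\mathfrak n_i^+$ strictly lowers holomorphic degree, by Lemma~\ref{lem:label-column}.
\item Each $\mathbb N_i^+=-\mathbb h^+\mathbb M^+$ is built from the contracting homotopy. That homotopy only moves slots $e_{0}\to e_\vartheta$ and $e_{\bar\vartheta}\to e_2$, so it raises local occupancy, and it squares to zero by \eqref{eq:aggregate-N-rel}.
\end{itemize}
Consequently $\mathbb H_q^+$ is nilpotent on each block, with nilpotency order bounded by the finite fermionic support $n$ together with the degree. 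The exponential is then a finite sum, and its inverse is $\exp(-\mathbb H_q^+)$.

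Polynomiality needs two further checks. First, $\mathcal C_z^{-1}$ preserves polynomials shellwise (Lemma~\ref{lem:label-column}). Second, the spectral coefficient $B_q^{(k)}(E_{\bar z})$ in $\mathbb M^+$ acts diagonally on Euler shells with finite values. The Cartan coefficients live in $\mathcal A_s^{\min}$, which is exactly the ring of Theorem~\ref{thm:min-local}.

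\textbf{Step 3: the boundary correction is label diagonal.} I would compute the first-order term $[\mathbb A_k^+,\mathbb H_q^+]$ of the conjugation series. Since $\partial_{Z_k}$ commutes with $\mathfrak n_i^+$ up to terms that are handled on radial words, the main contribution is
\[
 \sum_i \mathfrak n_i^+\widehat\otimes\{\mathbb Q_k^+,\mathbb N_i^+\}.
\]
By \eqref{eq:aggregate-N-rel}, only $i=k$ yields the defect $-\mathbb M_k^+$; for $i\ne k$ the operators act in independent fermionic factors, so the anticommutator vanishes. The linear correction is therefore $-\mathfrak n_k^+\widehat\otimes\mathbb M_k^+$, which involves only label $k$. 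This is the part I expect to need the most care: I must verify that the cross-label pieces truly vanish, and that $[\partial_{Z_k},\mathfrak n_i^+]$ contributes nothing at linear order. For the latter I would invoke the Grassmann identity $\mathfrak n_i^+=\mathfrak c_i^+$ from Theorem~\ref{thm:capelli-pbw} on standard words.
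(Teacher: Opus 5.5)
Your proposal is correct and follows the paper's argument: \eqref{eq:B-flat} is the common conjugate of \eqref{eq:A-flat} under the single finite, invertible, polynomial-preserving exponential $\mathbb U_q^\pm$, and label diagonality is read off on linear inputs. The paper phrases that last step through \eqref{eq:n-Z}: on $Z_je$ only $\mathfrak n_j^+$ acts, and all terms of filtration $\ge2$ vanish. Your Step~3 closes once you also note that these higher conjugation terms, and the bracket $\{\partial_{Z_k},\mathfrak n_i^+\}$ (an anticommutator, not a commutator), both vanish on linear inputs.
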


\begin{proof}
The finite exponential is an invertible polynomial-preserving endomorphism
of the bosonic coordinate module over the localized fermionic coefficient
ring, so \eqref{eq:B-flat} is the common conjugate of \eqref{eq:A-flat}.  Naturality of
the Capelli column and the seed construction give label covariance.  Equation
\eqref{eq:n-Z} makes the first boundary correction label diagonal; higher
filtration terms vanish on a linear input.
\end{proof}

\section{The stable coordinate families}\label{sec:coordinate}

Extend the PBW gauge trivially over the finite fermionic factors.  The total
transports are
\begin{equation}\label{eq:Tpm}
 \mathbb T_q^+:=\mathbb G_{m,N}^{\rm PBW}(\mathbb U_q^+)^{-1},
 \qquad
 \mathbb T_q^-:=\mathbb G_{m,N}^{\rm PBW}(\mathbb U_q^-)^{-1}.
\end{equation}
The order in \eqref{eq:Tpm} is important: the scalar divided-power gauge acts
outside the triangular trace correction.  Define
\begin{equation}\label{eq:Dpm}
 \mathbb D_{k,q}^\pm
 :=\mathbb T_q^\pm\mathbb A_k^\pm(\mathbb T_q^\pm)^{-1}
 =\mathbb G_{m,N}^{\rm PBW}(\mathbb U_q^\pm)^{-1}
  \mathbb A_k^\pm\mathbb U_q^\pm
  (\mathbb G_{m,N}^{\rm PBW})^{-1}.
\end{equation}

Let $\scrF_{\mathrm{loc}}^{(n)}$ be the direct sum of the finite charge
blocks of Section~\ref{sec:fermionic}, with the factorwise localization
\(\mathcal A_{s_{m|n}^{\rm sym}}^{\min}\).  Put
\[
 \scrR_{N,n}^{\rm abs,PBW}
 :=\scrR_N^{\rm abs,PBW}\widehat\otimes\scrF_{\mathrm{loc}}^{(n)}.
\]
Extend the bosonic realization \eqref{eq:iota-bosonic} by the identity on
those charge factors.  Thus
\begin{equation}\label{eq:iota-full}
 \iota_{m,n,N}:
 \scrR_{N,n}^{\rm abs,PBW}
 \longrightarrow
 \scrR_{m,N}^{\rm PBW}\widehat\otimes
 \scrF_{\mathrm{loc}}^{(n)}
 \subset
 \scrP_{m,N}^{\rm Cl}\widehat\otimes
 \scrF_{\mathrm{loc}}^{(n)}
\end{equation}
is the generator-wise standard realization of the finite-support Hermitian
radial PBW module.  The stable PBW independence discussed in
Section~\ref{sec:scalar} makes this realization faithful on the standard
radial sector used here.  On the radial side, write
$\mathfrak c_i^{\pm,\rm rad}$ for the abstract label contractions and
\begin{equation}\label{eq:Urad}
 \mathbb U_{q,\rm rad}^\pm
 :=\exp\left(\sum_{i=1}^N
 \mathfrak c_i^{\pm,\rm rad}\widehat\otimes\mathbb N_i^\pm\right).
\end{equation}
Let $\mathcal G_N$ be the radial PBW gauge acting by the same weights
$g_\alpha$ as in \eqref{eq:radial-gauge}.  Let
$\mathfrak A_{k,\mathrm{rad}}^\pm$ denote the ungauged Hermitian radial
super-Dirac seeds: the abstract bosonic Hermitian vector derivative in label
$k$ plus the (generally $q$-dependent) finite-support seed $\mathbb Q_k^\pm$ of
Section~\ref{sec:fermionic}.  Define, entirely on the abstract radial PBW
module,
\begin{equation}\label{eq:radial-D-definition}
 \mathfrak D_{k,q}^\pm
 :=\mathcal G_N(\mathbb U_{q,\rm rad}^\pm)^{-1}
   \mathfrak A_{k,\rm rad}^\pm
   \mathbb U_{q,\rm rad}^\pm\mathcal G_N^{-1}.
\end{equation}
This is the flat finite-support radial calculus used as the reference calculus in the
Introduction; its scalar columns are \eqref{eq:radial-target-minus}--
\eqref{eq:radial-target-plus}.  Formula \eqref{eq:radial-D-definition} is also
the convenient definition needed for the coordinate comparison below.

\begin{theorem}[Exact radial intertwining]\label{thm:radial-intertwining}
Assume $m\ge2N$ and finite fermionic support.  Then
\begin{equation}\label{eq:intertwine-U}
 \mathbb U_q^\pm\iota_{m,n,N}
 =\iota_{m,n,N}\mathbb U_{q,\rm rad}^\pm,
 \qquad
 \mathbb G_{m,N}^{\rm PBW}\iota_{m,n,N}
 =\iota_{m,n,N}\mathcal G_N.
\end{equation}
Consequently
\begin{equation}\label{eq:D-radial}
 \mathbb D_{k,q}^\pm\iota_{m,n,N}
 =\iota_{m,n,N}\mathfrak D_{k,q}^\pm,
\end{equation}
where $\mathfrak D_{k,q}^\pm$ are the radial operators defined in
\eqref{eq:radial-D-definition}, with $Q=q^{2m-2n}$.
\end{theorem}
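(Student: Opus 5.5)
The plan is to prove the two intertwining identities in \eqref{eq:intertwine-U} separately, and then obtain \eqref{eq:D-radial} by formal conjugation. Before any conjugation step, we also need that $\iota_{m,n,N}$ intertwines the ungauged seeds: $\mathbb A_k^\pm\iota_{m,n,N}=\iota_{m,n,N}\mathfrak A_{k,\rm rad}^\pm$.

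\emph{Step 1 (seed intertwining).} The fermionic summand $I\widehat\otimes\mathbb Q_k^\pm$ is literally the same operator on both sides, because $\iota_{m,n,N}$ is the identity on $\scrF_{\rm loc}^{(n)}$. For the bosonic summand, compute the action of $\partial_{Z_k}$ on a represented word $Z_IZ_J^\dagger H^\alpha$ using the graded Leibniz rule and the Witt relations \eqref{eq:witt}. Each daggered factor $Z_j^\dagger$ anticommutes past $\partial_{Z_k}$ with the contraction $\bar z_{ja}$-free term. The Gram factors give $\partial_{Z_k}H_{kj}=Z_j^\dagger$, and $\partial_{Z_k}Z_j$ produces the Hermitian radial Euler/Cartan terms. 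These are exactly the classical Hermitian radial relations of \cite{DeSchepperGuzmanSommen2017}, whose realization is the point of \cite{DeSchepperGuzmanSommen2018Hermitian}. Hence the image of the abstract vector derivative equals $\partial_{Z_k}$ on $\scrR^{\rm PBW}_{m,N}$. The conjugate case follows from $\ddagger$.

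\emph{Step 2 (the gauges).} The identity $\mathbb G_{m,N}^{\rm PBW}\iota=\iota\mathcal G_N$ is immediate from Theorem~\ref{thm:full-pbw}, equation \eqref{eq:PBW-radial}. Both sides multiply the basis element $Z_IZ_J^\dagger H^\alpha\otimes\phi$ by $g_\alpha$, and the PBW gauge acts trivially on the fermionic factor. For the triangular factors, Theorem~\ref{thm:capelli-pbw} gives $\mathfrak n_i^+=\mathfrak c_i^+$ on every standard radial exterior word, and likewise for the conjugate. Moreover, $\mathfrak c_i^+$ maps standard words to standard words. So $\mathfrak n_i^+\iota=\iota\,\mathfrak c_i^{+,\rm rad}$ on the bosonic sector, and $\mathbb N_i^\pm$ acts identically on both sides. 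The exponent $\mathbb H_q^\pm$ therefore intertwines with $\sum_i\mathfrak c_i^{\pm,\rm rad}\widehat\otimes\mathbb N_i^\pm$. Since the exponentials are finite on each filtered block, and the image of $\iota$ is stable under the exponent, the intertwining passes to $\mathbb U_q^\pm$ and to $(\mathbb U_q^\pm)^{-1}=\exp(-\mathbb H_q^\pm)$.

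\emph{Main obstacle.} The delicate point is the operator-valued coefficient of $\mathbb N_i^\pm$. It depends on the opposite Euler shell through $B_q^{(k)}(E_{\bar z})$. I must check that the coordinate Euler operator $E_{\bar z}$ and the abstract radial degree agree on $\iota$-images. On $Z_IZ_J^\dagger H^\alpha$, the antiholomorphic degree is $|J|+\sum\alpha_{ij}$ in both models, so the spectral factor \eqref{eq:Bqk} takes the same value. I must also check the graded signs when $\mathfrak n_i^+$ passes through the fermionic tensor factor, which match because $\widehat\otimes$ is used on both sides. A second concern is that the Capelli identity \eqref{eq:n-c} is stated on plus-polarized words. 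On mixed words $Z_IZ_J^\dagger H^\alpha$, the daggered part lies in $\mathscr B^+$, so the definition \eqref{eq:abstract-c} applies directly. Faithfulness of $\iota$ in the stable range, as discussed in Section~\ref{sec:scalar}, ensures that these identities of images lift to abstract identities.

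\emph{Step 3 (conjugation).} Composing the intertwinings along \eqref{eq:Dpm} gives
$\mathbb D_{k,q}^\pm\iota
=\mathbb G^{\rm PBW}(\mathbb U_q^\pm)^{-1}\mathbb A_k^\pm\mathbb U_q^\pm(\mathbb G^{\rm PBW})^{-1}\iota
=\iota\,\mathcal G_N(\mathbb U_{q,\rm rad}^\pm)^{-1}\mathfrak A_{k,\rm rad}^\pm\mathbb U_{q,\rm rad}^\pm\mathcal G_N^{-1}$.
This uses $(\mathbb G^{\rm PBW})^{-1}\iota=\iota\mathcal G_N^{-1}$, which holds because $g_\alpha\neq0$. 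The result is \eqref{eq:D-radial}, with $Q=q^{2m-2n}$ entering only through $s_{m|n}^{\rm sym}$ in the common seed.
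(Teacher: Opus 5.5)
Your proposal is correct and follows the paper's own argument: Theorem~\ref{thm:capelli-pbw}'s identification $\mathfrak n_i^\pm=\mathfrak c_i^\pm$ on standard words, exponentiated, gives the $\mathbb U_q^\pm$ intertwining; Theorem~\ref{thm:full-pbw} gives the gauge intertwining; the seed data coincide on both sides; and composing in the order of \eqref{eq:Dpm} yields \eqref{eq:D-radial}. Your explicit checks of the bosonic seed $\partial_{Z_k}$ and of the $E_{\bar z}$-dependent coefficient inside $\mathbb N_i^\pm$ spell out what the paper leaves implicit when it appeals to Proposition~\ref{prop:finite-seed}.
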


\begin{proof}
Theorem~\ref{thm:capelli-pbw} identifies every coordinate
$\mathfrak n_i^\pm$ with the represented abstract contraction on the full
standard radial PBW module, so exponentiation gives the first identity in
\eqref{eq:intertwine-U}.  Theorem~\ref{thm:full-pbw} gives the second identity
on every radial PBW monomial, including all Clifford grades.  The seed maps
and their occupancy-branch coefficients are the same on the two sides by
Proposition~\ref{prop:finite-seed}.  Combining the two intertwining relations
with the order \eqref{eq:Tpm} gives \eqref{eq:D-radial}.
\end{proof}

\begin{theorem}[Stable transported coordinate calculus]\label{thm:stable-coordinate}
Let $N,n<\infty$, $n\ge1$, and assume $m\ge2N$.  On arbitrary
Clifford-valued bosonic coordinate polynomials tensored with the factorwise
localized finite-support fermionic charge modules, the families
$\mathbb D_{k,q}^\pm$ of \eqref{eq:Dpm} satisfy:
\begin{enumerate}[label=(\roman*)]
\item
\[
 (\mathbb D_{k,q}^\pm)^2=0,
 \qquad
 \{\mathbb D_{i,q}^\pm,\mathbb D_{j,q}^\pm\}=0\quad(i\ne j);
\]
\item their standard radial restriction is exactly the flat finite-support
$q$-Hermitian PBW calculus;
\item the scalar boundary trace is $[2m-2n]_q$ and the coefficient of the classical
total Cartan generator is undeformed;
\item the construction is $U(m)$-equivariant, label-permutation covariant,
fermionic-pair permutation covariant and compatible with inclusions of finite
support that remain in the stable range;
\item no inverse Gram determinant occurs: the bosonic operators preserve the
ordinary coordinate polynomial module, while the only algebraic localization
is the factorwise Cartan localization of Section~\ref{sec:fermionic};
\item coefficientwise, $\mathbb D_{k,q}^\pm$ tends to the classical Hermitian
super Dirac pair as $q\to1$.
\end{enumerate}
\end{theorem}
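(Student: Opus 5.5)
The plan is to read each item of Theorem~\ref{thm:stable-coordinate} off the conjugation formula \eqref{eq:Dpm}, using the fact that $\mathbb T_q^\pm$ is a composite of invertible polynomial-preserving maps that have already been constructed.

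\emph{Well-definedness and item (v).} First I would record that $\mathbb T_q^\pm=\mathbb G_{m,N}^{\rm PBW}(\mathbb U_q^\pm)^{-1}$ is an invertible endomorphism of $\scrP^{\rm Cl}_{m,N}\widehat\otimes\scrF_{\rm loc}^{(n)}$.
\begin{itemize}
\item The factor $\mathbb G_{m,N}^{\rm PBW}$ is a degree-preserving polynomial automorphism by Theorem~\ref{thm:full-pbw}, extended trivially over the fermionic factors.
\item The factor $\mathbb U_q^\pm$ is a finite exponential of $\mathbb H_q^\pm$. Its bosonic ingredients are the columns $\mathfrak n_i^\pm$ of Lemma~\ref{lem:label-column}. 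These are polynomial because $\mathcal C_z$ has eigenvalues at least $m-N+1>0$ on each shell, so no Gram determinant is inverted. Its fermionic ingredients are the homotopies $\mathbb N^\pm$, whose coefficients lie in $\mathcal A^{\min}_{s}$ by Theorem~\ref{thm:min-local}.
\end{itemize}
This already gives (v): the only denominators are the Cartan ones from Section~\ref{sec:fermionic}.

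\emph{Item (i).} This is immediate from conjugation. I would write $\mathbb D_{k,q}^\pm=\mathbb G^{\rm PBW}\mathbb B_{k,q}^\pm(\mathbb G^{\rm PBW})^{-1}$ and apply Proposition~\ref{prop:tri-flat}. Equivalently, one can conjugate \eqref{eq:A-flat} directly. The relation $(\mathbb A_k^\pm)^2=0$ uses $\partial_{Z_k}^2=0$, $(\mathbb Q_k^\pm)^2=0$ and the graded tensor sign. The same-polarization anticommutators vanish for the same reasons.

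\emph{Item (ii).} This is exactly Theorem~\ref{thm:radial-intertwining}, equation \eqref{eq:D-radial}. Faithfulness of $\iota_{m,n,N}$ in the range $m\ge2N$ makes this restriction statement meaningful.

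\emph{Item (iii).} Here I would evaluate $\mathbb D_{k,q}^\pm$ on the linear supervector. On linear inputs the higher filtration terms of $\mathbb U_q^\pm$ vanish, and the first correction is label diagonal by \eqref{eq:n-Z} (Proposition~\ref{prop:tri-flat}). The contributions then combine as follows.
\begin{itemize}
\item The bosonic part gives the divided-power coefficient $c_1$ on $H_{kk}$, which yields the scalar $[2m]_q$ after the Witt contraction.
\item The fermionic part gives \eqref{eq:finite-boundary}.
\item The identity \eqref{eq:q-subtraction} assembles these into $[2m-2n]_q$.
\item The Cartan coefficient $\pm2i\sum_j B_{{\rm f},j}$ is undeformed.
\end{itemize}
This is the step I expect to be the main obstacle. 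One must check that the defect $\mathbb M^{\pm,(n)}$, entering through $\mathbb N=-\mathbb h\mathbb M$, contributes only the correction $\delta_m(q)$ that converts the classical bosonic trace $2m$ into $[2m]_q$. It must not alter the Cartan term. I would try to obtain this from Proposition~\ref{prop:branch-rigidity} at the empty branch $S=\varnothing$, $r=0$, together with \eqref{eq:finite-seed-identities}.

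\emph{Item (iv).} Covariance holds because every ingredient is natural.
\begin{itemize}
\item The $U(m)$-equivariance and label covariance come from Theorem~\ref{thm:stable-separation}, the Capelli column, and the Wick map, all of which are built from invariant contractions.
\item Fermionic pair permutation covariance comes from Proposition~\ref{prop:finite-seed-contract}.
\item Support compatibility holds because the normal form and $\mathcal C_z^{-1}$ commute with stable label inclusions.
\end{itemize}

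\emph{Item (vi).} As $q\to1$ we have $g_\alpha\to1$ and $\delta_m(q)\to0$, and $s\to2$ so that $\Phi_2=2i$. Hence $\mathbb G^{\rm PBW}\to I$, while $\mathbb U_q^\pm$ tends to a unipotent transport that only removes the classical fermionic normalization. Coefficientwise, $\mathbb D_{k,q}^\pm$ then tends to $\partial_{Z_k}$ plus the classical Berezin–Weyl term, which is the classical Hermitian super Dirac pair.
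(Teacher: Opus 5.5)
Your overall architecture is the paper's. You get (i) by conjugating Proposition~\ref{prop:tri-flat} with $\mathbb G^{\rm PBW}_{m,N}$, (ii) from Theorem~\ref{thm:radial-intertwining}, (v) from Lemma~\ref{lem:label-column}, Theorem~\ref{thm:full-pbw} and the finite exponentials, and (iv) by naturality.

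\textbf{The gap is in (vi).} You say $\mathbb U_q^\pm$ tends to a unipotent transport that ``removes the classical fermionic normalization''. If the limit $\mathbb U_1^\pm$ were not the identity, then $\mathbb D_{k,q}^\pm$ would tend to $(\mathbb U_1^\pm)^{-1}\mathbb A_k^\pm\mathbb U_1^\pm$. That is not the classical pair in general, so your conclusion would not follow. What you need, and what the paper uses, is $\mathbb U_q^\pm\to I$, i.e.\ $\mathbb N^{\pm,(n)}=-\mathbb h\,\mathbb M^{\pm,(n)}\to0$.

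Your remark $\delta_m(q)\to0$ only handles the empty occupancy branch. You also need every branch coefficient $\chi_S^{(k)}$ in \eqref{eq:chiS} to vanish in the limit. This does hold:
\begin{itemize}
\item As $q\to1$ we have $s^{\rm sym}_{m|n}\to2$ and $B_q^{(k)}\to1$.
\item Since $\Phi_2\equiv2i$, one gets $\sigma_c^+=2i\beta_{c+1}$ and hence $\kappa_c^+=-\tfrac i2\beta_{c+1}\alpha_c$.
\item Checking the cases $c\ge1$, $c=0$, $c\le-1$ separately gives $\kappa_c^+-1-K_c=0$.
\item Therefore $\chi_S^{(k)}\to\sum_{j\in S}(\kappa_j^+-1-K_j)=0$.
\end{itemize}
The normalization of the limiting fermionic term is carried by $\mathbb Q^\pm$ itself, not by the transport. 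At $q=1$, $\mathbb Q^\pm$ is $-\tfrac14$ of the pair in Proposition~\ref{prop:one-pair}.

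\textbf{On (iii).} Your bullet attributing $[2m]_q$ to ``$c_1$ on $H_{kk}$'' is not the mechanism, and it contradicts your next paragraph. A linear input $Z_ke$ carries no Gram factor, so $g_0=1$ and the Wick exponent acts trivially. Thus $\mathbb G^{\rm PBW}_{m,N}$ is invisible on this column, and $\partial_{Z_k}$ alone returns the classical bosonic boundary. The deformation comes entirely from the triangular transport, as your ``main obstacle'' paragraph says.

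Your plan for that check does work:
\begin{itemize}
\item On $Z_ke_{\mathbf0}$ only the filtration-one term of $\mathbb U_q^+$ acts, via $\mathfrak n_k^+(Z_k)=1$.
\item We have $\mathbb Q e_{\mathbf0}=0$ and, by \eqref{eq:M-explicit}, $\mathbb M e_{\mathbf0}=\delta_m(q)e_{\mathbf0}$.
\item Proposition~\ref{prop:finite-seed} then gives $\mathbb Q\mathbb N e_{\mathbf0}=-\delta_m(q)e_{\mathbf0}$. This is a pure scalar, because the factors $\pi_{0;r}^+$ cancel against their inverses.
\item $(\mathbb U_q^+)^{-1}$ and $\mathbb G^{\rm PBW}_{m,N}$ act trivially on the bosonically constant output.
\end{itemize}
So the scalar part shifts by $\delta_m(q)$ (up to the graded sign), and the Cartan coefficient is untouched.

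The paper does no such linear computation. It transfers the trace identity from the radial column through Theorem~\ref{thm:radial-intertwining}. Your route is more explicit, shows where $\delta_m(q)$ actually enters, and is consistent with \eqref{eq:charge-zero-linear-defect}.
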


\begin{proof}
Same-polarization flatness follows from Proposition~\ref{prop:tri-flat} by
conjugation with the common PBW gauge.  The exact radial statement is
Theorem~\ref{thm:radial-intertwining}.

For the linear boundary, the bosonic divided-power column has scalar trace
$[2m]_q$.  Proposition~\ref{prop:finite-seed-contract} gives the fermionic
scalar contribution
$-Q_{m|n}[2n]_q$ and the undeformed Cartan contribution
$\pm2i\sum_jB_{{\rm f},j}$.  Therefore
\[
 [2m]_q-Q_{m|n}[2n]_q=[2m-2n]_q
\]
by \eqref{eq:q-subtraction}.  The Capelli--PBW gauge and triangular
conjugation change the realization of the boundary column but not this scalar
trace identity, because Theorem~\ref{thm:radial-intertwining} identifies that
column with the radial one exactly.  This proves (iii), including the
undeformed Cartan coefficient.

Equivariance and support compatibility follow from the stable normal form,
the natural Capelli column and the factorwise symmetric seed.  Polynomial
preservation follows from Lemma~\ref{lem:label-column},
Theorem~\ref{thm:full-pbw}, and finiteness of the triangular exponentials.  At
$q=1$ the divided-power weights are $1$, the scalar defect
$\delta_m(q)$ and all finite-support defect coefficients vanish, and the normalized
fermionic seed becomes the classical one.  Hence both total transports tend
coefficientwise to the identity and the classical Hermitian super Dirac pair
is recovered.
\end{proof}

\begin{remark}[Exact scope]\label{rem:scope}
Theorem~\ref{thm:stable-coordinate} is a coordinate theorem, but not a fixed
local differential realization of the 2018 type.  The bosonic coordinate
module is polynomial and no inverse Gram determinant appears, yet the
construction uses homogeneous-shell Capelli inverses and the factorwise
Cartan localization.  Section~\ref{sec:obstructions} proves that two broad
undeformed locality classes cannot reproduce the exact radial restriction for
$0<q<1$.
\end{remark}

\section{Mixed polarizations and the complete relative algebra}\label{sec:mixed}

The two polarizations share the same outer PBW gauge but use conjugate
triangular frames.  Their mixed algebra is therefore controlled by the
relative triangular transport rather than by one common conjugation.

\subsection{Relative transport}
Put
\begin{equation}\label{eq:Lcl}
 \mathbb L_{ij}^{\rm seed}:=\{\mathbb A_i^+,\mathbb A_j^-\}.
\end{equation}
By the graded tensor convention the cross terms vanish, hence explicitly
\begin{equation}\label{eq:Lcl-explicit}
 \mathbb L_{ij}^{\rm seed}
 =\Delta_{ij}\widehat\otimes I
  +I\widehat\otimes\{\mathbb Q_i^+,\mathbb Q_j^-\}.
\end{equation}
Thus the purely bosonic restriction is $\Delta_{ij}$.  The second summand is
the ungauged finite-support fermionic mixed contraction and is generally
$q$-dependent through the seed parameter; at $q=1$ it becomes the classical
fermionic mixed contraction.  Define
\begin{equation}\label{eq:R}
 \mathbb R_q:=\mathbb U_q^+(\mathbb U_q^-)^{-1},
 \qquad
 \boldsymbol\Omega_{j,q}^{-\mid+}
 :=\mathbb R_q\mathbb A_j^-\mathbb R_q^{-1}-\mathbb A_j^-.
\end{equation}
At $q=1$, $\mathbb R_1=I$ and
$\boldsymbol\Omega_{j,1}^{-\mid+}=0$.

\begin{theorem}[Relative-transport formula]\label{thm:relative}
For $m\ge2N$ and finite fermionic support,
\begin{equation}\label{eq:mixed-relative}
 \boxed{
 \{\mathbb D_{i,q}^+,\mathbb D_{j,q}^-\}
 =\mathbb G_{m,N}^{\rm PBW}(\mathbb U_q^+)^{-1}
 \left(\mathbb L_{ij}^{\rm seed}
 +\{\mathbb A_i^+,\boldsymbol\Omega_{j,q}^{-\mid+}\}\right)
 \mathbb U_q^+(\mathbb G_{m,N}^{\rm PBW})^{-1}. }
\end{equation}
Thus every additional mixed term is contained in the single finite
relative coefficient $\boldsymbol\Omega_{j,q}^{-\mid+}$.
\end{theorem}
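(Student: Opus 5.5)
The identity is pure conjugation algebra; the plan is to verify it directly from the definitions \eqref{eq:Dpm} and \eqref{eq:R}.

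\emph{Step 1: pull out the common outer gauge.} Write $\mathbb G:=\mathbb G_{m,N}^{\rm PBW}$. By \eqref{eq:Dpm},
\[
 \mathbb D_{i,q}^+=\mathbb G(\mathbb U_q^+)^{-1}\mathbb A_i^+\mathbb U_q^+\mathbb G^{-1},
 \qquad
 \mathbb D_{j,q}^-=\mathbb G(\mathbb U_q^-)^{-1}\mathbb A_j^-\mathbb U_q^-\mathbb G^{-1}.
\]
Both polarizations use the same $\mathbb G$, and conjugation by $\mathbb G(\mathbb U_q^+)^{-1}$ is an algebra automorphism. It therefore preserves graded anticommutators. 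I will insert $\mathbb U_q^+(\mathbb U_q^+)^{-1}=I$ into $\mathbb D_{j,q}^-$ and rewrite it as
\[
 \mathbb D_{j,q}^-
 =\mathbb G(\mathbb U_q^+)^{-1}\bigl(\mathbb R_q\mathbb A_j^-\mathbb R_q^{-1}\bigr)\mathbb U_q^+\mathbb G^{-1},
\]
using $\mathbb R_q=\mathbb U_q^+(\mathbb U_q^-)^{-1}$ and $\mathbb R_q^{-1}=\mathbb U_q^-(\mathbb U_q^+)^{-1}$.

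\emph{Step 2: split off the seed.} By the definition of $\boldsymbol\Omega_{j,q}^{-\mid+}$,
\[
 \mathbb R_q\mathbb A_j^-\mathbb R_q^{-1}=\mathbb A_j^-+\boldsymbol\Omega_{j,q}^{-\mid+}.
\]
Since both polarizations are conjugated by the same automorphism, and the anticommutator is bilinear, we get
\[
 \{\mathbb D_{i,q}^+,\mathbb D_{j,q}^-\}
 =\mathbb G(\mathbb U_q^+)^{-1}\bigl(\{\mathbb A_i^+,\mathbb A_j^-\}+\{\mathbb A_i^+,\boldsymbol\Omega_{j,q}^{-\mid+}\}\bigr)\mathbb U_q^+\mathbb G^{-1}.
\]
The first term is $\mathbb L_{ij}^{\rm seed}$ by \eqref{eq:Lcl}, which gives the boxed formula \eqref{eq:mixed-relative}.

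\emph{Main obstacle: well-definedness.} The only non-formal point is that every operator involved must be a genuine invertible endomorphism of one and the same module, namely $\scrP^{\rm Cl}_{m,N}\widehat\otimes\scrF_{\rm loc}^{(n)}$ over $\mathcal A^{\min}_{s}$. This is what makes the inserted identities and the inverse $\mathbb R_q^{-1}$ legitimate.
\begin{enumerate}[label=(\alph*)]
\item $\mathbb G$ is a polynomial automorphism by Theorem~\ref{thm:full-pbw}.
\item $\mathbb U_q^\pm$ are finite exponentials of nilpotent, filtration-increasing operators. Their inverses are therefore $\exp(-\mathbb H_q^\pm)$, and they are polynomial-preserving by Lemma~\ref{lem:label-column} and Theorem~\ref{thm:min-local}. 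The minus version is obtained via the order-preserving conjugation $\ddagger$, which keeps the same localized coefficient ring because $S_s$ is stable under $\ell_+\leftrightarrow\ell_-$.
\item Consequently $\mathbb R_q$ and $\mathbb R_q^{-1}$ are finite products of such maps, and $\boldsymbol\Omega_{j,q}^{-\mid+}$ is a finite, polynomial-preserving operator.
\end{enumerate}
The grading should also be recorded. $\mathbb U_q^\pm$ are even, since each $\mathfrak n_i^\pm\widehat\otimes\mathbb N_i^\pm$ is a product of two odd factors. Hence $\mathbb R_q$ is even, conjugation by it preserves parity, and $\boldsymbol\Omega_{j,q}^{-\mid+}$ is odd. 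This is what makes the bracket in \eqref{eq:mixed-relative} an anticommutator. With these points in place, the theorem reduces to the two-line computation above. The finiteness statement, that all extra mixed terms sit in the single coefficient $\boldsymbol\Omega_{j,q}^{-\mid+}$, is then immediate.
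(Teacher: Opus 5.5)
Your proposal is correct and follows the paper's proof: you rewrite $\mathbb D_{j,q}^-$ in the plus frame as the conjugate of $\mathbb R_q\mathbb A_j^-\mathbb R_q^{-1}=\mathbb A_j^-+\boldsymbol\Omega_{j,q}^{-\mid+}$, take the anticommutator with $\mathbb A_i^+$, and restore the common outer gauge $\mathbb G_{m,N}^{\rm PBW}(\mathbb U_q^+)^{-1}$. Your well-definedness bookkeeping is consistent with the paper but goes beyond what the identity needs, and so does the parity remark, since conjugation by any invertible operator preserves plain anticommutators.
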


\begin{proof}
Conjugating the actual minus frame by $\mathbb U_q^+$ gives
\[
 \mathbb U_q^+(\mathbb U_q^-)^{-1}\mathbb A_j^-
 \mathbb U_q^-(\mathbb U_q^+)^{-1}
 =\mathbb R_q\mathbb A_j^-\mathbb R_q^{-1}
 =\mathbb A_j^-+\boldsymbol\Omega_{j,q}^{-\mid+}.
\]
Taking the anticommutator with the plus seed $\mathbb A_i^+$ and restoring
the common outer PBW gauge yields \eqref{eq:mixed-relative}.
\end{proof}

\subsection{The local rank-one defect}
The factorized formulas below are global, but it is useful to record the
complete local defect once.  For one bosonic label write
\[
 E:=E_z,\qquad \bar E:=E_{\bar z},
\]
and for a spectral coefficient $F(E,\bar E)$ put
\[
 \tau_z^{-a}F:=F(E-a,\bar E),
 \qquad
 \tau_{\bar z}^{-b}F:=F(E,\bar E-b).
\]
These shifts are always placed to the right of an operator which has already
lowered the corresponding degree.

Define the normalized mixed Laplacian
\begin{equation}\label{eq:Lambda-local}
 \boldsymbol\Lambda^{+|-}:=\{\mathfrak n^+,\partial_{Z^\dagger}\}.
\end{equation}
For arbitrary labels the same computation gives
\begin{equation}\label{eq:Lambda-multilabel}
 \boldsymbol\Lambda_{ij}^{+|-}
 =\sum_{\ell=1}^N(\mathcal C_z^{-1})_{i\ell}\Delta_{\ell j}.
\end{equation}
Indeed, $\{\mathfrak s_\ell,\partial_{Z_j^\dagger}\}=\Delta_{\ell j}$ and
$\mathcal C_z$ commutes with the antiholomorphic derivative.

The Capelli contractions lower one Euler degree, so on homogeneous shells
\begin{equation}\label{eq:euler-shift-n}
 \mathfrak n^+F(E)=F(E+1)\mathfrak n^+,
 \qquad
 F(E)\mathfrak n^+=\mathfrak n^+F(E-1),
\end{equation}
with the conjugate formula for $\mathfrak n^-$ and $\bar E$.
Let
\[
 d^+:=\partial_Z,\quad d^-:=\partial_{Z^\dagger},\quad
 e^\pm:=\{d^\pm,\mathfrak n^\pm\}.
\]
For one label and one fermionic pair, write
$\mathbb Q^\pm,\mathbb M^\pm,\mathbb N^\pm$ for the four-slot local seed operators and
\begin{equation}\label{eq:Kminus-local}
 \mathbb K^-:=[\mathbb A^-,\mathbb H^-]
 =\mathfrak n^-\widehat\otimes\mathbb M^-
  +e^-\widehat\otimes\mathbb N^-.
\end{equation}

\begin{proposition}[Correct first relative commutator]\label{prop:first-local}
For arbitrary labels,
\begin{align}
 [\mathbb H_i^+,\mathbb A_j^-]
 &=-\boldsymbol\Lambda_{ij}^{+|-}
    \widehat\otimes\mathbb N_i^+
 \notag\\
 &\quad+\delta_{ij}\,
   \mathfrak n_i^+\partial_{Z_i^\dagger}
   \widehat\otimes
   (\mathbb N_i^+-\tau_{\bar i}^{-1}\mathbb N_i^+)
 \notag\\
 &\quad+\delta_{ij}\,
   \mathfrak n_i^+\widehat\otimes
   \{\mathbb N_i^+,\mathbb Q_i^-\}.
 \label{eq:first-local}
\end{align}
The middle line is the adjacent-shell correction and is essential whenever
the plus seed coefficient depends on the opposite Euler degree.
\end{proposition}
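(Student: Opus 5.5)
The plan is to compute the graded commutator directly. Write $\mathbb H_i^+=\mathfrak n_i^+\widehat\otimes\mathbb N_i^+$ and $\mathbb A_j^-=\partial_{Z_j^\dagger}\widehat\otimes I+I\widehat\otimes\mathbb Q_j^-$. Since $\mathbb H_i^+$ is even and $\mathbb A_j^-$ is odd, the bracket is an ordinary commutator, and it splits into a bosonic part $[\mathfrak n_i^+\widehat\otimes\mathbb N_i^+,\partial_{Z_j^\dagger}\widehat\otimes I]$ and a fermionic part $[\mathfrak n_i^+\widehat\otimes\mathbb N_i^+,I\widehat\otimes\mathbb Q_j^-]$. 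I treat the two parts separately, keeping track of the graded tensor signs: both $\mathfrak n_i^+$ and $\mathbb N_i^+$ are odd, and their product is even.

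For the fermionic part, the bosonic factor $\mathfrak n_i^+$ passes through $I\widehat\otimes\mathbb Q_j^-$ with a sign. This turns the commutator into $\mathfrak n_i^+\widehat\otimes\{\mathbb N_i^+,\mathbb Q_j^-\}$. When $i\ne j$ the seeds sit in different label factors of the graded tensor product, so they anticommute and the term vanishes. When $i=j$ it gives the third line of \eqref{eq:first-local}.

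For the bosonic part, I move $\partial_{Z_j^\dagger}\widehat\otimes I$ past $\mathbb N_i^+$. The key subtlety is that $\mathbb N_i^+=-\mathbb h_i^+\mathbb M_i^+$ carries the spectral factor $B_q^{(k)}(E_{\bar z})$ from \eqref{eq:chiS}, and this factor depends on the antiholomorphic Euler degree of label $i$. For $j\ne i$, $\partial_{Z_j^\dagger}$ does not change $E_{\bar z_i}$, so it commutes with that coefficient up to the graded sign. Then I use $\{\mathfrak n_i^+,\partial_{Z_j^\dagger}\}=\boldsymbol\Lambda_{ij}^{+|-}$ from \eqref{eq:Lambda-local}--\eqref{eq:Lambda-multilabel}, which rests on $\{\mathfrak s_\ell,\partial_{Z_j^\dagger}\}=\Delta_{\ell j}$ and on $\mathcal C_z$ commuting with antiholomorphic derivatives. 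This yields $-\boldsymbol\Lambda_{ij}^{+|-}\widehat\otimes\mathbb N_i^+$. For $j=i$, $\partial_{Z_i^\dagger}$ lowers $E_{\bar z_i}$ by one. Pushing it to the right of $\mathbb N_i^+$ therefore replaces the coefficient by its shifted version, $\mathbb N_i^+\partial_{Z_i^\dagger}=\partial_{Z_i^\dagger}\,\tau_{\bar i}^{-1}\mathbb N_i^+$ in the convention that the shift sits to the right of the lowering operator. Writing $\tau_{\bar i}^{-1}\mathbb N_i^+=\mathbb N_i^+-(\mathbb N_i^+-\tau_{\bar i}^{-1}\mathbb N_i^+)$ produces the same $-\boldsymbol\Lambda_{ii}^{+|-}\widehat\otimes\mathbb N_i^+$ term plus the leftover $\mathfrak n_i^+\partial_{Z_i^\dagger}\widehat\otimes(\mathbb N_i^+-\tau_{\bar i}^{-1}\mathbb N_i^+)$. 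This is the middle line.

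I expect the main obstacle to be sign and ordering bookkeeping rather than any conceptual step. Three things need care: the Koszul signs when odd bosonic operators pass odd fermionic ones; the rule in \eqref{eq:euler-shift-n} for where spectral shifts sit, namely to the right of the lowering operator; and confirming that $\mathfrak n_i^+$ does not change $E_{\bar z}$, so that no shift on the holomorphic side appears. To settle the overall signs, I would first verify the formula in the rank-one case ($N=1$, one fermionic pair) on a single four-slot charge block, using the explicit arrows \eqref{eq:Qplus-first}--\eqref{eq:hplus-second}. The general-label case then follows because all other label factors enter only through $\boldsymbol\Lambda_{ij}^{+|-}$ and through graded anticommutation.
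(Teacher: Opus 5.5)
Your proposal is correct and matches the paper's proof essentially step by step. You split $\mathbb A_j^-$ into its bosonic and fermionic parts and obtain $-\boldsymbol\Lambda_{ij}^{+|-}\widehat\otimes\mathbb N_i^+$ from \eqref{eq:Lambda-multilabel}; for $i=j$ you get the middle line by moving the $\bar E$-dependent seed coefficient past $\partial_{Z_i^\dagger}$ with the shift rule \eqref{eq:euler-shift-n}; and the fermionic part gives the last line, where your remark that $\{\mathbb N_i^+,\mathbb Q_j^-\}=0$ for $i\ne j$ (the two seeds act on different graded label factors) supplies a justification the paper leaves implicit.
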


\begin{proof}
The bosonic superbracket gives the first term through
\eqref{eq:Lambda-multilabel}.  For $i=j$, moving the opposite-degree spectral
coefficient back to the right after $\partial_{Z_i^\dagger}$ uses
\eqref{eq:euler-shift-n} and produces the second line.  The fermionic part of
$\mathbb A_j^-$ gives the last line.  For $i\ne j$ the seed coefficient
commutes with the opposite derivative and only the first term remains.
\end{proof}

\begin{theorem}[Exact one-label relative connection]\label{thm:local-Omega}
On every rank-one polynomial--localized one-pair block,
\begin{align}
 \boldsymbol\Omega_q^{-|+}
 &=\mathbb K^-
   -\boldsymbol\Lambda^{+|-}\widehat\otimes\mathbb N^+
 \notag\\
 &\quad+\mathfrak n^+d^-\widehat\otimes
    (\mathbb N^+-\tau_{\bar z}^{-1}\mathbb N^+)
   +\mathfrak n^+\widehat\otimes\{\mathbb N^+,\mathbb Q^-\}
 \notag\\
 &\quad+\mathfrak n^+\mathfrak n^-\widehat\otimes
   \bigl((\tau_z^{-1}\mathbb M^-)\mathbb N^+
        -(\tau_{\bar z}^{-1}\mathbb N^+)\mathbb M^-\bigr)
 \notag\\
 &\quad+\mathfrak n^+e^-\widehat\otimes
      (\tau_{\bar z}^{-2}\mathbb N^+)\mathbb N^-
   +e^-\mathfrak n^+\widehat\otimes
      (\tau_z^{-1}\mathbb N^-)\mathbb N^+.
 \label{eq:local-Omega}
\end{align}
There is no further rank-one BCH term.
\end{theorem}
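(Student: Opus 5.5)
The plan is to expand $\mathbb R_q\mathbb A^-\mathbb R_q^{-1}$ with $\mathbb R_q=e^{\mathbb H^+}e^{-\mathbb H^-}$ and to show that the expansion terminates after the terms displayed in \eqref{eq:local-Omega}. We write
\[
 \mathbb R_q\mathbb A^-\mathbb R_q^{-1}
 =e^{\mathbb H^+}\bigl(e^{-\mathbb H^-}\mathbb A^-e^{\mathbb H^-}\bigr)e^{-\mathbb H^+}.
\]
First treat the inner conjugation. Since $\mathbb A^-$, $\mathbb H^-$, $\mathbb K^-$ all live in the minus frame, $e^{-\mathbb H^-}\mathbb A^-e^{\mathbb H^-}=\mathbb A^-+[\mathbb A^-,\mathbb H^-]+\tfrac12[[\mathbb A^-,\mathbb H^-],\mathbb H^-]+\cdots$. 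On a rank-one block with one fermionic pair, $\mathbb N^-$ is nilpotent of order two (Proposition~\ref{prop:finite-seed}) and $(\mathfrak n^-)^2=0$ because it is a Grassmann contraction (Theorem~\ref{thm:capelli-pbw}). Also $[\mathbb M^-,\mathbb N^-]=0$. Together these give $[\mathbb K^-,\mathbb H^-]=0$, so the inner conjugation equals $\mathbb A^-+\mathbb K^-$, with $\mathbb K^-$ as in \eqref{eq:Kminus-local}.

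Next conjugate $\mathbb A^-+\mathbb K^-$ by $e^{\mathbb H^+}$. For $\mathbb A^-$, the first commutator $[\mathbb H^+,\mathbb A^-]$ is exactly Proposition~\ref{prop:first-local} with $i=j$, which gives the second and third lines of \eqref{eq:local-Omega}. For $\mathbb K^-$, I would compute $[\mathbb H^+,\mathbb K^-]$ term by term, using two tools:
\begin{itemize}
\item the Euler shift rules \eqref{eq:euler-shift-n}, to move the spectral coefficients $\mathbb M^-,\mathbb N^\pm$ to the right of $\mathfrak n^\pm$ and $e^-$;
\item graded tensor signs, so that $\mathfrak n^+\widehat\otimes\mathbb N^+$ against $\mathfrak n^-\widehat\otimes\mathbb M^-$ yields the $\mathfrak n^+\mathfrak n^-$ line with shifted coefficients $\tau_z^{-1}\mathbb M^-$ and $\tau_{\bar z}^{-1}\mathbb N^+$.
\end{itemize}
The $e^-\widehat\otimes\mathbb N^-$ part likewise produces the two terms of the last line. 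The order-two shift $\tau_{\bar z}^{-2}$ arises because $e^-$ contains the degree-lowering $d^-$ composed with $\mathfrak n^-$.

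The remaining task is termination: all double commutators $[\mathbb H^+,[\mathbb H^+,\cdot]]$ and the commutator of $\mathbb H^+$ with the newly generated pieces must vanish. Here I would use that every surviving term already contains $\mathfrak n^+$ (with $(\mathfrak n^+)^2=0$ on one label) together with the fermionic factor $\mathbb N^+$. A further $\mathbb N^+$ then kills it by $(\mathbb N^+)^2=0$ and the branch-diagonal commutation of $\mathbb M^+$. The terms without $\mathfrak n^+$ are $\mathbb K^-$ and $-\boldsymbol\Lambda\widehat\otimes\mathbb N^+$. For these I would check directly that a second bracket with $\mathbb H^+$ produces products $\mathbb N^+\mathbb N^+$, or $\mathfrak n^+\mathfrak n^+$ up to spectral shifts, and so vanishes.

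I expect this step to be the main obstacle. The shifted coefficients $\tau\mathbb N^+$ differ from $\mathbb N^+$, so the nilpotency $(\mathbb N^+)^2=0$ is not literally applicable. I would try to use that $\mathbb N^+=-\mathbb h^+\mathbb M^+$ with $\mathbb h^+$ mapping into the occupied slots. Any product $\mathbb h^+(\cdot)\mathbb h^+$ with diagonal middle factor then vanishes slotwise, independently of the spectral shift. This slotwise vanishing is what kills the would-be BCH remainder.
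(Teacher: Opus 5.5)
Your outline is the paper's own: the inner conjugation truncates to $\mathbb A^-+\mathbb K^-$ (your $[\mathbb K^-,\mathbb H^-]=0$ is equivalent to the paper's vanishing of $\mathbb H^-\mathbb A^-\mathbb H^-$). The outer conjugation by $e^{\mathbb H^+}=I+\mathbb H^+$ is expanded to second order, and the first-order terms come from Proposition~\ref{prop:first-local} plus graded multiplication with Euler shifts. The gap is the termination step, which you flag but do not close.

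Since $(\mathbb H^+)^2=0$, the second-order term is exactly $-\mathbb H^+(\mathbb A^-+\mathbb K^-)\mathbb H^+$. The piece coming from the summand $e^-\widehat\otimes\mathbb N^-$ of $\mathbb K^-$ is, up to sign and Euler shifts of the coefficients, $\mathfrak n^+e^-\mathfrak n^+\widehat\otimes\mathbb N^+\mathbb N^-\mathbb N^+$. In your bookkeeping the same product appears when $\mathbb H^+$ is bracketed with the $\mathfrak n^+e^-$ and $e^-\mathfrak n^+$ terms of \eqref{eq:local-Omega}. Neither of your mechanisms kills it.

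The bosonic factor is $\mathfrak n^+e^-\mathfrak n^+=\mathfrak n^+[e^-,\mathfrak n^+]=-\mathfrak n^+[\boldsymbol\Lambda^{+|-},\mathfrak n^-]$. For one label, $[\boldsymbol\Lambda^{+|-},\mathfrak n^-]=(m+E)^{-1}\varepsilon_{\bar z}\Delta\mathfrak s^-$, and this survives composition with $\mathfrak n^+$: for instance $\mathfrak n^+e^-\mathfrak n^+$ sends $f_1f_2^\dagger z_{11}^2\bar z_{11}\bar z_{12}$ to $-2/(m^2(m+1)^2)$. So $(\mathfrak n^+)^2=0$ is of no help. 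On the fermionic side the two factors $\mathbb N^+$ are not adjacent, so $(\mathbb N^+)^2=0$ does not apply. The middle factor $\mathbb N^-=-\mathbb h^-\mathbb M^-$ is not diagonal, so your slotwise argument ``with diagonal middle factor'' does not apply either. In particular, your claim that a second bracket of $\mathbb K^-$ only produces $\mathbb N^+\mathbb N^+$ or $\mathfrak n^+\mathfrak n^+$ is false for this component.

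The missing observation is the four-slot support property the paper invokes. The map $\mathbb N^+=-\mathbb h^+\mathbb M^+$ sends each charge block into $\operatorname{span}\{e_\vartheta,e_2\}$ and annihilates that span. Every minus seed map preserves this span, because it only inserts or removes $\bar\vartheta$. Up to Cartan coefficients, $\mathbb N^-$ sends $e_\vartheta\mapsto e_2$ and $e_2\mapsto0$, $\mathbb Q^-$ sends $e_2\mapsto e_\vartheta$ and $e_\vartheta\mapsto0$, and $\mathbb M^\pm$ are diagonal. Hence $\mathbb N^+Y\mathbb N^+=0$ for every $Y$ that preserves $\vartheta$-occupancy, whatever Euler shifts are attached. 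This disposes of all four sandwiches $(\mathbb N^+)^2$, $\mathbb N^+\mathbb Q^-\mathbb N^+$, $\mathbb N^+\mathbb M^-\mathbb N^+$ and $\mathbb N^+\mathbb N^-\mathbb N^+$ at once. Replacing ``diagonal'' by ``$\vartheta$-occupancy preserving'' in your last paragraph completes the argument.

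A smaller point: Theorem~\ref{thm:capelli-pbw} identifies $\mathfrak n^\pm$ with Grassmann contractions only on standard radial words. It therefore does not give $(\mathfrak n^\pm)^2=0$ on an arbitrary polynomial block. Cite Theorem~\ref{thm:grassmann} instead, or note directly that $\mathfrak n^+=(m+E)^{-1}\mathfrak s$ with $\mathfrak s^2=0$ for one label.
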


\begin{proof}
Since $(\mathbb H^\pm)^2=0$, the conjugation by $I+\mathbb H^-$
contains at most the sandwich $\mathbb H^-\mathbb A^-\mathbb H^-$.  Its
bosonic-derivative part contains $(\mathbb N^-)^2$, whereas its fermionic
part contains $(\mathfrak n^-)^2$; both vanish.  Hence
$(\mathbb U^-)^{-1}\mathbb A^-\mathbb U^-=\mathbb A^-+\mathbb K^-$.  Conjugate
this by $\mathbb U^+=I+\mathbb H^+$ and expand the finite BCH expression
\[
 \boldsymbol\Omega_q^{-|+}
 =\mathbb K^-+[\mathbb H^+,\mathbb A^-+\mathbb K^-]
 +\frac12[\mathbb H^+,[\mathbb H^+,\mathbb A^-+\mathbb K^-]].
\]
The commutator with $\mathbb A^-$ is Proposition~\ref{prop:first-local}.
For the two terms of \eqref{eq:Kminus-local}, graded multiplication and
\eqref{eq:euler-shift-n} give the last three terms of
\eqref{eq:local-Omega}.  Every seed sandwich in the second BCH commutator
contains one of
$(\mathbb N^+)^2$,
$\mathbb N^+\mathbb Q^-\mathbb N^+$,
$\mathbb N^+\mathbb M^-\mathbb N^+$, or
$\mathbb N^+\mathbb N^-\mathbb N^+$; the four-slot support of the seed makes
all of them zero.  Hence the displayed formula is complete.
\end{proof}

The preceding formula is local in the label but already displays the two
normal-ordering effects that persist for arbitrary $N$: Euler shifts of
seed coefficients and derivatives of the inverse Capelli matrix.  The latter
are controlled by the following exact identities.  Write
$d_k^+:=\partial_{Z_k}$.

\begin{proposition}[Capelli spin--Cartan identities]\label{prop:spin-cartan}
For all labels $i,k$ in the stable range,
\begin{align}
 [d_k^+,\mathcal C_z]_{i\ell}
 &=\delta_{k\ell}d_i^+,\label{eq:d-C}\\
 [d_k^+,\mathcal C_z^{-1}]_{i\ell}
 &=-\sum_{r=1}^N
  (\mathcal C_z^{-1})_{ir}d_r^+
  (\mathcal C_z^{-1})_{k\ell}.
 \label{eq:d-Cinv}
\end{align}
Consequently the even same-polarization matrix
\begin{equation}\label{eq:spin-cartan}
 \mathfrak E_{ki}^+:=\{d_k^+,\mathfrak n_i^+\}
 =-\sum_{r=1}^N
  (\mathcal C_z^{-1})_{ir}d_r^+\mathfrak n_k^+
\end{equation}
and the normalized mixed Laplacian column satisfies
\begin{equation}\label{eq:d-Lambda}
 [d_k^+,\boldsymbol\Lambda_{ij}^{+|-}]
 =-\sum_{r=1}^N
  (\mathcal C_z^{-1})_{ir}d_r^+
  \boldsymbol\Lambda_{kj}^{+|-}.
\end{equation}
The Hermitian-conjugate formulas hold in the minus polarization.
\end{proposition}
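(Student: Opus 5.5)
The plan is to reduce all four identities to one elementary commutator, \eqref{eq:d-C}, and then use matrix-inverse calculus on finite homogeneous shells. Throughout, $d_k^+=\sum_a f_a^\dagger\partial_{z_{ka}}$ is treated as $d_k^+I_N$ when it acts on $N$-columns. The entries of $\mathcal C_z$ are even scalar operators that commute with the Witt symbols.

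\emph{Step 1 (the commutator \eqref{eq:d-C}).} Write $(\mathcal C_z)_{i\ell}=m\delta_{i\ell}+\mathcal E^z_{\ell i}=m\delta_{i\ell}+\sum_a z_{\ell a}\partial_{z_{ia}}$. The scalar part commutes with $d_k^+$. Since $f_a^\dagger$ commutes with scalar operators,
\[
 [f_b^\dagger\partial_{z_{kb}},\,z_{\ell a}\partial_{z_{ia}}]
 =\delta_{k\ell}\delta_{ab}\,f_b^\dagger\partial_{z_{ia}} .
\]
Summing over $a,b$ gives $\delta_{k\ell}d_i^+$.

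\emph{Step 2 (the inverse, \eqref{eq:d-Cinv}).} The operator $d_k^+$ lowers holomorphic degree by one. It therefore maps each homogeneous shell to the next lower one. By Lemma~\ref{lem:label-column}, $\mathcal C_z$ is invertible on both shells, and its inverse is a shell-wise polynomial in $\mathcal C_z$. So the identity
\[
 d_k^+\mathcal C_z^{-1}-\mathcal C_z^{-1}d_k^+
 =\mathcal C_z^{-1}\bigl(\mathcal C_z d_k^+-d_k^+\mathcal C_z\bigr)\mathcal C_z^{-1}
\]
holds as an identity of operator matrices between shells; no commutativity of the entries is needed. Taking the $(i,\ell)$ entry and inserting Step 1 gives
\[
 -\sum_{r,s}(\mathcal C_z^{-1})_{ir}\,\delta_{ks}\,d_r^+\,(\mathcal C_z^{-1})_{s\ell},
\]
which is \eqref{eq:d-Cinv}.

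\emph{Step 3 (the spin--Cartan matrix \eqref{eq:spin-cartan}).} Expand $\mathfrak n_i^+=\sum_\ell(\mathcal C_z^{-1})_{i\ell}\mathfrak s_\ell$. Because $(\mathcal C_z^{-1})_{i\ell}$ is even and $d_k^+$ is odd, the graded Leibniz rule gives
\[
 \{d_k^+,\mathfrak n_i^+\}
 =\sum_\ell[d_k^+,(\mathcal C_z^{-1})_{i\ell}]\,\mathfrak s_\ell
 +\sum_\ell(\mathcal C_z^{-1})_{i\ell}\{d_k^+,\mathfrak s_\ell\}.
\]
I then claim that $\{d_k^+,\mathfrak s_\ell\}=0$. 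The scalar derivatives commute, so this reduces to $\{f_a^\dagger,\delta_b\}=0$. That holds because $\delta_b$ is an odd derivation of the symbol algebra with $\delta_b(f_a^\dagger)=0$, so $\delta_b(f_a^\dagger X)=-f_a^\dagger\delta_bX$. With the claim, Step 2 turns the first sum into
\[
 -\sum_r(\mathcal C_z^{-1})_{ir}d_r^+\sum_\ell(\mathcal C_z^{-1})_{k\ell}\mathfrak s_\ell
 =-\sum_r(\mathcal C_z^{-1})_{ir}d_r^+\mathfrak n_k^+ .
\]

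\emph{Step 4 (the mixed Laplacian \eqref{eq:d-Lambda}).} Use \eqref{eq:Lambda-multilabel}. The operator $\Delta_{\ell j}$ is even and purely scalar-differential, so it commutes with $d_k^+$. Hence only the derivative of $\mathcal C_z^{-1}$ contributes, and Step 2 gives \eqref{eq:d-Lambda} after resumming $\sum_\ell(\mathcal C_z^{-1})_{k\ell}\Delta_{\ell j}=\boldsymbol\Lambda_{kj}^{+|-}$.

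\emph{Minus polarization.} The minus formulas follow by applying the order-preserving conjugation $\ddagger$. It exchanges $z\leftrightarrow\bar z$, $f\leftrightarrow f^\dagger$ and $\mathcal C_z\leftrightarrow\mathcal C_{\bar z}$. Because $\ddagger$ preserves the order of products, the identities transfer verbatim.

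The main obstacle is the vanishing $\{d_k^+,\mathfrak s_\ell\}=0$ in Step 3. It depends on which model the contraction $\delta_a$ acts in. In the exterior-symbol model it is immediate, as above. In the Clifford model, left multiplication by $f_a^\dagger$ must be transported through the Chevalley symbol $\sigma$. I would check this directly: left Clifford multiplication by $f_a^\dagger$ has symbol $f_a^\dagger\wedge{}+\iota$, where $\iota$ contracts against $f_a$. The contraction part does not interact with $\delta_b$, because $\delta_b$ also only sees the $f$'s and two contractions anticommute. The wedge part anticommutes with $\delta_b$ because $\delta_b(f_a^\dagger)=0$.

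A secondary point is that in Step 2 the inverse matrices must be taken on the correct source and target shells. This is handled by the shell-wise invertibility in Lemma~\ref{lem:label-column}.
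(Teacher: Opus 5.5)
Your proof is correct and follows the paper's argument essentially step for step. The steps are: the commutator $[d_k^+,\mathcal E^z_{\ell i}]=\delta_{k\ell}d_i^+$; the inverse-commutator identity $[D,C^{-1}]=-C^{-1}[D,C]C^{-1}$; the anticommutation $\{d_k^+,\mathfrak s_\ell\}=0$ substituted into $\mathfrak n_i^+=\sum_\ell(\mathcal C_z^{-1})_{i\ell}\mathfrak s_\ell$; and $[d_k^+,\Delta_{\ell j}]=0$ applied to \eqref{eq:Lambda-multilabel}. The paper only asserts two points that you justify, and both of your justifications are right: the shell-by-shell validity of the inverse identity, and $\{f_a^\dagger\cdot,\delta_b\}=0$ in the Clifford (Chevalley) model, where the contraction part of $f_a^\dagger\cdot$ is $\tfrac12\delta_a$.
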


\begin{proof}
Coordinate differentiation gives
$[d_k^+,\mathcal E^z_{\ell i}]=\delta_{k\ell}d_i^+$, which is
\eqref{eq:d-C}.  The inverse commutator identity
$[D,C^{-1}]=-C^{-1}[D,C]C^{-1}$ gives \eqref{eq:d-Cinv}.  Since the
unnormalized contractions $\mathfrak s_\ell$ anticommute with every $d_k^+$,
substitution into $\mathfrak n_i^+=\sum_\ell
(\mathcal C_z^{-1})_{i\ell}\mathfrak s_\ell$ yields
\eqref{eq:spin-cartan}.  Finally $d_k^+$ commutes with each scalar
$\Delta_{\ell j}$, so the same inverse-matrix calculation applied to
\eqref{eq:Lambda-multilabel} proves \eqref{eq:d-Lambda}.
\end{proof}

Appendix~\ref{app:mixed-normal-order} applies these identities and the Euler
shift rule to \eqref{eq:local-Omega} term by term.  It gives the complete
rank-one one-pair formula for
$\{\mathbb A^+,\boldsymbol\Omega_q^{-|+}\}$; hence no normal-ordering step is
the remaining terms are handled by the factorization below.

This motivates the mixed $q$-Laplacian matrix
\begin{equation}\label{eq:Lq}
 \mathbb L_{ij}^{(q)}:=\{\mathbb D_{i,q}^+,\mathbb D_{j,q}^-\}.
\end{equation}
It is polynomial-preserving, $U(m)$- and label-covariant, obeys
$(\mathbb L_{ij}^{(q)})^\ddagger=\mathbb L_{ji}^{(q)}$, and tends, as $q\to1$, to the classical mixed bracket obtained by
specializing $\mathbb A_i^\pm$ at $q=1$.

\subsection{Global Capelli--Grassmann relations}
The Capelli contractions are canonical on radial PBW words by
Theorem~\ref{thm:capelli-pbw}; we now show that their Grassmann relations
hold on the whole coordinate module.

\begin{lemma}[Label covariance]\label{lem:n-covariance}
For $\mathcal E^z_{ab}=\sum_c z_{ac}\partial_{z_{bc}}$,
\begin{equation}\label{eq:n-covariance}
 [\mathcal E^z_{ab},\mathfrak n_i^+]
 =-\delta_{ia}\mathfrak n_b^+.
\end{equation}
The conjugate identity holds for $\mathfrak n_i^-$.
\end{lemma}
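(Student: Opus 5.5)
Since $\mathfrak n_i^+=\sum_\ell(\mathcal C_z^{-1})_{i\ell}\mathfrak s_\ell$ is built from $\mathfrak s_\ell$ and the inverse of $\mathcal C_z$, the plan is to compute $[\mathcal E^z_{ab},\mathfrak s_\ell]$ and $[\mathcal E^z_{ab},\mathcal C_z]$ directly and then combine them. No Gram or normal-form input should be needed.

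\emph{Step 1: the unnormalized column.} Because $\delta_c$ commutes with the scalar operator $\mathcal E^z_{ab}$, only the commutator of coordinate operators matters. From
\[
 [z_{ac}\partial_{z_{bc}},\partial_{z_{\ell d}}]=-\delta_{a\ell}\delta_{cd}\partial_{z_{bc}}
\]
we get
\[
 [\mathcal E^z_{ab},\mathfrak s_\ell]=-\delta_{a\ell}\,\mathfrak s_b .
\]

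\emph{Step 2: the Capelli matrix.} The $\mathfrak{gl}_N$ relations give
\[
 [\mathcal E^z_{ab},\mathcal E^z_{\ell i}]=\delta_{b\ell}\mathcal E^z_{ai}-\delta_{ai}\mathcal E^z_{\ell b}.
\]
Writing $C:=\mathcal C_z$, with $C_{i\ell}=m\delta_{i\ell}+\mathcal E^z_{\ell i}$, this reads
\[
 [\mathcal E^z_{ab},C_{i\ell}]=\delta_{b\ell}C_{ia}-\delta_{ia}C_{b\ell}.
\]
The $m\delta$ terms cancel: they give $\delta_{b\ell}m\delta_{ia}-\delta_{ia}m\delta_{b\ell}=0$. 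In matrix form,
\[
 [\mathcal E^z_{ab},C]=C e_{ab}-e_{ab}C ,
\]
where $e_{ab}$ is the scalar matrix unit.

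\emph{Step 3: the inverse.} Work on each homogeneous shell, which $\mathcal E^z_{ab}$ preserves and on which $C^{-1}$ exists by Lemma~\ref{lem:label-column}. The identity $[D,C^{-1}]=-C^{-1}[D,C]C^{-1}$ then gives
\[
 [\mathcal E^z_{ab},C^{-1}]=e_{ab}C^{-1}-C^{-1}e_{ab}.
\]
Here I must check that the entries of $C$ are operators that need not commute with one another. The identity still holds, since it only uses $CC^{-1}=I$ as an operator matrix on the shell together with the derivation property of $\mathrm{ad}\,\mathcal E^z_{ab}$.

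\emph{Step 4: combine.} Use
\[
 [\mathcal E,\mathfrak n_i^+]=\sum_\ell[\mathcal E,(C^{-1})_{i\ell}]\mathfrak s_\ell+\sum_\ell(C^{-1})_{i\ell}[\mathcal E,\mathfrak s_\ell].
\]
The first sum is
\[
 \delta_{ia}\sum_\ell(C^{-1})_{b\ell}\mathfrak s_\ell-(C^{-1})_{ia}\mathfrak s_b=\delta_{ia}\mathfrak n_b^+-(C^{-1})_{ia}\mathfrak s_b .
\]
By Step 1, the second sum is $-(C^{-1})_{ia}\mathfrak s_b$.

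This gives $\delta_{ia}\mathfrak n_b^+-2(C^{-1})_{ia}\mathfrak s_b$, which is not the claimed formula. So I expect a sign or index-convention error, and I would recheck Step 2 first. The transpose in $\mathcal C_z=mI+(\mathcal E^z)^{\mathsf T}$ means the index order should make the two $(C^{-1})_{ia}\mathfrak s_b$ terms cancel. If instead $[\mathcal E,C]=e_{ab}C-Ce_{ab}$ with the opposite orientation, the first sum becomes $-\delta_{ia}\mathfrak n_b^++(C^{-1})_{ia}\mathfrak s_b$. This cancels the second sum exactly and yields $-\delta_{ia}\mathfrak n_b^+$, as claimed.

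\textbf{Main obstacle.} Carefully fixing the index orientation in Step 2 is the crux. I would verify it with a sanity check: \eqref{eq:n-Z} gives $\mathfrak n_i^+(Z_jF)=\delta_{ij}F$, and the lemma must be consistent with $[\mathcal E^z_{ab},L_j]=\delta_{bj}L_a$. Applying $\mathcal E^z_{ab}$ to that relation must reproduce the claimed sign, and this fixes the orientation unambiguously.

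\emph{Conjugate case.} The statement for $\mathfrak n_i^-$ follows by applying the order-preserving involution $\ddagger$.
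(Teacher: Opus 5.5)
Your route is exactly the paper's: compute $[\mathcal E^z_{ab},\mathfrak s_\ell]=-\delta_{a\ell}\mathfrak s_b$ and $[\mathcal E^z_{ab},\mathcal C_z]=\mathcal C_ze_{ab}-e_{ab}\mathcal C_z$, pass to $\mathcal C_z^{-1}$, and substitute into $\mathfrak n^+=\mathcal C_z^{-1}\mathfrak s$. Steps~1 and~2 are correct as written, including the orientation in Step~2, since $(\mathcal C_ze_{ab})_{i\ell}=(\mathcal C_z)_{ia}\delta_{b\ell}$ and $(e_{ab}\mathcal C_z)_{i\ell}=\delta_{ia}(\mathcal C_z)_{b\ell}$. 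The error is a sign slip in Step~3. The correct computation is
\[
 [\mathcal E^z_{ab},\mathcal C_z^{-1}]
 =-\mathcal C_z^{-1}\bigl(\mathcal C_ze_{ab}-e_{ab}\mathcal C_z\bigr)\mathcal C_z^{-1}
 =\mathcal C_z^{-1}e_{ab}-e_{ab}\mathcal C_z^{-1},
\]
not $e_{ab}\mathcal C_z^{-1}-\mathcal C_z^{-1}e_{ab}$. Equivalently, $X\mapsto[\mathcal E^z_{ab},X]$ and $X\mapsto[X,e_{ab}]$ are both derivations of the operator-matrix algebra; they agree on $\mathcal C_z$, hence also on $\mathcal C_z^{-1}$. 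With the correct sign, the first sum in Step~4 is $(\mathcal C_z^{-1})_{ia}\mathfrak s_b-\delta_{ia}\mathfrak n_b^+$ and the second is $-(\mathcal C_z^{-1})_{ia}\mathfrak s_b$. They add to $-\delta_{ia}\mathfrak n_b^+$, as claimed.

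As submitted, however, the proposal does not prove the lemma, because you misdiagnose the discrepancy. Changing Step~2 to $[\mathcal E^z_{ab},\mathcal C_z]=e_{ab}\mathcal C_z-\mathcal C_ze_{ab}$ would replace a correct identity by a false one. The conclusion would then follow only because two sign errors cancel. Your sanity check cannot repair this. On inputs $Z_jF$ it does confirm $[\mathcal E^z_{ab},\mathfrak n_i^+](Z_jF)=-\delta_{ia}\delta_{bj}F$. But it tests only the final identity, and only on a special class of inputs that does not span the Clifford-valued module (it misses scalars, for instance). It says nothing about the sign of an intermediate matrix identity, and that sign is fixed by direct computation. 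Keep Step~2 and correct Step~3; your argument then coincides with the paper's proof. Your treatment of the conjugate case via $\ddagger$ is fine.
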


\begin{proof}
The unnormalized column satisfies
$[\mathcal E^z_{ab},\mathfrak s_i]=-
\delta_{ia}\mathfrak s_b$ and
\[
 [\mathcal E^z_{ab},\mathcal C_z]
 =\mathcal C_ze_{ab}-e_{ab}\mathcal C_z.
\]
Hence
$[\mathcal E^z_{ab},\mathcal C_z^{-1}]
=\mathcal C_z^{-1}e_{ab}-e_{ab}\mathcal C_z^{-1}$.
Apply this to $\mathfrak n^+=\mathcal C_z^{-1}\mathfrak s$.
\end{proof}

\begin{theorem}[Global Capelli--Grassmann relations]\label{thm:grassmann}
In the stable range $m\ge2N$, on arbitrary Clifford-valued coordinate
polynomials,
\begin{align}
 \{\mathfrak n_i^+,\mathfrak n_j^+\}&=0,
 &\{\mathfrak n_i^-,\mathfrak n_j^-\}&=0,
 \label{eq:grass-same}\\
 \{\mathfrak n_i^+,\mathfrak n_j^-\}&=0.
 \label{eq:grass-cross}
\end{align}
In particular $(\mathfrak n_i^\pm)^2=0$ globally.
\end{theorem}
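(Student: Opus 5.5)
The cross relation \eqref{eq:grass-cross} should be the easy half, and I would prove it first by a pure commutation argument. Write $\mathfrak s_i^+:=\mathfrak s_i$. Let $\mathfrak s_j^-$ be its order-preserving Hermitian conjugate, built from the contractions of the $f_a^\dagger$ symbols and the derivatives $\partial_{\bar z_{ja}}$. Let $\mathcal C_{\bar z}$ be the conjugate Capelli matrix, so that $\mathfrak n^-=\mathcal C_{\bar z}^{-1}\mathfrak s^-$. The proof then rests on three facts:
\begin{itemize}
\item $\{\mathfrak s_i^+,\mathfrak s_j^-\}=0$, since the two odd contractions act on disjoint sets of Witt symbols and the coordinate derivatives commute;
\item the entries of $\mathcal C_z$ involve only $z$ and $\partial_z$, so they commute with $\mathfrak s_j^-$;
\item likewise the entries of $\mathcal C_{\bar z}$ commute with $\mathfrak s_i^+$ and with $\mathcal C_z$.
\end{itemize}
The inverses are finite spectral polynomials on homogeneous shells (Lemma~\ref{lem:label-column}), so these commutations pass to $\mathcal C_z^{-1}$ and $\mathcal C_{\bar z}^{-1}$. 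Then $\{\mathfrak n_i^+,\mathfrak n_j^-\}=\sum_{\ell,k}(\mathcal C_z^{-1})_{i\ell}(\mathcal C_{\bar z}^{-1})_{jk}\{\mathfrak s_\ell^+,\mathfrak s_k^-\}=0$.

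For the same-polarization relation \eqref{eq:grass-same}, I would start from the unnormalized identity $\{\mathfrak s_i,\mathfrak s_j\}=0$. It holds because $\mathfrak s_i\mathfrak s_j=\sum_{a,b}\delta_a\delta_b\partial_{z_{ia}}\partial_{z_{jb}}$, and exchanging the roles of $(i,a)$ and $(j,b)$ flips the sign through $\{\delta_a,\delta_b\}=0$. Next I would invert the column relation in the form $\mathfrak s=\mathcal C_z\mathfrak n$, that is, $\mathfrak s_i=m\mathfrak n_i^++\sum_\ell\mathcal E^z_{\ell i}\mathfrak n_\ell^+$. Substituting this into $0=\{\mathfrak s_i,\mathfrak s_j\}$, I would move every $\mathcal E^z$ to the left using Lemma~\ref{lem:n-covariance}, namely $\mathfrak n_\ell^+\mathcal E^z_{kj}=\mathcal E^z_{kj}\mathfrak n_\ell^++\delta_{\ell k}\mathfrak n_j^+$. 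The result should be a linear system for the even symmetric matrix $Y_{ij}:=\{\mathfrak n_i^+,\mathfrak n_j^+\}$ of the form
\[
 0=\sum_{\ell,k}\bigl(\mathcal C_z\otimes\mathcal C_z\bigr)_{(ij),(\ell k)}Y_{\ell k}+(\text{correction linear in }Y),
\]
where the correction comes from the Kronecker terms produced by the covariance rule. Concretely, this is a twisted operator $\mathbb T$ on the symmetric tensor square of the label column, and the system reads $\mathbb T\,Y=0$, with $\mathbb T$ commuting with degree. The case $i=j$ then gives $(\mathfrak n_i^+)^2=0$, and the minus statement follows by conjugation.

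The main obstacle is to show that $\mathbb T$ is injective on every homogeneous shell, since that forces $Y=0$. I would first try the spectral argument of Lemma~\ref{lem:label-column}, one level higher. Decompose the finite-dimensional shell into polynomial $GL(N)$-modules $F_\lambda$, and view $Y$ as taking values in $\mathrm{Sym}^2(\C^N)\otimes F_\lambda$. The Kronecker correction should convert $\mathcal C_z\otimes\mathcal C_z$ into the Capelli operator acting on a two-box Pieri extension. Its eigenvalues should then be products of the shape $(m+\lambda_r-r+1)(m+\lambda_s-s+c)$ with a small integer shift $c$, and the question is whether they stay strictly positive when $m\ge2N$; I expect this to be exactly where the stable range is used. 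If the eigenvalue bookkeeping is too messy, the fallback is induction on polynomial degree. Since $Y$ lowers degree by two, the equation expresses $\mathcal C_z$, applied slotwise, to $Y$ on a shell in terms of $Y$ on lower shells, which vanish by the inductive hypothesis; the invertibility of $\mathcal C_z$ from Lemma~\ref{lem:label-column} then finishes each step.
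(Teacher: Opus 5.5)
Your cross-relation argument coincides with the paper's. Your setup for \eqref{eq:grass-same} is also the paper's: substitute $\mathfrak s=\mathcal C_z\mathfrak n^+$ into $\{\mathfrak s_i,\mathfrak s_j\}=0$ and reorder with Lemma~\ref{lem:n-covariance}. But the proposal stops at the step that carries the content. You never compute the correction term, and neither of your proposed ways of inverting $\mathbb T$ is an argument as stated. The spectral route is only a guessed eigenvalue shape. The fallback induction fails outright. The Kronecker terms produced by $[\mathfrak n_\ell^+,\mathcal E^z_{kj}]=\delta_{\ell k}\mathfrak n_j^+$, and by reordering the noncommuting Capelli entries, are again quadratic in $\mathfrak n^+$. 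They lower degree by exactly two on the same input, so on a given shell the equation involves only $Y$ on that shell. There are no lower-shell terms to feed an induction, and invertibility of $\mathcal C_z$ alone does not close the system.

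The missing idea is that the system factors. Expand using $[\mathfrak n_a^+,(\mathcal C_z)_{jb}]=\delta_{ab}\mathfrak n_j^+$ together with the reordering rule $[(\mathcal C_z)_{ia},(\mathcal C_z)_{jb}]=\delta_{ib}(\mathcal C_z)_{ja}-\delta_{ja}(\mathcal C_z)_{ib}$, which your sketch omits. The terms $\pm\sum_a(\mathcal C_z)_{ja}\mathfrak n_i^+\mathfrak n_a^+$ then cancel, and with $F_{ab}=\{\mathfrak n_a^+,\mathfrak n_b^+\}$ one gets
\[
 0=\{\mathfrak s_i,\mathfrak s_j\}
 =\sum_a(\mathcal C_z)_{ia}\Bigl(\sum_b(\mathcal C_z+I_N)_{jb}F_{ab}\Bigr).
\]
Apply this to a homogeneous input. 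Invertibility of $\mathcal C_z$ on the output shell removes the outer factor, leaving $(\mathcal C_z+I_N)(F_{a1},\ldots,F_{aN})^{\mathsf T}=0$ for each $a$. Since $\mathcal C_z+I_N=(m+1)I_N+(\mathcal E^z)^{\mathsf T}$, the proof of Lemma~\ref{lem:label-column} with $m$ replaced by $m+1$ makes it invertible, hence $F_{ab}=0$. So no second-level Pieri analysis is needed. Moreover, the positivity involved (eigenvalues at least $m-N+2$) does not use $m\ge2N$, so your expectation that this is where the stable range enters is misplaced.
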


\begin{proof}
We prove the plus relation.  Put
$F_{ab}:=\{\mathfrak n_a^+,\mathfrak n_b^+\}$.  The unnormalized
contractions satisfy $\{\mathfrak s_i,\mathfrak s_j\}=0$.  From
Lemma~\ref{lem:n-covariance},
\[
 [\mathfrak n_a^+,(\mathcal C_z)_{jb}]
 =\delta_{ab}\mathfrak n_j^+.
\]
Using
$\mathfrak s_i=\sum_a(\mathcal C_z)_{ia}\mathfrak n_a^+$ and
\[
 [(\mathcal C_z)_{ia},(\mathcal C_z)_{jb}]
 =\delta_{ib}(\mathcal C_z)_{ja}
  -\delta_{ja}(\mathcal C_z)_{ib},
\]
a direct expansion gives
\[
 \sum_a(\mathcal C_z)_{ia}
 \left(\sum_b(\mathcal C_z)_{jb}F_{ab}+F_{aj}\right)=0.
\]
Invertibility of $\mathcal C_z$ reduces this to
$(\mathcal C_z+I_N)(F_{a1},\ldots,F_{aN})^{\mathsf T}=0$ for each $a$.
The proof of Lemma~\ref{lem:label-column} applies with $m$ replaced by $m+1$,
so $\mathcal C_z+I_N$ is invertible and all $F_{ab}$ vanish.  Hermitian
conjugation proves the minus relation.  For \eqref{eq:grass-cross}, write
$\overline{\mathcal C}_{\bar z}$ for the antiholomorphic Capelli matrix and
$\mathfrak s_j^-$ for the raw antiholomorphic column.  The two Capelli
matrices commute with the opposite raw contractions, and
$\{\mathfrak s_a^+,\mathfrak s_b^-\}=0$.  Hence
\[
 \{\mathfrak n_i^+,\mathfrak n_j^-\}
 =\sum_{a,b}(\mathcal C_z^{-1})_{ia}
   (\overline{\mathcal C}_{\bar z}^{-1})_{jb}
   \{\mathfrak s_a^+,\mathfrak s_b^-\}=0,
\]
which proves the cross relation explicitly.
\end{proof}

\subsection{Labelwise factorization and all filtrations}
For each label set
\begin{equation}\label{eq:Hipm}
 \mathbb H_i^\pm
 :=\mathfrak n_i^\pm\widehat\otimes\mathbb N_i^\pm,
 \qquad
 \mathbb H_q^\pm=\sum_{i=1}^N\mathbb H_i^\pm.
\end{equation}

\begin{proposition}[Commuting label factors]\label{prop:Hcommute}
For $\sigma,\tau\in\{+,-\}$,
\begin{equation}\label{eq:Hcommute}
 (\mathbb H_i^\sigma)^2=0,
 \qquad
 [\mathbb H_i^\sigma,\mathbb H_j^\tau]=0\quad(i\ne j).
\end{equation}
Consequently
\begin{equation}\label{eq:U-factor}
 \mathbb U_q^\pm=\prod_{i=1}^N(I+\mathbb H_i^\pm)
\end{equation}
and
\begin{equation}\label{eq:R-factor}
 \boxed{
 \mathbb R_q=\prod_{i=1}^N\mathbb R_i,
 \qquad
 \mathbb R_i:=(I+\mathbb H_i^+)(I-\mathbb H_i^-), }
\end{equation}
with mutually commuting $\mathbb R_i$ and
$\mathbb R_i^{-1}=(I+\mathbb H_i^-)(I-\mathbb H_i^+)$.
\end{proposition}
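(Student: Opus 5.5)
The plan is to work with the graded tensor product sign rule for $\mathbb H_i^\sigma=\mathfrak n_i^\sigma\widehat\otimes\mathbb N_i^\sigma$. Both tensor factors are odd, so
\[
 \mathbb H_i^\sigma\mathbb H_j^\tau=-(\mathfrak n_i^\sigma\mathfrak n_j^\tau)\widehat\otimes(\mathbb N_i^\sigma\mathbb N_j^\tau).
\]
Here the sign comes from moving the odd $\mathbb N_i^\sigma$ past the odd $\mathfrak n_j^\tau$. The only caveat is that $\mathbb N_i^\sigma$ carries a spectral coefficient depending on the opposite Euler shell. This has to be normal-ordered past $\mathfrak n_j^\tau$ using \eqref{eq:euler-shift-n}, so the identity holds up to the resulting Euler shifts, which will be tracked.

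\emph{Squares.} For $i=j$ and $\sigma=\tau$ we get $(\mathfrak n_i^\sigma)^2\widehat\otimes(\dots)$, possibly with a shifted coefficient. By Theorem~\ref{thm:grassmann}, $(\mathfrak n_i^\sigma)^2=0$ globally, so $(\mathbb H_i^\sigma)^2=0$ regardless of the fermionic factor. Alternatively, one can use $(\mathbb N_i^\sigma)^2=0$ from Proposition~\ref{prop:finite-seed}.

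\emph{Commutators for $i\ne j$.} Adding the two orders gives
\[
 [\mathbb H_i^\sigma,\mathbb H_j^\tau]=-\mathfrak n_i^\sigma\mathfrak n_j^\tau\widehat\otimes\mathbb N_i^\sigma\mathbb N_j^\tau+\mathfrak n_j^\tau\mathfrak n_i^\sigma\widehat\otimes\mathbb N_j^\tau\mathbb N_i^\sigma.
\]
By Theorem~\ref{thm:grassmann}, $\mathfrak n_j^\tau\mathfrak n_i^\sigma=-\mathfrak n_i^\sigma\mathfrak n_j^\tau$ for all $\sigma,\tau$, including the cross case \eqref{eq:grass-cross}. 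So the commutator equals $-\mathfrak n_i^\sigma\mathfrak n_j^\tau\widehat\otimes\{\mathbb N_i^\sigma,\mathbb N_j^\tau\}$. The seeds attached to distinct bosonic labels act as odd operators on different graded tensor factors $\mathscr F_i$, $\mathscr F_j$, so they anticommute and the commutator vanishes.

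\emph{Main obstacle: the spectral coefficient.} The step I expect to be hardest is that the coefficient $B_q^{(k)}(E_{\bar z})$ inside $\mathbb N_i^+$ is the \emph{local} Euler degree of label $i$. One must check that $\mathfrak n_j^\tau$ with $j\ne i$ does not shift it. I would argue as follows. The holomorphic $\mathfrak n_j^+$ does not change antiholomorphic degrees at all. The antiholomorphic $\mathfrak n_j^-$ lowers only the label-$j$ antiholomorphic Euler degree: by Lemma~\ref{lem:n-covariance}, $\mathcal C_{\bar z}^{-1}$ commutes with the diagonal Euler operators $\mathcal E^{\bar z}_{ii}$ up to terms affecting only the shifted index. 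So the label-$i$ local shell is preserved and no shift arises. If the local Euler degree is instead meant in a label-mixing sense, I would fall back on the label-covariance identity \eqref{eq:n-covariance} with $a=b=i$, which gives $[\mathcal E_{ii},\mathfrak n_j]=-\delta_{ij}\mathfrak n_j$. This shows the relevant diagonal Euler operator commutes with $\mathfrak n_j$ for $j\ne i$.

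\emph{Consequences.} With the $\mathbb H_i^\pm$ nilpotent and pairwise commuting, \eqref{eq:U-factor} follows:
\[
 \exp\Big(\sum_i\mathbb H_i\Big)=\prod_i\exp(\mathbb H_i)=\prod_i(I+\mathbb H_i).
\]
Moreover $(\mathbb U_q^-)^{-1}=\prod_i(I-\mathbb H_i^-)$. Since factors with distinct labels commute, including across polarizations, we can regroup
\[
 \prod_i(I+\mathbb H_i^+)\prod_i(I-\mathbb H_i^-)=\prod_i(I+\mathbb H_i^+)(I-\mathbb H_i^-),
\]
which is \eqref{eq:R-factor}. The factors $\mathbb R_i$ commute mutually for the same reason. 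Inverting each factor with $(I\pm\mathbb H)^{-1}=I\mp\mathbb H$ gives the stated $\mathbb R_i^{-1}$.
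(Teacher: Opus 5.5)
Your proposal is correct and follows essentially the same route as the paper. The squares vanish by Theorem~\ref{thm:grassmann} (together with $(\mathbb N_i^\pm)^2=0$), and for $i\ne j$ the Capelli--Grassmann sign cancels against the anticommutation of odd seeds in distinct fermionic label factors, after which the finite exponentials split and regroup. Your fallback for the spectral coefficient, $[\mathcal E_{ii},\mathfrak n_j^\tau]=0$ for $i\ne j$ from \eqref{eq:n-covariance}, is the paper's argument and the one to use; your primary version is imprecise, because $\mathcal C_{\bar z}^{-1}$ alone does not commute with the diagonal Euler operators, and only the normalized column has definite label weight.
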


\begin{proof}
The square vanishes by Theorem~\ref{thm:grassmann} together with
$(\mathbb N_i^\pm)^2=0$.  For $i\ne j$ the bosonic contractions anticommute,
while the odd seed maps in different fermionic label factors anticommute by
the graded tensor convention; the two signs therefore cancel in the even
products.  No additional spectral condition is needed for this cancellation.  A
plus seed coefficient depends only on the barred Euler degree of its own
label and a minus coefficient only on the unbarred degree; moreover
\eqref{eq:n-covariance} gives
$[\mathcal E^z_{jj},\mathfrak n_i^+]=0$ for $i\ne j$, with the conjugate
identity in the barred polarization.  Thus the relevant shifted coefficients
commute with the opposite contraction in a different label.  Hence the even
label generators commute.  Their finite exponentials split as in
\eqref{eq:U-factor}, and rearranging the distinct-label factors gives
\eqref{eq:R-factor}.
\end{proof}

For an operator $X$ define the local even defect
\begin{equation}\label{eq:Ki}
 \mathscr K_i(X):=\operatorname{Ad}_{\mathbb R_i}(X)-X.
\end{equation}
No infinite BCH expansion is involved.  If
$a_i=\mathbb H_i^+$ and $b_i=\mathbb H_i^-$, then exactly
\begin{equation}\label{eq:local-defect}
 (I+\mathscr K_i)(X)
 =(I+a_i)(I-b_i)X(I+b_i)(I-a_i).
\end{equation}

\begin{theorem}[Complete relative connection]\label{thm:Omega-complete}
For arbitrary finite label sets and finite fermionic support in the stable range,
\begin{align}
 \boldsymbol\Omega_{j,q}^{-\mid+}
 &=\left[\prod_{i=1}^N(I+\mathscr K_i)-I\right](\mathbb A_j^-)
 \label{eq:Omega-product}\\
 &=\sum_{\varnothing\ne I\subseteq\{1,\ldots,N\}}
 \mathscr K_I(\mathbb A_j^-),
 \qquad
 \mathscr K_I:=\prod_{i\in I}\mathscr K_i.
 \label{eq:Omega-subsets}
\end{align}
The order in $\mathscr K_I$ is irrelevant.  Hence every term of filtration
$\ge2$ is a finite subset product of the same labelwise defects and carries
no new independent coefficient.
\end{theorem}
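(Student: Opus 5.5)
The plan is to reduce everything to the labelwise factorization of Proposition~\ref{prop:Hcommute} and to the multiplicativity of the adjoint action. By definition \eqref{eq:R}, $\boldsymbol\Omega_{j,q}^{-\mid+}=\operatorname{Ad}_{\mathbb R_q}(\mathbb A_j^-)-\mathbb A_j^-$. Here $\operatorname{Ad}_{\mathbb R}(X)=\mathbb R X\mathbb R^{-1}$, and $\mathbb R_q$ is invertible, being a finite product of unipotent factors on every polynomial--finite-support block.

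\textbf{Step 1 (first line).} By \eqref{eq:R-factor}, $\mathbb R_q=\prod_i\mathbb R_i$ with mutually commuting $\mathbb R_i$, so
\[
\operatorname{Ad}_{\mathbb R_q}=\operatorname{Ad}_{\mathbb R_1}\circ\cdots\circ\operatorname{Ad}_{\mathbb R_N}.
\]
Since $\operatorname{Ad}_{\mathbb R_i}=I+\mathscr K_i$ by \eqref{eq:Ki}, this gives \eqref{eq:Omega-product} immediately. The finiteness is exact, with no BCH series involved, because \eqref{eq:local-defect} uses only $(\mathbb H_i^\pm)^2=0$ and the explicit inverse $\mathbb R_i^{-1}=(I+\mathbb H_i^-)(I-\mathbb H_i^+)$ from Proposition~\ref{prop:Hcommute}.

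\textbf{Step 2 (order independence).} Commuting invertible elements have commuting adjoint actions:
\[
\operatorname{Ad}_{\mathbb R_i}\operatorname{Ad}_{\mathbb R_k}=\operatorname{Ad}_{\mathbb R_i\mathbb R_k}=\operatorname{Ad}_{\mathbb R_k\mathbb R_i}=\operatorname{Ad}_{\mathbb R_k}\operatorname{Ad}_{\mathbb R_i}.
\]
Hence the operators $I+\mathscr K_i$ commute pairwise on the space of operators, and therefore so do the $\mathscr K_i=\operatorname{Ad}_{\mathbb R_i}-I$. This shows that $\mathscr K_I$ is well defined independently of the order of its factors.

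\textbf{Step 3 (subset expansion).} In the commutative algebra generated by the $\mathscr K_i$, expand $\prod_{i=1}^N(I+\mathscr K_i)=\sum_{I\subseteq\{1,\ldots,N\}}\mathscr K_I$. Subtracting the empty-set term $I$ gives \eqref{eq:Omega-subsets}.

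\textbf{Step 4 (filtration statement).} Each $\mathscr K_i$ is built only from $\mathbb H_i^\pm$. Via \eqref{eq:local-defect}, and with $\mathbb H_i^\pm=\mathfrak n_i^\pm\widehat\otimes\mathbb N_i^\pm$, each $\mathscr K_i$ raises the triangular filtration by at least one, since every nonzero term contains at least one factor $\mathbb H_i^\pm$. Therefore a filtration-$\ge2$ contribution arises only from $|I|\ge2$ products, or from the finitely many quadratic and higher terms inside a single $\mathscr K_i$. In either case it is built from the same local data $\mathbb H_i^\pm$ together with the seed operators and Capelli contractions that already appear, so no new coefficients enter.

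\textbf{Main obstacle.} The one delicate point is the operator-level meaning of Step~2. The factors $\mathbb R_i$ contain spectral coefficients that depend on opposite Euler shells, and these do not commute with arbitrary operators. The argument, however, only needs $[\mathbb R_i,\mathbb R_k]=0$ as operators on the full module, and this is exactly what Proposition~\ref{prop:Hcommute} supplies through Theorem~\ref{thm:grassmann} and \eqref{eq:n-covariance}. I would therefore spell out that the commutation of the adjoint actions follows formally from this, with no further conditions on $X=\mathbb A_j^-$.
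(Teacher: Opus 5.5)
Your proposal is correct and is essentially the paper's proof. The paper likewise uses the commuting factorization of Proposition~\ref{prop:Hcommute} to write $\operatorname{Ad}_{\mathbb R_q}=\prod_i\operatorname{Ad}_{\mathbb R_i}=\prod_i(I+\mathscr K_i)$, applies this to $\mathbb A_j^-$, subtracts $\mathbb A_j^-$, and expands the commuting product over subsets (in the paper ``filtration'' in the final sentence simply means the subset size $|I|$, as the $N=2$ example shows, so your Step~4 is more than is needed but harmless).
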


\begin{proof}
Proposition~\ref{prop:Hcommute} gives
$\operatorname{Ad}_{\mathbb R_q}
=\prod_i\operatorname{Ad}_{\mathbb R_i}
=\prod_i(I+\mathscr K_i)$.  Apply this identity to $\mathbb A_j^-$ and subtract
$\mathbb A_j^-$.  Expanding the finite commuting product gives
\eqref{eq:Omega-subsets}.
\end{proof}

For example, when $N=2$ the complete relative connection is already
\[
 \boldsymbol\Omega_{j,q}^{-\mid+}
 =\mathscr K_1(\mathbb A_j^-)+\mathscr K_2(\mathbb A_j^-)
  +\mathscr K_1\mathscr K_2(\mathbb A_j^-).
\]
The last summand is the entire filtration-two contribution; there is no
additional two-label coefficient or overlap tensor to determine.

The relative connection formula already closes the mixed algebra via
Theorem~\ref{thm:relative}.  For completeness, one can differentiate the
finite product without introducing an implicit remainder.  Put
\begin{equation}\label{eq:Theta}
 \boldsymbol\Theta_{k,i}^{+\mid-}
 :=\mathbb R_i^{-1}\mathbb A_k^+\mathbb R_i-\mathbb A_k^+,
\end{equation}
and write $\operatorname{ad}^{\rm s}_Y(X)=[Y,X]_{\rm s}$ for the superbracket.
Then
\begin{equation}\label{eq:deriv-K}
 [\operatorname{ad}^{\rm s}_{\mathbb A_k^+},\mathscr K_i]
 =\operatorname{Ad}_{\mathbb R_i}
  \operatorname{ad}^{\rm s}_{\boldsymbol\Theta_{k,i}^{+\mid-}}.
\end{equation}
For a fixed order of labels let
$\mathscr T_{<r}=\prod_{i<r}(I+\mathscr K_i)$,
$\mathscr T_{>r}=\prod_{i>r}(I+\mathscr K_i)$ and
$\mathscr T=\prod_i(I+\mathscr K_i)$.

\begin{theorem}[Closed all-filtration mixed table]\label{thm:mixed-closed}
In the ungauged plus seed frame,
\begin{equation}\label{eq:mixed-closed}
 \boxed{
 \mathbb L_{kj}^{\rm seed}
 +\{\mathbb A_k^+,\boldsymbol\Omega_{j,q}^{-\mid+}\}
 =\mathscr T(\mathbb L_{kj}^{\rm seed})
 +\sum_{r=1}^N
  \mathscr T_{<r}\operatorname{Ad}_{\mathbb R_r}
  \operatorname{ad}^{\rm s}_{\boldsymbol\Theta_{k,r}^{+\mid-}}
  \mathscr T_{>r}(\mathbb A_j^-). }
\end{equation}
Together with \eqref{eq:mixed-relative}, this determines every mixed
$\{\mathbb D_{k,q}^+,\mathbb D_{j,q}^-\}$ on arbitrary polynomial inputs.
No further higher-label overlap matrix or Euler-shell function is required.
\end{theorem}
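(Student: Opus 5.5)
The plan is to recognize the left-hand side of \eqref{eq:mixed-closed} as a single superbracket, and then commute $\operatorname{ad}^{\rm s}_{\mathbb A_k^+}$ through the finite product $\mathscr T$ by a telescoping Leibniz rule. No new normal-ordering computation is needed.

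\emph{Step 1: rewrite the left-hand side.} By \eqref{eq:R} and Theorem~\ref{thm:Omega-complete} we have $\mathbb A_j^-+\boldsymbol\Omega_{j,q}^{-\mid+}=\operatorname{Ad}_{\mathbb R_q}(\mathbb A_j^-)=\mathscr T(\mathbb A_j^-)$. By \eqref{eq:Lcl},
\[
 \mathbb L_{kj}^{\rm seed}+\{\mathbb A_k^+,\boldsymbol\Omega_{j,q}^{-\mid+}\}
 =\operatorname{ad}^{\rm s}_{\mathbb A_k^+}\bigl(\mathscr T(\mathbb A_j^-)\bigr).
\]
Here the superbracket of two odd operators is the anticommutator. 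Each $\operatorname{Ad}_{\mathbb R_i}$ is conjugation by an even invertible operator, so $\mathscr T(\mathbb A_j^-)$ is odd and the parity bookkeeping is consistent.

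\emph{Step 2: the single-factor identity \eqref{eq:deriv-K}.} Since $\mathbb R_i$ is even, conjugation by it preserves the superbracket. Hence, for any $X$,
\[
 \operatorname{ad}^{\rm s}_{\mathbb A_k^+}\operatorname{Ad}_{\mathbb R_i}(X)
 =\mathbb R_i\,[\mathbb R_i^{-1}\mathbb A_k^+\mathbb R_i,X]_{\rm s}\,\mathbb R_i^{-1}
 =\operatorname{Ad}_{\mathbb R_i}\operatorname{ad}^{\rm s}_{\mathbb A_k^++\boldsymbol\Theta_{k,i}^{+\mid-}}(X),
\]
using \eqref{eq:Theta}. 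Subtracting $\operatorname{Ad}_{\mathbb R_i}\operatorname{ad}^{\rm s}_{\mathbb A_k^+}$, and noting that $I$ commutes with everything, gives
\[
 [\operatorname{ad}^{\rm s}_{\mathbb A_k^+},I+\mathscr K_i]=\operatorname{Ad}_{\mathbb R_i}\operatorname{ad}^{\rm s}_{\boldsymbol\Theta_{k,i}^{+\mid-}}.
\]
All objects are finite: $\mathbb R_i$ and $\mathbb R_i^{-1}$ are the explicit finite products of Proposition~\ref{prop:Hcommute}.

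\emph{Step 3: telescoping.} For linear operators $D,T_1,\dots,T_N$ on the operator space, the commutator is a derivation of composition, so
\[
 [D,T_1\cdots T_N]=\sum_r T_1\cdots T_{r-1}[D,T_r]T_{r+1}\cdots T_N.
\]
Apply this with $D=\operatorname{ad}^{\rm s}_{\mathbb A_k^+}$ and $T_i=I+\mathscr K_i$ in the fixed label order. This yields
\[
 \operatorname{ad}^{\rm s}_{\mathbb A_k^+}\mathscr T
 =\mathscr T\operatorname{ad}^{\rm s}_{\mathbb A_k^+}
 +\sum_r\mathscr T_{<r}\operatorname{Ad}_{\mathbb R_r}\operatorname{ad}^{\rm s}_{\boldsymbol\Theta_{k,r}^{+\mid-}}\mathscr T_{>r}.
\]
Evaluating at $\mathbb A_j^-$ and using $\operatorname{ad}^{\rm s}_{\mathbb A_k^+}(\mathbb A_j^-)=\mathbb L_{kj}^{\rm seed}$ gives \eqref{eq:mixed-closed}. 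Commutativity of the $\operatorname{Ad}_{\mathbb R_i}$ (Proposition~\ref{prop:Hcommute}) is not needed for the identity itself. It is needed only so that $\mathscr T$ equals $\operatorname{Ad}_{\mathbb R_q}$ in any order, which makes the formula order-independent in its total. Combining with Theorem~\ref{thm:relative} then determines every $\{\mathbb D_{k,q}^+,\mathbb D_{j,q}^-\}$.

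\emph{Main obstacle.} The only delicate point is Step 2's sign and parity bookkeeping: confirming that $\operatorname{Ad}_{\mathbb R_i}$ is a superbracket automorphism. This requires checking that $\mathbb H_i^\pm=\mathfrak n_i^\pm\widehat\otimes\mathbb N_i^\pm$ is genuinely even, being a product of two odd factors, so that no Koszul sign appears. Also, the spectral Euler-shell coefficients inside $\mathbb N_i^\pm$ must be treated as part of the operator, not as scalars. I would check this first on a rank-one block against Theorem~\ref{thm:local-Omega}.
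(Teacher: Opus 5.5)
Your proposal is correct and follows the paper's proof: you rewrite the left-hand side as $\operatorname{ad}^{\rm s}_{\mathbb A_k^+}(\mathscr T(\mathbb A_j^-))$, commute $\operatorname{ad}^{\rm s}_{\mathbb A_k^+}$ past the finite product $\mathscr T=\prod_i(I+\mathscr K_i)$ by the Leibniz rule, and apply \eqref{eq:deriv-K} to each factor. The only addition is that you also derive \eqref{eq:deriv-K} from the evenness of $\mathbb R_i$; the paper states that identity without proof.
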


\begin{remark}[Meaning of ``closed'']
Theorem~\ref{thm:mixed-closed} is not a claim that the mixed operators form a
finite-dimensional Lie superalgebra with scalar structure constants.  It is a
closure statement in the operator category used here: every mixed
anticommutator is an explicit finite composition of the already defined
Capelli contractions, four-slot and finite-support fermionic maps, Euler shifts and
classical mixed operators.  No new degreewise inversion or undetermined
coefficient is introduced by higher label filtration.
\end{remark}

\begin{proof}
Differentiate the finite commuting product
$\mathscr T=\prod_i(I+\mathscr K_i)$ by the odd derivation
$\operatorname{ad}^{\rm s}_{\mathbb A_k^+}$.  The zeroth-order term is
$\mathscr T(\mathbb L_{kj}^{\rm seed})$; when the derivation hits the $r$th
factor, use \eqref{eq:deriv-K}.  Summing the finitely many possibilities gives
\eqref{eq:mixed-closed}.
\end{proof}

Theorem~\ref{thm:mixed-closed} is an operator identity, not a formal
existence statement: every factor $\mathbb R_i$ is the finite four-slot
conjugation \eqref{eq:local-defect}, its derivative is controlled by
Proposition~\ref{prop:spin-cartan}, and the only one-label Euler shifts are
those displayed in Theorem~\ref{thm:local-Omega} and
Appendix~\ref{app:mixed-normal-order}.  Thus the formula can be evaluated on
an arbitrary polynomial without solving an additional overlap problem.

\subsection{Common complex structure and two full products}
Transport the undeformed localized Clifford--Weyl product by each total frame:
\begin{equation}\label{eq:starpm}
 F\star_q^\pm G
 :=\mathbb T_q^\pm\bigl((\mathbb T_q^\pm)^{-1}F\,
                         (\mathbb T_q^\pm)^{-1}G\bigr).
\end{equation}
Both are associative and unital and restrict to the common scalar product
\eqref{eq:scalar-product}.  The relative map
\begin{equation}\label{eq:Srel}
 \mathbb S_q:=(\mathbb T_q^-)^{-1}\mathbb T_q^+
 =\mathbb U_q^-(\mathbb U_q^+)^{-1}=\mathbb R_q^{-1}
\end{equation}
is multiplicative for the undeformed product if and only if
$\star_q^+=\star_q^-$.

Let $J$ be the classical Hermitian complex structure.  Every scalar Gram
factor is $J$-neutral; the Wick exponent removes one plus and one minus Witt
symbol while inserting a neutral Gram factor; and
$\mathfrak n_i^\pm\widehat\otimes\mathbb N_i^\pm$ replaces one bosonic factor
by a fermionic factor of the same Hermitian weight.  Therefore
\begin{equation}\label{eq:Jcommon}
 [\mathbb T_q^\pm,J]=0,
 \qquad
 J_q^+=J_q^-=J.
\end{equation}

For the multiplicativity test we use one explicit charge-zero boundary.
Fix one bosonic label and let $e_{\mathbf0}$ be the tensor product of the
empty-occupancy charge-zero vectors of the $n$ fermionic pairs.  Let
$e_{\vartheta,r}$ denote the vector with the $r$th pair in the
$\vartheta$-slot and all other pairs in the empty slot, and let
$\pi_{0;r}^+$ be the copy in the $r$th pair of the charge-zero coefficient
$\pi_0^+$ from the four-slot contracting homotopy of
Section~\ref{sec:fermionic}.  Finally, write $Z_{\rm b}$ for the single
linear bosonic Hermitian vector in this label.  The empty-occupancy branch
defect computed in Section~\ref{sec:fermionic}, together with
$\mathfrak n^+(Z_{\rm b})=1$, gives
\begin{equation}\label{eq:charge-zero-linear-defect}
 (\mathbb U_q^+)^{-1}(Z_{\rm b}e_{\mathbf0})
 =Z_{\rm b}e_{\mathbf0}
 -\frac{\delta_m(q)}{n}\sum_{r=1}^n
 e_{\vartheta,r}(\pi_{0;r}^+)^{-1},
 \qquad
 \delta_m(q):=\frac{2m-[2m]_q}{4}.
\end{equation}
This is the only local formula needed in the following obstruction.

\begin{theorem}[No common full transported product]\label{thm:no-common-product}
Assume $0<q<1$, $n\ge1$ and the stable hypotheses.  Then $\mathbb S_q$ is not
multiplicative for the undeformed localized Clifford--Weyl product.  Hence
\begin{equation}\label{eq:stars-distinct}
 \star_q^+\ne\star_q^-.
\end{equation}
Hermitian conjugation exchanges them:
\begin{equation}\label{eq:star-conj}
 (F\star_q^+G)^\ddagger=F^\ddagger\star_q^-G^\ddagger.
\end{equation}
\end{theorem}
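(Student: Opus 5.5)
The argument splits into three parts: a reduction of $\star_q^+=\star_q^-$ to multiplicativity of $\mathbb S_q$, an explicit product test that makes $\mathbb S_q$ fail to be multiplicative, and the conjugation identity.

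\emph{Reduction.} By definition \eqref{eq:starpm}, $\star_q^+=\star_q^-$ holds exactly when $\mathbb S_q=(\mathbb T_q^-)^{-1}\mathbb T_q^+$ is an automorphism of the undeformed localized Clifford--Weyl product. So \eqref{eq:stars-distinct} follows once we exhibit a single pair $F,G$ with
\[
 \mathbb S_q(FG)\ne\mathbb S_q(F)\,\mathbb S_q(G).
\]
By \eqref{eq:Srel}, $\mathbb S_q=\mathbb U_q^-(\mathbb U_q^+)^{-1}$, and the outer PBW gauge has cancelled. It is therefore enough to work in the ungauged triangular frames, with one bosonic label, on the charge-zero block.

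\emph{Test pair.} I would take $F=Z_{\rm b}$ and $G=e_{\mathbf0}$, so that $FG=Z_{\rm b}e_{\mathbf0}$.
\begin{itemize}
\item $\mathbb U_q^\pm$ fix $e_{\mathbf0}$, because $\mathfrak n^\pm$ annihilates scalars. Hence $\mathbb S_q(e_{\mathbf0})=e_{\mathbf0}$.
\item On the single vector $Z_{\rm b}$ with no fermionic factor, the seed $\mathbb N^+$ acts on the trivial fermionic vacuum. I would check that both transports fix $Z_{\rm b}\otimes 1$ itself, or else choose the factorization $Z_{\rm b}\cdot e_{\mathbf0}$ inside the tensor product so that $\mathbb S_q(Z_{\rm b})=Z_{\rm b}$.
\item Consequently the right-hand side of the test is just $Z_{\rm b}e_{\mathbf0}$.
\end{itemize}
On the left-hand side I would use \eqref{eq:charge-zero-linear-defect}. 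This gives
\[
 (\mathbb U_q^+)^{-1}(Z_{\rm b}e_{\mathbf0})=Z_{\rm b}e_{\mathbf0}-\frac{\delta_m(q)}{n}\sum_r e_{\vartheta,r}(\pi_{0;r}^+)^{-1}.
\]
Next apply $\mathbb U_q^-=I+\mathbb H^-+\dots$. The correction vectors $e_{\vartheta,r}$ carry no bosonic factor, so $\mathfrak n^-$ kills them and $\mathbb U_q^-$ leaves them fixed. On $Z_{\rm b}e_{\mathbf0}$, the operator $\mathfrak n^-$ annihilates the holomorphic $Z_{\rm b}$ by the cross relation and the antiholomorphic contraction, so $\mathbb U_q^-$ fixes this term too. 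Therefore
\[
 \mathbb S_q(Z_{\rm b}e_{\mathbf0})-Z_{\rm b}e_{\mathbf0}=-\frac{\delta_m(q)}{n}\sum_r e_{\vartheta,r}(\pi_{0;r}^+)^{-1}.
\]

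\emph{Nonvanishing of the defect.} The vectors $e_{\vartheta,r}$ are independent basis slots, and $(\pi_{0;r}^+)^{-1}=4/\ell_+$ is a unit of $\mathcal A_s^{\min}$. So the difference vanishes only if $\delta_m(q)=0$. But $[2m]_q<2m$ for $0<q<1$, hence $\delta_m(q)=(2m-[2m]_q)/4>0$, and the difference is nonzero. This proves that $\mathbb S_q$ is not multiplicative, and so $\star_q^+\ne\star_q^-$.

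\emph{Conjugation identity.} For \eqref{eq:star-conj}, the minus data are defined as order-preserving Hermitian conjugates of the plus data. This gives $(\mathbb U_q^+)^\ddagger=\mathbb U_q^-$, the PBW gauge is $\ddagger$-invariant since its weights $g_\alpha$ are real, and so $\mathbb T_q^-X^\ddagger=(\mathbb T_q^+X)^\ddagger$. Since $\ddagger$ is multiplicative by \eqref{eq:ddagger-order}, applying $\ddagger$ to the definition of $F\star_q^+G$ yields $F^\ddagger\star_q^-G^\ddagger$.

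\emph{Main obstacle.} The delicate step is the claim that $\mathbb U_q^\pm$ fix the two factors and that $\mathbb U_q^-$ leaves both $Z_{\rm b}e_{\mathbf0}$ and the corrections $e_{\vartheta,r}$ invariant. This requires the exact meaning of the product $Z_{\rm b}\cdot e_{\mathbf0}$ in the graded tensor algebra, the mechanism behind \eqref{eq:charge-zero-linear-defect}, and the behaviour of $\mathfrak n^-$ on holomorphic linear inputs. If the minus transport does act nontrivially on $Z_{\rm b}e_{\mathbf0}$, its own defect lies in the $\bar\vartheta$-slots and is again independent of the $\vartheta$-slot defect, so the argument still goes through.
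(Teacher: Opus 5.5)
\textbf{There is a genuine gap: your test pair cannot detect non-multiplicativity.}

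Your reduction to multiplicativity of $\mathbb S_q$ is sound, and so are your computation of $\mathbb S_q(Z_{\rm b}e_{\mathbf 0})$, the positivity of $\delta_m(q)$, the invertibility of $\pi_{0;r}^+=\ell_+/4$, and the conjugation identity. The problem is the pair $(F,G)=(Z_{\rm b},e_{\mathbf 0})$.

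By \eqref{eq:charge-basis}, $e_{0,0}=x_0=a^0=1$. So $e_{\mathbf 0}$ is the unit of the fermionic Clifford--Weyl factor, and $Z_{\rm b}e_{\mathbf 0}$ is literally the element $Z_{\rm b}\otimes 1$. There is no separate ``trivial vacuum'' on which the transports act differently.

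Consequently your claim $\mathbb S_q(Z_{\rm b})=Z_{\rm b}$ contradicts the very formula you use for the left-hand side. Indeed $\mathfrak n^+(Z_{\rm b})=1$ and $\mathbb N^+e_{\mathbf 0}=-\mathbb h^+\mathbb M^+e_{\mathbf 0}=-\frac{\delta_m(q)}{n}\sum_r e_{\vartheta,r}(\pi_{0;r}^+)^{-1}\ne 0$, so $(\mathbb U_q^+)^{-1}$ moves $Z_{\rm b}$.

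Structurally, you have shown $\mathbb S_q(e_{\mathbf 0})=e_{\mathbf 0}$, i.e. $\mathbb S_q$ is unital. For a unital map, $\mathbb S_q(F\cdot 1)=\mathbb S_q(F)\,\mathbb S_q(1)$ holds automatically. So your computation only proves $\mathbb S_q\ne I$, not failure of multiplicativity.

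\textbf{The missing idea is a second, non-unit factor that sees the $\vartheta$-slot correction.} The paper uses $B_{{\rm f},r}$.
\begin{itemize}
\item It has no bosonic factor, so it is fixed by $\mathbb S_q$.
\item It commutes with $Z_{\rm b}$ in the undeformed product.
\item In the $r$th pair, $e_{\vartheta,r}$ contains $e_{\vartheta,0}=\vartheta x_1=\vartheta a$, which carries Cartan weight. By \eqref{eq:cartan-weights}, $[e_{\vartheta,r}R(B_{{\rm f},r}),B_{{\rm f},r}]=-2i\,e_{\vartheta,r}R(B_{{\rm f},r})$.
\end{itemize}
Hence $[\mathbb S_q(Z_{\rm b}e_{\mathbf 0}),\mathbb S_q(B_{{\rm f},r})]=\frac{2i\delta_m(q)}{n}e_{\vartheta,r}(\pi_{0;r}^+)^{-1}\ne 0$. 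A multiplicative map would have to preserve the vanishing commutator, which gives the contradiction.

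The order of the factors matters here. Because the seed maps are right Cartan-linear, $\mathbb S_q(Z_{\rm b}B_{{\rm f},r})=\mathbb S_q(Z_{\rm b})B_{{\rm f},r}$ holds automatically. The failure sits in the product $B_{{\rm f},r}Z_{\rm b}$, and the commutator captures exactly that. Once this factor is inserted, your computation of $\mathbb S_q(Z_{\rm b}e_{\mathbf 0})$ and your nonvanishing argument for the coefficient carry over unchanged.
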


\begin{proof}
It is enough to work in the one-label charge-zero block just described.
Only the plus Capelli contraction acts on the linear bosonic input.  After
that contraction the bosonic coefficient is scalar, so the minus bosonic
contraction annihilates it.  Hence the minus triangular factor cannot alter
the correction in \eqref{eq:charge-zero-linear-defect}, and
\begin{equation}\label{eq:S-linear}
 \mathbb S_q(Z_{\rm b}e_{\mathbf0})
 =Z_{\rm b}e_{\mathbf0}
 -\frac{\delta_m(q)}{n}\sum_{r=1}^n
 e_{\vartheta,r}(\pi_{0;r}^+)^{-1}.
\end{equation}
The inverses exist in the factorwise localization of
Theorem~\ref{thm:min-local}.  Every
purely fermionic Cartan element is fixed by $\mathbb S_q$.  In the undeformed
product $[Z_{\rm b},B_{{\rm f},r}]=0$, but the Weyl weight relation gives
\[
 [e_{\vartheta,r}R(B_{{\rm f},r}),B_{{\rm f},r}]
 =-2i\,e_{\vartheta,r}R(B_{{\rm f},r}).
\]
Hence
\begin{equation}\label{eq:mult-defect}
 [\mathbb S_q(Z_{\rm b}e_{\mathbf0}),
   \mathbb S_q(B_{{\rm f},r})]
 =\frac{2i\delta_m(q)}{n}
 e_{\vartheta,r}(\pi_{0;r}^+)^{-1}\ne0.
\end{equation}
For $0<q<1$, $\delta_m(q)\ne0$.  A multiplicative map would preserve the
zero commutator, contradiction.  Equation \eqref{eq:star-conj} follows by
conjugating \eqref{eq:starpm}; $\ddagger$ is order preserving.
\end{proof}

Thus the stable algebraic outcome is one common scalar transported product,
one common classical complex structure, and two conjugate flat full
Clifford--Weyl products whose discrepancy is measured by the same relative
transport that controls the mixed anticommutator.

\section{Locality obstructions and structural consequences}\label{sec:obstructions}

The transported calculus is polynomial preserving, but its exact radial
coefficients already rule out two natural undeformed notions of strong
coordinate locality.

\begin{definition}\label{def:local-classes}
An operator is \emph{ordinary differential local} if it is a fixed finite sum
\[
 T=\sum_{|\alpha|+|\beta|\le R}
 P_{\alpha,\beta}(z,\bar z)
 \partial_z^\alpha\partial_{\bar z}^\beta,
\]
with $R$ independent of polynomial degree.  It is \emph{finite nonzero-shift
local} if it belongs to the algebra generated by polynomial multipliers,
ordinary derivatives, Berezin operators, Clifford--Weyl coefficient maps that
are independent of the bosonic polynomial degree, and a fixed finite collection of nonzero coordinate dilations and Jackson
differences.  Euler inverses, continuous dilation averages, spectral
projectors and global Fischer decompositions belong to neither class.
\end{definition}

\begin{theorem}[No fixed finite-order differential realization]
\label{thm:no-differential}
For $0<q<1$, no ordinary differential local operator can satisfy the exact
rank-one scalar radial restriction
\begin{equation}\label{eq:scalar-chain-no-go}
 T(H^re)=c_r Z^\dagger H^{r-1}e,
 \qquad c_r=\frac12[2r]_q,
 \qquad r\ge1,
\end{equation}
for a fixed fermionic coefficient vector $e$.
\end{theorem}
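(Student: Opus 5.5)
The plan is to show that any fixed finite-order differential operator, applied to the Gram powers $H^r e$, produces coefficients that depend polynomially on $r$. The prescribed coefficients $c_r=\frac12[2r]_q$ are bounded but not constant, so they cannot be polynomial in $r$ when $0<q<1$. Throughout we work with one bosonic label, $H=\sum_a z_a\bar z_a$ and $Z^\dagger=\sum_a\bar z_af_a^\dagger$. We allow the coefficients $P_{\alpha,\beta}$ of $T$ to be polynomials with values in the degree-independent Clifford--fermionic coefficient maps, and we write $R$ for the fixed order of $T$.

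\emph{Step 1 (structure of derivatives of $H^r$).} By induction on $|\alpha|+|\beta|$, using the chain rule, we show that for every multi-index
\[
 \partial_z^\alpha\partial_{\bar z}^\beta(H^r)
 =\sum_{k=0}^{|\alpha|+|\beta|}(r)_k\,H^{r-k}\,Q_{\alpha,\beta,k}(z,\bar z).
\]
Here $(r)_k=r(r-1)\cdots(r-k+1)$, and the polynomials $Q_{\alpha,\beta,k}$ are independent of $r$; they are built from derivatives of $H$, which are linear or constant. Each derivative either hits an existing $Q$, or lowers the power of $H$ and produces the factor $(r-k)$ together with a first derivative of $H$. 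Summing against the coefficients of $T$, and noting that the coefficient maps act on $e$ independently of $r$, we obtain for $r\ge R$
\[
 T(H^re)=\sum_{k=0}^R(r)_k\,H^{r-k}\,P_k,
\]
where the $P_k$ are fixed elements of $\scrP^{\rm Cl}_{1,1}$ tensored with the fermionic module, all independent of $r$.

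\emph{Step 2 (clearing the Gram power).} Assume \eqref{eq:scalar-chain-no-go} holds, and take $r\ge R$. Both sides contain the factor $H^{r-R}$. Multiplication by $H$ is injective, since the polynomial ring is a domain and the module is free over it, so we may cancel $H^{r-R}$. This gives, for all $r\ge R$,
\[
 \sum_{k=0}^R(r)_k\,H^{R-k}P_k=c_r\,Z^\dagger H^{R-1}e .
\]
The vector $v:=Z^\dagger H^{R-1}e$ is nonzero, because $Z^\dagger\neq0$, $e\neq0$ and the tensor product is free. Choose a linear functional $\lambda$, supported on the fixed finite-dimensional span of $v$ and the $H^{R-k}P_k$, with $\lambda(v)=1$. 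Applying $\lambda$ gives $c_r=p(r)$ for all $r\ge R$, where $p$ is a polynomial of degree at most $R$.

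\emph{Step 3 (contradiction).} For $0<q<1$ we have $0<c_r<\frac1{2(1-q)}$, so the sequence $c_r$ is bounded. A polynomial that is bounded on infinitely many integers must be constant. However, $c_{r+1}-c_r=\frac12(q^{2r}+q^{2r+1})>0$, so $c_r$ is strictly increasing and not constant. This contradiction proves the theorem.

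The main point needing care is Step 1. We must make sure that the Clifford and Berezin coefficient maps in $T$ do not secretly depend on $r$; the definition of ordinary differential locality fixes them independently of the degree. We must also make sure that the $r$-dependence enters only through the falling factorials, which is exactly what the chain-rule induction shows. The cancellation in Step 2 is legitimate because it is carried out inside the free module, not after projecting onto a particular degree.
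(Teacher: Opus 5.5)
Your proposal is correct and follows essentially the same route as the paper. Both expand derivatives of $H^r$ in falling factorials, conclude that $T$ produces a polynomial in $r$, and contradict the bounded, strictly increasing sequence $c_r$. The only cosmetic difference is how the scalar coefficient is extracted: you cancel $H^{r-R}$ in the free module (taking $R\ge1$) and apply a functional with $\lambda(v)=1$, whereas the paper restricts to the first coordinate axis and reads off the coefficient of $z_1^{r-1}\bar z_1^{r}$.
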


\begin{proof}
Repeated differentiation of $H^r$ has the form
\[
 \partial_z^\alpha\partial_{\bar z}^\beta H^r
 =\sum_{k=0}^{|\alpha|+|\beta|}
 r^{\underline k}H^{r-k}Q_{\alpha,\beta,k}(z,\bar z),
\]
with $Q_{\alpha,\beta,k}$ independent of $r$.  Restrict to the first coordinate
axis and extract the coefficient of $z_1^{r-1}\bar z_1^r$ in a fixed output
component.  Any fixed finite-order differential expression produces a
polynomial function of $r$.  Exact radial restriction forces that polynomial
to equal
\[
 c_r=\frac{1-q^{2r}}{2(1-q)}
\]
for every positive integer $r$.  This sequence is bounded and strictly
increasing, whereas a polynomial bounded on the positive integers is
constant.  Contradiction.
\end{proof}

The scalar chain itself is compatible with one Jackson dilation.  The stronger
finite-shift obstruction comes from the occupied scalar coefficient
\begin{equation}\label{eq:beta}
 \beta_r:=\frac{c_{r+1}}{r+1}
 =\frac{1-q^{2r+2}}{2(1-q)(r+1)}.
\end{equation}

\begin{definition}[Exponential-polynomial sequence]
\label{def:exponential-polynomial}
A sequence $(a_r)_{r\ge0}$ is an \emph{exponential polynomial} if
\[
 a_r=\sum_{j=1}^J p_j(r)\lambda_j^r
\]
for finitely many polynomials $p_j\in\C[r]$ and nonzero
$\lambda_j\in\C$.
\end{definition}

\begin{lemma}[Finite shifts give exponential polynomials]
\label{lem:finite-shift-exponential}
Let $T$ be finite nonzero-shift local in the sense of
Definition~\ref{def:local-classes}.  Fix a coefficient-space input, an output
coefficient functional, and integers $a,b$.  After restriction to the first
coordinate axis, the coefficient of
$z_1^{r+a}\bar z_1^{r+b}$ in the selected component of
$T(z_1^r\bar z_1^r v)$ is an exponential polynomial in $r$ on every range
where the displayed powers are nonnegative.  In particular, its ordinary
generating function is rational.
\end{lemma}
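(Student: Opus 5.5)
The plan is a structural induction over the generating set of the algebra in Definition~\ref{def:local-classes}. We track, for each generator and for compositions, how it acts on \emph{axis sequences}. Restrict to the first coordinate axis, meaning set all other coordinates to zero after applying $T$. Because multipliers and derivatives in transverse coordinates can feed back onto the axis, we work on a slightly larger family: inputs $z^\mu\bar z^\nu\,z_1^{r+a'}\bar z_1^{r+b'}v$, with $\mu,\nu$ ranging over finitely many transverse multi-indices and $a',b'$ bounded shifts. The claim to prove for this family is the following. Each fixed output coefficient (transverse monomial, axis shift $(a,b)$, coefficient functional) of $T$ applied to such an input, as a function of $r$, is a finite $\C$-linear combination of terms $p(r)\lambda^r$, and only finitely many input/output labels interact.

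The generators are handled one at a time. A polynomial multiplier shifts $(\mu,\nu,a',b')$ by a bounded amount and has $r$-independent coefficient. An ordinary derivative $\partial_{z_{1}}$ or $\partial_{\bar z_1}$ produces the factor $r+a'$ or $r+b'$, which is a polynomial in $r$; transverse derivatives produce constants. Berezin operators and the degree-independent Clifford--Weyl coefficient maps act only on the coefficient space. They are therefore finite matrices, with entries in the localized coefficient ring, that are independent of $r$. A coordinate dilation $z_{1}\mapsto tz_{1}$, or a dilation by some $t$ in a transverse coordinate, multiplies the monomial by $t^{r+a'}\cdot(\text{const})$, which is $\lambda^r$ times a constant with $\lambda=t\ne0$. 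A Jackson difference $\delta_{x,t}$ sends $x^k$ to $[k]_t x^{k-1}$, and $[k]_t=(1-t^k)/(1-t)$ is an exponential polynomial in $k$, hence in $r$ when $k=r+a'$. If $t=1$ is allowed as a limit it gives $k$, which is again polynomial.

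Closure follows from two facts: exponential polynomials form a ring under pointwise product, and sums over the finitely many intermediate labels preserve the class. Every composite in the generated algebra is a finite sum of finite words, so by induction on word length each word sends a labelled input to finitely many labelled outputs, with exponential-polynomial coefficients. Summing finitely many words finishes the argument. The statement holds on each range where the exponents are nonnegative, because we never divide by $r$-dependent quantities; factors like $r+a'$ that vanish at the boundary are still polynomials. The rational generating function then follows from $\sum_r r^k\lambda^r x^r$ being rational.

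I expect the main obstacle to be bookkeeping rather than analysis. Operators in the algebra need not preserve the axis, so one cannot simply restrict first and apply afterwards. The fix is the enlarged label set above: the total transverse degree and the axis shifts change by at most a bounded amount per generator. A word of fixed length therefore only visits finitely many labels, and this has to be set up before the induction. Noncommutativity of the coefficient maps with the Weyl/Cartan localization is harmless, since those maps never see $r$.
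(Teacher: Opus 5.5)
Your proposal is correct and follows essentially the same route as the paper's proof. Both arguments follow a single word in the generators: multipliers shift exponents by fixed amounts, derivatives give polynomial factors, dilations give $t^{r+s}$ and Jackson differences give $[r+s]_t$; one then closes under finite products and sums and reads off rationality from $\sum_r r^k\lambda^r x^r$. Your enlarged label set of transverse monomials simply makes explicit the restriction-to-the-first-axis bookkeeping that the paper leaves implicit. One minor inaccuracy: the coefficient-space maps need not be finite matrices. This does not matter, because on the fixed input $v$ they contribute a fixed $r$-independent scalar.
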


\begin{proof}
It is enough to follow one word in the permitted generators.  Polynomial
multiplication shifts the exponents by fixed integers, whereas ordinary
partial derivatives contribute falling factorials in $r$.  A fixed dilation
contributes a factor $t^{r+s}$, and a Jackson difference contributes
$[r+s]_t=(1-t^{r+s})/(1-t)$ followed by a fixed exponent shift.  Hence each
generator preserves finite sums $\sum_jp_j(r)\lambda_j^r$, and so do finite
compositions and finite sums.  Finally,
\[
 \sum_{r\ge0}r^k\lambda^r x^r
 =\left(x\frac{d}{dx}\right)^k\frac{1}{1-\lambda x}
\]
is rational, proving the last assertion.
\end{proof}

\begin{lemma}[The occupied coefficient is not an exponential polynomial]
\label{lem:beta-not-exponential}
For $0<q<1$, the sequence $\beta_r$ in \eqref{eq:beta} is not an
exponential polynomial.
\end{lemma}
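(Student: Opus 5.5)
The plan is to use the rationality criterion from Lemma~\ref{lem:finite-shift-exponential}. Any exponential polynomial has a rational ordinary generating function. So it suffices to show that
\[
 \sum_{r\ge0}\beta_r x^r
\]
is not rational. Equivalently, it suffices to rule out a representation $\beta_r=\sum_j p_j(r)\lambda_j^r$ directly.

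First write
\[
 \beta_r=\frac{1}{2(1-q)}\cdot\frac{1-q^{2}(q^2)^{r}}{r+1}.
\]
Its generating function can be computed in closed form. Since $\sum_{r\ge0}x^r/(r+1)=-x^{-1}\log(1-x)$, we get
\[
 \sum_{r\ge0}\beta_r x^r=\frac{1}{2(1-q)}\Bigl(-\frac{\log(1-x)}{x}+\frac{\log(1-q^2x)}{x}\Bigr).
\]
This function has a logarithmic branch point at $x=1$, coming from the first term. The second term is analytic near $x=1$ because $q^2<1$, so nothing cancels the branch point. A rational function is meromorphic on all of $\C$, so it cannot have a logarithmic singularity. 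Hence the generating function is not rational, and $\beta_r$ is not an exponential polynomial.

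For a purely algebraic argument, suppose $(r+1)\beta_r=\sum_j p_j(r)\lambda_j^r$. The left side equals $\frac{1}{2(1-q)}(1-q^2\cdot q^{2r})$, which is itself an exponential polynomial with frequencies $1$ and $q^2$. Exponential polynomials have unique representations, because the functions $r^k\lambda^r$ are linearly independent as sequences. We can therefore compare the two representations frequency by frequency. For the frequency $1$ this gives $(r+1)p_1(r)=\text{const}\neq0$. That is impossible for a polynomial $p_1$, since the left side is either zero or has degree at least one.

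The only step needing care is this linear independence (uniqueness) of the sequences $r^k\lambda^r$. I will cite it, or derive it from partial fractions of rational generating functions: the generating function of $r^k\lambda^r$ has a pole of exact order $k+1$ at $1/\lambda$. That also closes the analytic version cleanly. Note that the argument uses $0<q<1$ only to ensure $q^2\neq1$, so that the two frequencies are distinct and the constant in $(r+1)p_1(r)=\text{const}$ is nonzero.
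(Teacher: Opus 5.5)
Your proposal is correct and follows the paper's proof. You obtain the same closed form $\frac{1}{2(1-q)x}\log\frac{1-q^2x}{1-x}$ for the generating function and deduce non-rationality from the logarithmic singularity at $x=1$ via Lemma~\ref{lem:finite-shift-exponential}; your remark that $\log(1-q^2x)$ is analytic near $x=1$ makes explicit a point the paper leaves implicit, and your frequency-comparison argument (multiplying by $r+1$ and using linear independence of the sequences $r^k\lambda^r$) is a valid purely algebraic alternative that avoids analytic continuation.
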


\begin{proof}
For $|x|<1$,
\begin{equation}\label{eq:beta-gf}
 \sum_{r\ge0}\beta_rx^r
 =\frac{1}{2(1-q)x}
  \log\!\left(\frac{1-q^2x}{1-x}\right).
\end{equation}
Indeed this follows from
$\sum_{r\ge0}x^r/(r+1)=-\log(1-x)/x$ and its $q^2x$ analogue.
The right-hand side has a logarithmic singularity at $x=1$ and is not
rational.  Lemma~\ref{lem:finite-shift-exponential} therefore rules out an
exponential-polynomial representation.
\end{proof}

\begin{theorem}[No finite nonzero-shift realization]\label{thm:no-shift}
For $0<q<1$, no finite nonzero-shift local operator on the undeformed
polynomial coordinate--Clifford--Weyl algebra can have the complete flat
rank-one one-pair radial restriction.
\end{theorem}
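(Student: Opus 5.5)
The plan is a contradiction argument that combines Lemma~\ref{lem:finite-shift-exponential} with Lemma~\ref{lem:beta-not-exponential}. Assume a finite nonzero-shift local operator $T$ has the complete flat rank-one one-pair radial restriction. Then $T$ must reproduce, on every scalar occupancy branch, the coefficients prescribed by the radial calculus \eqref{eq:radial-D-definition}. The aim is to find one fixed input coefficient vector $v$ and one fixed output functional for which the first-axis coefficient of $T(z_1^r\bar z_1^r v)$ equals a fixed nonzero multiple of $\beta_r$, plus at most an exponential polynomial in $r$.

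The first step is to choose the branch. Take $N=n=1$ and the occupied branch $S=\{1\}$, so $k=1$. Proposition~\ref{prop:branch-rigidity} shows that the defect coefficient on a scalar input of bosonic radial degree $r$ contains
\[
 -\frac{c_{r+1}}{r+1}K_c(B_{\rm f})=-\beta_r K_c(B_{\rm f}).
\]
This arises through the opposite-Euler spectral factor $B_q^{(1)}(E_{\bar z})$ in \eqref{eq:chiS}. The other pieces, namely $\delta_m(q)$, $\kappa^+_c-1$ and the divided-power weights $c_r$, are $r$-independent or have the form $[r]_{q^2}$, and the latter is already an exponential polynomial. Fix a charge $c$ with $K_c\ne0$; for example $c=1$ gives $K_1=\frac1{2i}\beta_1\alpha_0=\frac1{2i}(B+i)$. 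Acting on the localized module, this is a unit, so we can pick an output functional on the $e_{\bar\vartheta,c}$ or $e_{2,c}$ slot that reads off a nonzero multiple of the $\beta_r$ coefficient.

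The second step extracts the sequence. Evaluate the radial operator, or the matrix element of the triangular transport singled out by the defect, on the input $H^r e$ of that branch. Restrict to the first coordinate axis, where $H=z_1\bar z_1$, and take the coefficient of $z_1^{r+a}\bar z_1^{r+b}$ with suitable fixed $a,b$ (one of the pair is $-1$ after one bosonic lowering). Exact restriction forces this coefficient to be
\[
 \lambda\,\beta_r+\sum_j p_j(r)\mu_j^r,\qquad \lambda\ne0 .
\]
The exponential-polynomial part collects the contributions from $c_r$, $[2m]_q$ and the constants. Since $T$ is finite nonzero-shift local, Lemma~\ref{lem:finite-shift-exponential} says the same coefficient is an exponential polynomial in $r$. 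Exponential polynomials form a vector space, so $\beta_r$ would then be an exponential polynomial, contradicting Lemma~\ref{lem:beta-not-exponential}.

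The main obstacle is the isolation step. We must make sure that the chosen output channel sees $\beta_r$ with a coefficient that is a nonzero constant, and not a combination in which $1/(r+1)$ cancels against something else. We must also check that the Cartan coefficients $K_c(B_{\rm f})$ and $\pi_c^\pm$, which are rational in $B_{\rm f}$, do not themselves introduce $r$-dependence. They do not: Definition~\ref{def:local-classes} allows Clifford--Weyl coefficient maps that are independent of the bosonic degree, and the Cartan data act only on the fermionic slot, which is held fixed. To secure the nonzero constant, I would work on the first-order triangular term $\mathfrak n^+\widehat\otimes\mathbb N^+$. On $H^re$, with $\mathfrak n^+$ contracting one $Z$, this term carries the factor $B_q^{(1)}$ evaluated at barred degree $r$, so the coefficient is $\beta_r$ times a fixed unit of $\mathcal A_s^{\min}$. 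As a fallback, I would compare two degrees using the scalar chain $c_r$, which is an exponential polynomial, and subtract it off.
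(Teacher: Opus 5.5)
Your argument is the paper's: exact restriction puts $\beta_r$ into a fixed occupied fermionic output component of the first-axis coefficient of the occupied scalar chain, and Lemma~\ref{lem:finite-shift-exponential} then contradicts Lemma~\ref{lem:beta-not-exponential}. The paper asserts the coefficient is exactly $\beta_r$, whereas you allow a nonzero multiple plus an exponential-polynomial remainder; this is a harmless strengthening, since such sequences form a vector space.

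Discard your proposed isolation through $\mathfrak n^+\widehat\otimes\mathbb N^+$ acting on $H^re$. First, $\mathfrak n^+$ annihilates $H^re$, because there is no holomorphic vector factor to contract. Second, the hypothesis constrains $T$ only through the radial operator $\mathfrak D_{k,q}^+$, not through matrix elements of $\mathbb U_q^+$. So the $\beta_r$ channel should be read directly off the radial restriction on the occupied branch, as the paper does.
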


\begin{proof}
The exact occupied scalar chain contains the component with coefficient
$\beta_r$ in \eqref{eq:beta}.  Under the standard coordinate inclusion, the
bosonic radial variable is a fixed nonzero multiple of the Hermitian Gram
variable.  Restricting to the first coordinate axis therefore turns its
$r$th power into a fixed nonzero multiple of $z_1^r\bar z_1^r$.  Extract the
fixed occupied fermionic output component.  If the extending operator were
finite nonzero-shift local, Lemma~\ref{lem:finite-shift-exponential} would
make this coefficient an exponential polynomial in $r$.  Exact radial
restriction forces it to equal $\beta_r$, contradicting
Lemma~\ref{lem:beta-not-exponential}.
\end{proof}

\begin{proposition}[Continuous-dilation form of the forced denominators]
\label{prop:continuous-dilation}
The reciprocal spectral factors responsible for the rank-one Capelli/Euler normalization
and the occupied radial coupling have exact continuous-dilation formulas.
If $P$ is homogeneous of holomorphic degree $d$, then
\begin{equation}\label{eq:euler-resolvent-dilation}
 (m+E_z)^{-1}P
 =\int_0^1t^{m-1}P(tz,\bar z)\,dt,
\end{equation}
and
\begin{equation}\label{eq:beta-dilation}
 \beta_r=\frac{1}{2(1-q)}\int_{q^2}^{1}t^r\,dt.
\end{equation}
Thus the preceding obstruction concerns \emph{finite} shift support, not
dilation calculus itself.
\end{proposition}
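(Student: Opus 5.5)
Both identities in Proposition~\ref{prop:continuous-dilation} are one-variable integral computations on homogeneous components, so the plan is to verify each directly.

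For \eqref{eq:euler-resolvent-dilation}, take $P$ homogeneous of holomorphic degree $d$ in the $z$-variables. The Euler operator $E_z=\sum_a z_a\partial_{z_a}$ (summed over the coordinates of the single label in the rank-one setting) then acts on $P$ by the scalar $d$. Hence $(m+E_z)^{-1}P=(m+d)^{-1}P$, and this is well defined because $m+d\ge m\ge1$. On the other side, homogeneity gives $P(tz,\bar z)=t^dP(z,\bar z)$. Therefore
\[
\int_0^1t^{m-1}P(tz,\bar z)\,dt=\Bigl(\int_0^1t^{m+d-1}\,dt\Bigr)P=\frac{1}{m+d}P .
\]
This integral converges since $m+d>0$. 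A general polynomial is a finite sum of such homogeneous components, so the identity extends to it by linearity.

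For \eqref{eq:beta-dilation}, start from \eqref{eq:beta} and compute
\[
\int_{q^2}^1t^r\,dt=\frac{1-q^{2r+2}}{r+1}.
\]
Multiplying by $\frac{1}{2(1-q)}$ gives $\beta_r$. Equivalently, $c_{r+1}/(r+1)=\frac{1}{2(1-q)}\cdot\frac{1-(q^2)^{r+1}}{r+1}$. For the remark that these are the forced denominators, I would also note the following link to Lemma~\ref{lem:beta-not-exponential}. Summing the dilation formula against $x^r$ gives
\[
\frac{1}{2(1-q)}\int_{q^2}^1\frac{dt}{1-tx},
\]
which is exactly the logarithm in \eqref{eq:beta-gf}. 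This explains the non-rationality as an averaging over a continuum of dilations rather than finitely many of them.

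No step here is a genuine obstacle. The only points needing care are that the Euler operator is the holomorphic one, which matches how the Capelli normalization $\mathcal C_z$ reduces in rank one to $m+E_z$, and that the integrals converge because the exponents are positive.
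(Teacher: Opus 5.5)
Your proposal is correct and matches the paper's proof: homogeneity reduces the first identity to $\int_0^1 t^{m+d-1}\,dt=(m+d)^{-1}$, and the second is the direct evaluation $\int_{q^2}^1 t^r\,dt=(1-q^{2r+2})/(r+1)$. Your generating-function check, $\frac{1}{2(1-q)}\int_{q^2}^1\frac{dt}{1-tx}$ against \eqref{eq:beta-gf}, is also correct, though the statement does not need it.
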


\begin{proof}
The first identity is the integral of $t^{m+d-1}$; the second is
$\int_{q^2}^1t^r\,dt=(1-q^{2r+2})/(r+1)$.
\end{proof}

These results do not rule out other notions of local $q$-superspace.  They
show that an exact realization of the present flat radial theory cannot be
obtained inside the two undeformed coordinate classes considered above.  A
braided or quantum coordinate algebra would require a different construction.

\section{Conclusion}\label{sec:conclusion}

In the stable range we have constructed a $q$-Hermitian coordinate calculus on the localized transported module for arbitrary finite bosonic label sets and finite fermionic support.  The flat radial deformation extends to general Clifford-valued coordinates while preserving the Hermitian square-zero relations and the mixed polarization algebra.  The proof has three main steps.

First, stable Howe separation converts the scalar coordinate algebra into a
unique Gram--harmonic normal form.  The radial divided-power weights therefore
extend canonically to all scalar polynomials.  The label--Capelli inverse then
normalizes the coordinate Clifford contractions, and the Capelli--PBW
factorization identifies them with the abstract label contractions on all
standard radial exterior words.  A finite Wick--Chevalley conjugation lifts
the scalar gauge to the complete Clifford PBW module.

Second, the finite-support fermionic seed complexes are contractible over the
minimal factorwise Cartan localization.  Coupling their nilpotent homotopies
to the normalized Capelli contractions produces one finite triangular
transport in each polarization.  The resulting coordinate families are
square-zero, mutually anticommute within each polarization, reproduce the
flat radial PBW calculus exactly, carry scalar boundary trace $[2m-2n]_q$, and have the
classical limit.

Third, the global Capelli--Grassmann relations make the mixed problem finite.
Different labels contribute commuting even triangular factors, so the
relative transport factorizes and every higher-filtration mixed term is a
finite subset product of labelwise defects.  This closes the full mixed
$+/-$ algebra.  The same relative map also shows that the classical complex
structure survives unchanged, whereas one common full transported
Clifford--Weyl product does not.

The localization is forced by the algebraic boundary relations.  The one-pair
problem requires the Cartan denominator in the natural polynomial first-order
seed class, and the all-charge two-polarization problem requires the full
shift-stable orbit set.  Nevertheless, in the natural range $m\ge n$ the
$q$-dependent orbit factors are nonresonant on the standard polynomial
oscillator.  Thus the deformation does not create new oscillator poles; any
remaining all-charge resonance comes from the classical Weyl shift orbit.

We finally note two limitations of the present construction.  It is not a fixed local
differential calculus on ordinary superspace, and Theorems~\ref{thm:no-differential}
and \ref{thm:no-shift} explain why two natural undeformed locality classes
cannot realize the exact radial theory.  Also, the exceptional range $m<2N$
is not part of the present theorem: there the stable normal form acquires
additional syzygies and should be treated by resolution-theoretic methods.
Analytic function theory, covariance under a possible quantum unitary/Spin structure, and integral formulas likewise belong to later work.

\appendix
\section{Rank-one right-normal mixed correction}\label{app:mixed-normal-order}

This appendix supplies the normal-ordering step omitted from the factorized argument in Section~\ref{sec:mixed}.  It is included because the local
mixed correction is one of the places where an Euler-shell shift can easily
be missed.  Throughout this appendix there is one bosonic label and one
fermionic pair.  Put
\[
 d^+:=\partial_Z,
 \qquad d^-:=\partial_{Z^\dagger},
 \qquad e^\pm:=\{d^\pm,\mathfrak n^\pm\},
\]
and retain the one-pair seed maps
$\mathbb Q^\pm,\mathbb M^\pm,\mathbb N^\pm$ of
Section~\ref{sec:fermionic}.  All seed spectral coefficients are written on
the right.

For a right-normal seed coefficient $Y(E,\bar E)$ let
$\tau_z^{-1}Y:=Y(E-1,\bar E)$ and
$\tau_{\bar z}^{-1}Y:=Y(E,\bar E-1)$.  Define
\begin{equation}\label{eq:appendix-Lambdas}
 \Lambda^{+|-}:=\{\mathfrak n^+,d^-\},
 \qquad
 \Lambda^{-|+}:=\{d^+,\mathfrak n^-\}.
\end{equation}
For $N=1$, Proposition~\ref{prop:spin-cartan} gives
\begin{equation}\label{eq:appendix-dLambda}
 [d^+,\Lambda^{+|-}]
 =\varepsilon_z d^+\Delta,
 \qquad
 \varepsilon_z:=(m+E+1)^{-1}-(m+E)^{-1}.
\end{equation}
Similarly, with
$\mathfrak s^-=(m+\bar E)\mathfrak n^-$,
\begin{equation}\label{eq:appendix-Xi}
 \Xi^-:=[d^+,e^-]
 =\varepsilon_{\bar z}\Delta(\mathfrak s^- -d^-),
 \qquad
 \varepsilon_{\bar z}:=(m+\bar E+1)^{-1}-(m+\bar E)^{-1}.
\end{equation}

\begin{lemma}[Right-normal mixed anticommutator rule]
\label{lem:appendix-right-normal-rule}
Let $X\widehat\otimes Y(E,\bar E)$ be an odd right-normal operator term.
If $X$ is odd and $Y$ is even, then
\begin{align}
 \{\mathbb A^+,X\widehat\otimes Y\}
 &=\{d^+,X\}\widehat\otimes Y
   +Xd^+\widehat\otimes(\tau_z^{-1}Y-Y)
   +X\widehat\otimes[Y,\mathbb Q^+].
 \label{eq:appendix-rule-odd-even}
\end{align}
If $X$ is even and $Y$ is odd, then
\begin{align}
 \{\mathbb A^+,X\widehat\otimes Y\}
 &=[d^+,X]\widehat\otimes Y
   +Xd^+\widehat\otimes(Y-\tau_z^{-1}Y)
   +X\widehat\otimes\{\mathbb Q^+,Y\}.
 \label{eq:appendix-rule-even-odd}
\end{align}
\end{lemma}

\begin{proof}
The graded tensor signs give the first and third terms in each identity.  For
the middle term one uses
$Y(E,\bar E)d^+=d^+(\tau_z^{-1}Y)(E,\bar E)$ before restoring right-normal
order.  The fermionic seed preserves the bosonic Euler degrees, so no further
shift occurs.
\end{proof}

Abbreviate the five seed combinations occurring in
\eqref{eq:local-Omega} by
\begin{align}
 \mathbb S&:=\mathbb N^+-\tau_{\bar z}^{-1}\mathbb N^+,
 &\mathbb R&:=\{\mathbb N^+,\mathbb Q^-\},\notag\\
 \mathbb T&:=(\tau_z^{-1}\mathbb M^-)\mathbb N^+
             -(\tau_{\bar z}^{-1}\mathbb N^+)\mathbb M^-,
 &\mathbb U&:=(\tau_{\bar z}^{-2}\mathbb N^+)\mathbb N^-,\notag\\
 &&\mathbb V&:=(\tau_z^{-1}\mathbb N^-)\mathbb N^+.
 \label{eq:appendix-STURV}
\end{align}
Every object on the right is an explicit four-slot Cartan map because
$\mathbb Q^\pm,\mathbb M^\pm,\mathbb N^\pm$ were given in
Section~\ref{sec:fermionic}.

\begin{theorem}[Complete rank-one one-pair mixed correction]
\label{thm:appendix-complete-rank-one}
On every rank-one polynomial--localized one-pair block,
\begin{align}
 \{\mathbb A^+,\boldsymbol\Omega_q^{-|+}\}
={}&\Lambda^{-|+}\widehat\otimes\mathbb M^-
 +\mathfrak n^-d^+\widehat\otimes
   (\tau_z^{-1}\mathbb M^- -\mathbb M^-)
 +\mathfrak n^-\widehat\otimes[\mathbb M^-,\mathbb Q^+]
 \notag\\
&+\Xi^-\widehat\otimes\mathbb N^-
 +e^-d^+\widehat\otimes
   (\mathbb N^- -\tau_z^{-1}\mathbb N^-)
 +e^-\widehat\otimes\{\mathbb Q^+,\mathbb N^-\}
 \notag\\
&-[d^+,\Lambda^{+|-}]\widehat\otimes\mathbb N^+
 +\Lambda^{+|-}\widehat\otimes\mathbb M^+
 \notag\\
&+(e^+d^- -\mathfrak n^+\Delta)\widehat\otimes\mathbb S
 +(\mathfrak n^+d^-)\widehat\otimes
   \{\mathbb Q^+,\mathbb S\}
 \notag\\
&+e^+\widehat\otimes\mathbb R
 +\mathfrak n^+\widehat\otimes[\mathbb R,\mathbb Q^+]
 \notag\\
&+(e^+\mathfrak n^- -\mathfrak n^+\Lambda^{-|+})
   \widehat\otimes\mathbb T
 \notag\\
&+(\mathfrak n^+\mathfrak n^-)d^+\widehat\otimes
   (\mathbb T-\tau_z^{-1}\mathbb T)
 +(\mathfrak n^+\mathfrak n^-)\widehat\otimes
   \{\mathbb Q^+,\mathbb T\}
 \notag\\
&+(e^+e^- -\mathfrak n^+\Xi^-)
   \widehat\otimes\mathbb U
 +(\mathfrak n^+e^-)\widehat\otimes[\mathbb U,\mathbb Q^+]
 \notag\\
&+(\Xi^-\mathfrak n^+ +e^-e^+)
   \widehat\otimes\mathbb V
 \notag\\
&+(e^-\mathfrak n^+)d^+\widehat\otimes
   (\tau_z^{-1}\mathbb V-\mathbb V)
 +(e^-\mathfrak n^+)\widehat\otimes[\mathbb V,\mathbb Q^+].
 \label{eq:appendix-full-mixed}
\end{align}
Thus the full one-pair mixed anticommutator is obtained by adding
$\mathbb L^{\rm seed}$ and applying the outer transport in
\eqref{eq:mixed-relative}.  Every seed bracket in
\eqref{eq:appendix-full-mixed} is an explicit composition of the four-slot
maps of Section~\ref{sec:fermionic}; no further seed matrix or BCH remainder
is implicit.
\end{theorem}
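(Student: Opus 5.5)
The plan is to apply the odd superderivation $\operatorname{ad}^{\rm s}_{\mathbb A^+}=\{\mathbb A^+,\cdot\}_{\rm s}$ term by term to the explicit rank-one relative connection \eqref{eq:local-Omega} of Theorem~\ref{thm:local-Omega}. That formula writes $\boldsymbol\Omega_q^{-|+}$ as a finite sum of right-normal products $X\widehat\otimes Y(E,\bar E)$. The bosonic parts $X$ are drawn from $\mathfrak n^\pm$, $d^-$, $e^-$, $\boldsymbol\Lambda^{+|-}$ and their products. The seed parts $Y$ are $\mathbb M^-$, $\mathbb N^\pm$, or one of the five combinations $\mathbb S,\mathbb R,\mathbb T,\mathbb U,\mathbb V$ of \eqref{eq:appendix-STURV}. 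Since $\operatorname{ad}^{\rm s}_{\mathbb A^+}$ is linear, it suffices to compute one term at a time. For each term we apply Lemma~\ref{lem:appendix-right-normal-rule}, which gives three contributions:
\begin{itemize}
\item a bosonic bracket of $d^+$ with $X$;
\item an Euler-shift correction $Xd^+\widehat\otimes(\pm(\tau_z^{-1}Y-Y))$;
\item a seed bracket of $\mathbb Q^+$ with $Y$.
\end{itemize}
Which case of the lemma applies depends on the parities: $\mathbb M^\pm$, $\mathbb S$ and $\mathbb R$ paired with odd $X$; $\mathbb N^\pm$, $\mathbb T$ and $\mathbb U,\mathbb V$ paired according to the bosonic factor. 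The parities are read off from \eqref{eq:local-Omega}, where every term is odd in total.

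The terms are handled in the order they appear.

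First, $\mathbb K^-=\mathfrak n^-\widehat\otimes\mathbb M^-+e^-\widehat\otimes\mathbb N^-$. This produces the first two lines of \eqref{eq:appendix-full-mixed}, using $\{d^+,\mathfrak n^-\}=\Lambda^{-|+}$ and $[d^+,e^-]=\Xi^-$ from \eqref{eq:appendix-Xi}.

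Second, $-\boldsymbol\Lambda^{+|-}\widehat\otimes\mathbb N^+$. This gives $-[d^+,\Lambda^{+|-}]\widehat\otimes\mathbb N^+$ together with $\Lambda^{+|-}\widehat\otimes\{\mathbb Q^+,\mathbb N^+\}$, which equals $\Lambda^{+|-}\widehat\otimes(-\mathbb M^+)$ by Proposition~\ref{prop:finite-seed}. The sign is fixed once the overall minus is included. The $\tau_z$ shift drops out here, because $\mathbb N^+$ depends only on $\bar E$.

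Third, the products $\mathfrak n^+d^-$, $\mathfrak n^+$, $\mathfrak n^+\mathfrak n^-$, $\mathfrak n^+e^-$ and $e^-\mathfrak n^+$. For these we expand the bosonic bracket by the graded Leibniz rule, using:
\begin{itemize}
\item $\{d^+,\mathfrak n^+\}=e^+$;
\item $\{d^+,d^-\}=\Delta$ at the bosonic level (we read $\Delta$ as the classical mixed Laplacian for $N=1$);
\item $\{d^+,\mathfrak n^-\}=\Lambda^{-|+}$;
\item $[d^+,e^-]=\Xi^-$.
\end{itemize}
For example, $[d^+,\mathfrak n^+d^-]=e^+d^--\mathfrak n^+\Delta$ and $\{d^+,\mathfrak n^+\mathfrak n^-\}$-type brackets give $e^+\mathfrak n^--\mathfrak n^+\Lambda^{-|+}$. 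Wherever the seed factor depends only on the barred degree ($\mathbb S$, $\mathbb R$, $\mathbb U$), the Euler-shift term vanishes. This is why those lines contain no $\tau_z$ difference.

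The main obstacle will be bookkeeping rather than any conceptual step. Two things need care:
\begin{itemize}
\item The signs of the graded tensor product. The bosonic parity of $X$ flips the sign with which $\mathbb Q^+$ passes $X$, and this decides between a commutator $[Y,\mathbb Q^+]$ and an anticommutator $\{\mathbb Q^+,Y\}$.
\item The precise right-normal placement of the $\tau_z$ shifts. In particular we must confirm that $d^+$ commutes with $\bar E$-dependence, and that the shifts already present inside $\mathbb T,\mathbb V$ combine into the single differences $\mathbb T-\tau_z^{-1}\mathbb T$ and $\tau_z^{-1}\mathbb V-\mathbb V$.
\end{itemize}
To settle the signs I would first fix each parity explicitly and check two representative lines, those for $\mathbb K^-$ and for $\mathbb T$, against direct expansion on a test vector $Z^aZ^{\dagger b}H^r e$. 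After that, the remaining lines follow by the same rule.

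Finally, completeness has two parts. No BCH remainder remains, by Theorem~\ref{thm:local-Omega}. Every seed bracket is a finite composition of the four-slot maps, by Section~\ref{sec:fermionic}. Adding $\mathbb L^{\rm seed}$ and invoking Theorem~\ref{thm:relative} then gives the stated full mixed anticommutator.
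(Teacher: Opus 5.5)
Your proposal follows the paper's proof: you apply Lemma~\ref{lem:appendix-right-normal-rule} to each of the eight terms of \eqref{eq:local-Omega}, reduce the bosonic brackets with $\{d^+,\mathfrak n^+\}=e^+$, $\{d^+,d^-\}=\Delta$, $\{d^+,\mathfrak n^-\}=\Lambda^{-|+}$ and $[d^+,e^-]=\Xi^-$, and use $\{\mathbb Q^+,\mathbb N^+\}=-\mathbb M^+$ to obtain the $\Lambda^{+|-}\widehat\otimes\mathbb M^+$ term. Two bookkeeping corrections: $\mathbb S$ is odd and sits on the even factor $\mathfrak n^+d^-$, so the even--odd rule applies there (your bracket $[d^+,\mathfrak n^+d^-]$ already reflects this); and the $\mathbb U$ line lacks a $\tau_z$-difference not because every factor is $E$-independent ($\mathbb N^-$ is not), but because the four-slot support makes $\mathbb N^+$ annihilate the output $e_2$ of the $E$-dependent arrow $e_\vartheta\to e_2$ of $\mathbb N^-$, so only the $E$-independent empty-branch coefficient survives in $\mathbb U$.
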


\begin{proof}
Apply Lemma~\ref{lem:appendix-right-normal-rule} term by term to the eight
terms of \eqref{eq:local-Omega}.  The bosonic commutators reduce to
\begin{align*}
 [d^+,\mathfrak n^+d^-]&=e^+d^- -\mathfrak n^+\Delta,\\
 [d^+,\mathfrak n^+\mathfrak n^-]
 &=e^+\mathfrak n^- -\mathfrak n^+\Lambda^{-|+},\\
 \{d^+,\mathfrak n^+e^-\}
 &=e^+e^- -\mathfrak n^+\Xi^-,\\
 \{d^+,e^-\mathfrak n^+\}
 &=\Xi^-\mathfrak n^+ +e^-e^+.
\end{align*}
The relation
$\{\mathbb Q^+,\mathbb N^+\}=-\mathbb M^+$ produces the positive
$\Lambda^{+|-}\widehat\otimes\mathbb M^+$ term.  The only additional
holomorphic shell shifts are those displayed explicitly in
\eqref{eq:appendix-full-mixed}; all other seed coefficients are unchanged by
$d^+$.  Equations \eqref{eq:appendix-dLambda}--\eqref{eq:appendix-Xi} control
the two inverse-Capelli commutators.  Collecting the resulting terms gives
\eqref{eq:appendix-full-mixed}.
\end{proof}

The formula also displays the required range of Euler shifts.  The one-pair seed
coefficients occurring in \eqref{eq:local-Omega} and
\eqref{eq:appendix-full-mixed} use only
\[
 E,E-1,E-2,
 \qquad
 \bar E,\bar E-1,\bar E-2.
\]
Hence the rank-one normal ordering terminates after two adjacent shells in
each polarization.  At $q=1$ every defect coefficient vanishes and the
correction reduces to zero, leaving the classical mixed operator.

\section*{Acknowledgements}
\enlargethispage{2\baselineskip}
This work was co-funded by the Czech Science Foundation (GA\v{C}R), Grant No.~25-16847S.  It was also supported by the University of Ostrava, Grant No.~SGS05/P\v{R}F/2026.

\section*{Statements and Declarations}
\noindent\textbf{Competing interests.}
The authors declare that they have no competing financial or non-financial interests that are directly or indirectly related to this work.

\noindent\textbf{Data availability.}
No datasets were generated or analysed during the current study.

\printbibliography
%%
%\bibliographystyle{amsplain}
%\bibliography{references}

\end{document}